\newtheorem{thm}{Theorem}[section]
\newtheorem{exam}[thm]{Example.\nopagebreak}
\def\eeq{\end{equation}} 
\def\lbeq#1{\begin{equation} \label{#1}} 
\def\eps{\varepsilon} 
\def\D{\displaystyle}				% display style
\def\ol{\overline}
\def\ul{\underline}
\def\fct#1{\mathop{\rm #1}}	% e.g.,  \fct{tr}
\def\argmax{\fct{argmax}}
\def\argmin{\fct{argmin}}
\def\half{\frac{1}{2}}
\def\wh{\widehat}
\def\bary{\begin{array}}
\def\eary{\end{array}}
\def\st{\fct{s.t.~}}
\def\x{\mathbf{x}}
\def\ul{\underline}
\def\ol{\overline}
\def\<{\langle}
\def\>{\rangle}
\def\Rz{\mathbb{R}}
\def\gzit#1{(\ref{#1})}
\def\iif{~~~\mbox{if }}
\def\half{\frac{1}{2}} 
\title{An optimal subgradient algorithm for large-scale bound-constrained convex optimization
       }
\author{
Masoud Ahookhosh\thanks{Faculty of Mathematics, University of Vienna,
Oskar-Morgenstern-Platz 1, 1090 Vienna, Austria. ({\tt masoud.ahookhosh@univie.ac.at})}
\and Arnold Neumaier\thanks{Faculty of Mathematics, University of Vienna,
Oskar-Morgenstern-Platz 1, 1090 Vienna, Austria. ({\tt Arnold.Neumaier@univie.ac.at})}
        }
\begin{document}
\maketitle
\slugger{mms}{xxxx}{xx}{x}{x--x}%slugger should be set to mms, siap, sicomp, sicon, sidma, sima, simax, sinum, siopt, sisc, or sirev

\begin{abstract}
This paper shows that the OSGA algorithm -- which uses first-order information to solve convex optimization problems with optimal complexity -- can be used to efficiently solve arbitrary bound-constrained convex optimization problems. This is done by constructing an explicit method as well as an inexact scheme for solving the bound-constrained rational subproblem required by OSGA. This leads to an efficient implementation of OSGA on large-scale problems in applications arising signal and image 
processing, machine learning and statistics. Numerical experiments demonstrate the promising performance of OSGA on such problems. ions to show the efficiency of the proposed scheme. A software package implementing OSGA for bound-constrained convex problems is available.
\end{abstract}

\begin{keywords} bound-constrained convex optimization, nonsmooth optimization, first-order black-box oracle, subgradient methods, optimal complexity, high-dimensional data \end{keywords}

\begin{AMS}
90C25 \and 90C60 \and 90C06 \and 65K05  
\end{AMS}

\pagestyle{myheadings}
\thispagestyle{plain}
\markboth{\sc M. Ahookhosh and A. Neumaier}{\sc OSGA for bound-constrained convex optimization}

% ########################################################
% ########################################################
\section{Introduction}
Let $\mathcal{V}$ be a finite-dimensional real vector space and $\mathcal{V}^*$ its dual space. In this paper we consider the bound-constrained convex minimization problem
\begin{equation}\label{e.gfun}
\begin{array}{ll}
\min & f(\mathcal{A}x)\\
\st  & x \in \x,
\end{array}
\end{equation}
where $f: \x \rightarrow \Rz$ is a -- smooth or nonsmooth -- convex function, $\mathcal{A}: \mathbb{R}^n \rightarrow \mathbb{R}^m$ is a linear operator, and $\x = [\ul{x}, \ol{x}]$ is an axiparallel box in $\mathcal{V}$ in which $\ul{x}$ and $\ol{x}$ are the vectors of lower and upper bounds on the components of $x$, respectively. (Lower bounds are allowed 
to take the value $-\infty$, and upper bounds the value $+\infty$.)

Throughout the paper, $\<g,x\>$ denotes the value of $g \in \mathcal{V}^*$ at $x \in \mathcal{V}$. A subgradient of the objective function $f$ at $x$ is a vector 
$g(x)\in \mathcal{V}^*$ satisfying
\[
f(z) \geq f(x) + \< g(x),z - x \>,
\]
for all $z \in \mathcal{V}$. It is assumed that the set of optimal solutions of (\ref{e.gfun}) is nonempty and the first-order information about the objective function (i.e., 
for any $x\in\x$, the function value $f(x)$ and some subgradient $g(x)$ at $x$) are available by a first-order black-box oracle.\\
%\vspace{3mm}
%%%%%%%%%%%%%%%%%%%%%%%%%%%%%%%
\textbf{Motivation \& history.} 
Bound-constrained optimization in general is an important problem appearing in many fields of science and engineering, where the parameters describing physical quantities are constrained to be in a given range. Furthermore, it plays a prominent role in the development 
of general constrained optimization methods since many methods reduce the solution of the general problem to the solution of a sequence of bound-constrained problems. 

There are  lots of bound-constrained algorithms and solvers for smooth and nonsmooth optimization; here, we mention only those related to our study. {\sc Lin \& Mor\'e} in \cite{LinM} and {\sc Kim} et al. in \cite{KimSD} proposed Newton and quasi-Newton methods for solving bound-constrained optimization. In 1995, {\sc Byrd} et al. \cite{ByrLNZ} proposed a limited memory algorithm called LBFGS-B for general smooth nonlinear bound-constrained optimization. {\sc Branch} et al. in \cite{BraCL} proposed a trust-region method to solve this problem. {\sc Neumaier \& Azmi} \cite{NeuA} solved this problem by a limited memory algorithm. The smooth bound-constrained optimization problem was also solved by {\sc Birgin} et al. in \cite{BirMR} and {\sc Hager \& Zhang} in \cite{HagZ1,HagZ2} using nonmonotone spectral projected gradient methods, active set strategy and affine scaling scheme, respectively. Some limited memory bundle methods for solving bound-constrained nonsmooth problems were
proposed by {\sc Karmitsa \& M\"akel\"a} \cite{KarM1, KarM2}. 

In recent years convex optimization has received much attention 
because it arises in many applications and is suitable for solving problems
involving high-dimensional data. The particular case of bound-constrained 
convex optimization involving a smooth or nonsmooth objective function 
also appears in a variety of applications, of which we mention the following:
\\\\
%\vspace{3mm}
%%%%%%%%%%%%%%%%%%%%%%%%%%%%%%%%%%%%%%%%%%%%%%%%
{\bf Bound-constrained linear inverse problems.} 
Given $A, W \in \mathbb{R}^{m \times n}$, $b \in \mathbb{R}^m$ and 
$\lambda \in \mathbb{R}$, for $m \geq n$, the bound-constrained least-squares problem is given by
\begin{equation}\label{e.bcls}
\begin{array}{ll}
\min  &~~ \D f(x) := \frac{1}{2} \| Ax - b \|_2^2 + \lambda \varphi(x)\\
\st  &~~ x \in \x,
\end{array}
\end{equation}
and the bound-constrained $l_1$ problem is given by
\begin{equation}\label{e.bcl1}
\begin{array}{ll}
\min  &~~ \D f(x) := \| Ax - b \|_1 + \lambda \varphi(x)\\
\st  &~~ x \in \x,
\end{array}
\end{equation}
where $\x=[\underline{x}, \overline{x}]$ is a box and $\varphi$ is a 
smooth or nonsmooth regularizer, often a weighted power of a norm; 
see Section \ref{s.num} for examples.

The problems (\ref{e.bcls}) and (\ref{e.bcl1}) are commonly arising in the context of control and inverse problems, especially for some imaging problems like denoising, deblurring and inpainting.  {\sc Morini} et al. \cite{MorPC} formulated the bound-constrained least-squares 
problem (\ref{e.bcls}) as a nonlinear system of equations and proposed an iterative method based on a reduced Newton's method. Recently, {\sc Zhang \& Morini} 
\cite{ZhaM} used alternating direction methods to solve these problems. More recently, {\sc Chan} et al. in \cite{ChaTY1}, {\sc Bo?} et al. in \cite{BotCH}, 
and {\sc Bo? \& Hendrich} in \cite{BotH} proposed alternating direction methods, primal-dual splitting methods, and a Douglas-Rachford primal-dual 
method, respectively, to solve both (\ref{e.bcls}) and (\ref{e.bcl1}) for some applications.\\
%\vspace{3mm}
%%%%%%%%%%%%%%
{\bf Content.} 
In this paper we show that the optimal subgradient algorithm OSGA proposed by {\sc Neumaier} in  
\cite{NeuO} can be used for solving bound-constrained problems of the 
form (\ref{e.gfun}). In order to run
OSGA, one needs to solve a rational auxiliary subproblem. We here investigate efficient schemes for solving this subproblem in the presence of bounds on its
variables. To this end, we show that the solution of the subproblem has a
one-dimensional piecewise linear representation and that it may be computed by solving a sequence 
of one-dimensional piecewise rational optimization problems.
We also give an iterative scheme that 
can solve OSGA's subproblem approximately by solving a 
one-dimensional nonlinear equation. We give numerical 
results demonstrating 
the performance of OSGA on some problems from applications. More specifically, 
in the next section, we investigate properties of the solution of the subproblem 
(\ref{e.sub}) that lead to two algorithms for solving it efficiently. 
In Section 3, we report numerical results with artificial and
real data, where to solve large-scale imaging problems we use
the inexact scheme to find the solution of OSGA's subproblem.  
Finally, Section 4 gives some conclusions.

% ########################################################
% ########################################################
\section{Overview of OSGA} 
OSGA (see Algorithm 1) is an optimal subgradient algorithm proposed by {\sc Neumaier} 
in \cite{NeuO} that constructs -- for a problem \gzit{e.gfun} with 
arbitrary nonempty, closed and convex domain, not necessarily a box 
-- a sequence of iterates whose function values converge with the optimal
complexity to the minimum of \gzit{e.gfun}. Furthermore, OSGA requires
no information regarding global parameters like Lipschitz constants of function
values and gradients.

Apart from first-order information at the iterates, OSGA requires an efficient routine for finding a maximizer $\wh x=U(\gamma,h)$ and the optimal objective value $E(\gamma,h)$ of an auxiliary problem of the form
\lbeq{e.sub}
\begin{array}{ll}
\sup & E_{\gamma,h}(x)\\
\st  &  x \in \x,
\end{array}
\eeq
where it is known that the supremum is positive. The function $E_{\gamma,h}: \x \rightarrow \Rz$ is defined by
\begin{equation}\label{e.ex}
 E_{\gamma,h}(x):= -\frac{\gamma+\<h,x\>}{Q(x)}
\end{equation}
with $\gamma\in\Rz$, $h\in \mathcal{V}^*$, and 
$Q: \x \to \Rz $ is a prox-function, i.e., a strongly 
convex function satisfying $ \inf_{x\in \x} Q(x) >0$ and
\lbeq{e.strc}
Q(z)\ge Q(x)+\<g_Q(x),z-x\>+\frac{\sigma}{2}\|z-x\|^2, 
\eeq
where the convexity parameter is $\sigma = 1$. In particular, 
with $Q_0>0$, if $x \in \mathbb{R}^n$, we may take
\lbeq{e.prox}
Q(x):= Q_0 + \half \|x-x^0\|_2^2 \iif x \in \mathbb{R}^n,
\eeq
where $\|x\|_2 = \sqrt{\sum_i x_i^2}$ is the Euclidean norm, and 
\lbeq{e.prox1}
Q(x):= Q_0 + \half \|x-x^0\|_F^2 \iif x \in \mathbb{R}^{m \times n},
\eeq
where $\|x\|_F = \sqrt{\sum_{i,k} x_{ik}^2}$ is the Frobenius norm.

In \cite{Aho, NeuO}, the unconstrained version (\ref{e.sub}) with the 
prox-function (\ref{e.prox}) or (\ref{e.prox1}) is solved in a closed form.
Numerical experiments and comparisons with some state-of-the-art solvers \cite{Aho, AhoN3,AhoN4} show the promising behaviour of OSGA for solving practical unconstrained problems. A version of OSGA that can solves structured nonsmooth problems with the complexity $O(\varepsilon^{-1/2})$ discussed in \cite{AhoN2}.
In \cite{AhoN1}, it is shown that the OSGA subproblem can be solved for many simple domains.
In this paper we show that for the above choices of $Q(x)$ and an
arbitrary box $\x$, the subproblem \gzit{e.sub} can be solved 
efficiently. It follows that OSGA is applicable to solve 
bound-constrained convex problems as well. 

It is shown in {\sc Neumaier} \cite{NeuO} that OSGA satisfies the 
optimal complexity bounds $O\left(\varepsilon^{-2} \right)$ for 
Lipschitz continuous nonsmooth convex functions and 
$O\left(\varepsilon^{-1/2} \right)$ for smooth convex functions 
with Lipschitz continuous gradients; cf.  
{\sc Nemirovsky \& Yudin} \cite{NemY} and {\sc Nesterov} \cite{NesB, NesS}. Since the underlying problem 
(\ref{e.gfun}) is a special case of the problem considered in 
\cite{NeuO}, the complexity of OSGA remains valid for (\ref{e.gfun}).

As discussed in \cite{NeuO}, OSGA uses the following scheme for 
updating the given parameters $\alpha$, $h$, $\gamma$, $\eta$,
and $u$:\\

%%%%%%%%%%%%%%%%%%%%%%%%%%%%%%%%%%   
\begin{algorithm}[h] \label{a.par}
\DontPrintSemicolon % Some LaTeX compilers require you to use \dontprintsemicolon instead
\KwIn{ $\delta$,~ $\alpha_{\max}\in{]0,1[}$,~ $0<\kappa'\le\kappa$, $\alpha$, $\eta$, $\bar{h}$, $\bar{\gamma}$, $\bar{\eta}$, $\bar{u}$;}
\KwOut{$\alpha$,~ $h$,~ $\gamma$,~ $\eta$,~ $u$;}
\Begin{
    $R \gets \left(\eta-\ol \eta)/(\delta\alpha \eta \right)$;\;
    \eIf{$R<1$} {
        $h \gets \ol h$;\;
    }{
        $\ol\alpha \leftarrow \min(\alpha e^{\kappa' (R-1)},\alpha_{\max})$;
    }
    $\alpha \gets \ol \alpha$;\;
    \If{$\ol \eta<\eta$}{
        $h \gets \ol h$;~ $\gamma \gets \ol \gamma$;~ $\eta \gets \ol \eta$;~ $u \gets \ol u$;\;
    }
}
\caption{ {\bf PUS} (parameters updating scheme)}
\label{algo:max}
\end{algorithm}

\vspace{3mm}
%%%%%%%%%%%%%%%%%%%%%%%%%%%%%%%%%%%
\begin{algorithm}[H] \label{a.osga}
\DontPrintSemicolon % Some LaTeX compilers require you to use \dontprintsemicolon instead
\KwIn{global parameters: $\delta, \alpha_{\max}\in{]0,1[}$,~ $0<\kappa'\le\kappa$;~ local parameters: $x_0$,~$\mu \geq 0$,~ $f_{\mathrm{target}}$;}
\KwOut{$x_b$,~ $f_{x_b}$;}
\Begin{
    choose an initial best point $x_b$;\;
    compute $f_{x_b}$ and $g_{x_b}$;\;
    \eIf{$f_{x_b} \leq f_{\mathrm{target}}$} {
        stop;\;
    }{
        $h = g_{x_b}-\mu g_Q(x_b)$;~ $\gamma = f_{x_b}-\mu Q(x_b)-\<h,x_b\>$;\;
        $\gamma_b = \gamma - f_{x_b}$;~ $u = U(\gamma_b,h)$;~ $\eta = 
        E(\gamma_b,h)-\mu$;\;
    }
    $\alpha \gets \alpha_{\max}$;\\
    \While {stopping criteria do not hold}{
        $x = x_b+\alpha(u-x_b)$; compute $f_x$ and $g_x$;\;
        $g = g_x-\mu g_Q(x)$;~ $\ol h = h+\alpha(g-h)$;\;
        $\ol\gamma = \gamma+\alpha(f_x - \mu Q(x)-\<g,x\>-\gamma)$;\;
        $x_b' = \argmin_{z \in \{x_b,x\}} f(z, v_z)$;~ $f_{x_b'} = 
        \min \{f_{x_b}, f_x\}$;\;
        $\gamma_b' = \ol\gamma-f_{x_b'}$;~ $u' = U(\gamma_b',\ol h)$;\; 
        $x' = x_b+\alpha(u'-x_b)$; compute $f_{x'}$;\;
        choose $\ol x_b$ in such a way that $f_{\ol x_b}\le \min\{f_{x_b'},f_{x'} \}$;\;
        $\ol\gamma_b = \ol\gamma - f_{\ol x_b}$;~ $\ol u = U(\ol \gamma_b,\ol h)$;~
        $\ol \eta = E(\ol\gamma_b,\ol h)-\mu$; ~$x_b = \ol x_b$; ~ $f_{x_b} = f_{\ol x_b}$;\;
        \eIf {$f_{x_b} \leq f_{\mathrm{target}}$}{
            stop;\;
        }{
            update the parameters $\alpha$, $h$, $\gamma$, $\eta$ and $u$;
        }
    }
}
\caption{ {\bf OSGA} (optimal subgradient algorithm)}
\end{algorithm}

% ########################################################
% ########################################################
\section{ Solving OSGA's rational subproblem (\ref{e.sub})}
In this section we investigate the solution of the 
bound-constrained subproblem (\ref{e.sub}) and give two iterative schemes, 
where the first one solves (\ref{e.sub}) exactly and the 
second one solves it approximately.

% ########################################################
\subsection{Explicit solution of OSGA's rational subproblem (\ref{e.sub})}
In this subsection we describe an explicit solution of the 
bound-constrained subproblem (\ref{e.sub}). 

Without loss of generality, we here consider $\mathcal{V} = \mathbb{R}^n$. It is not hard to adapt the results to $\mathcal{V} = \mathbb{R}^{n \times n}$ and other spaces.
The method is related to one used in several earlier papers. In 1980, {\sc Helgason} et al.  \cite{HelKL} characterized the solution of singly constrained quadratic problem with bound constraints. Later, {\sc Paradolas \& Kovoor}  \cite{ParK} developed an $O(n)$ algorithm for this problem using binary search to solve the associated
Kuhn-Tucker system. This problem was also solved by 
{\sc Dai \& Fletcher} \cite{DaiF} using a projected gradient method. 
{\sc Zhang} et al.  \cite{ZhaSV} solved the linear support vector machine problem by a cutting plane method employing a similar technique.

In the papers mentioned, the key is showing that the problem 
can be reduced to a piecewise linear problem in a single dimension. To apply this idea to the present problem, we prove that (\ref{e.sub}) is equivalent to an one-dimensional minimization problem and then develop a procedure to calculate its minimizer. We write
\begin{equation}\label{e.sol}
x(\lambda):=\sup \{ \ul{x},\inf\{x^0-\lambda h,\ol{x}\} \}
\end{equation} 
for the projection of $x^0-\lambda h$ to the box $\x$.

%%%%%%%%%%%%%%%%%%%%%%%%%%%%%%
\begin{proposition}\label{p.1}
For $h \neq 0$, the maximum of the subproblem (\ref{e.sub}) is attained at
$\wh x:=x(\lambda)$, where $\lambda>0$ or $\lambda = + \infty$ is the inverse of the value of the maximum.
\end{proposition}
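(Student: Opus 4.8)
The plan is to solve the constrained maximization $\sup_{x\in\x} E_{\gamma,h}(x)$ by converting it into a stationarity condition and then exploiting the special structure of the prox-function. Since $E_{\gamma,h}(x) = -(\gamma+\langle h,x\rangle)/Q(x)$ and the supremum is known to be positive, at a maximizer $\wh x$ with value $e:=E_{\gamma,h}(\wh x)>0$ we have $-(\gamma+\langle h,\wh x\rangle) = e\,Q(\wh x)$. First I would reformulate: maximizing $-(\gamma+\langle h,x\rangle)/Q(x)$ over the box is equivalent to finding the largest $e>0$ such that $-(\gamma+\langle h,x\rangle) - e\,Q(x) \le 0$ for all $x\in\x$, i.e. such that $\min_{x\in\x}\big(\gamma+\langle h,x\rangle + e\,Q(x)\big) = 0$. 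For each fixed $e>0$ the inner problem is strongly convex (since $Q$ has convexity parameter $\sigma=1$ by \gzit{e.strc}) and separable over the box coordinates when $Q$ is the quadratic \gzit{e.prox}, so its minimizer is the projection of $x^0 - e^{-1}h$ onto $\x$; setting $\lambda := 1/e$ this is exactly $x(\lambda)$ from \gzit{e.sol}.

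Next I would verify the fixed-point/consistency relation. Write $\phi(e) := \min_{x\in\x}\big(\gamma+\langle h,x\rangle+e\,Q(x)\big) = \gamma + \langle h, x(1/e)\rangle + e\,Q(x(1/e))$. By the envelope theorem (or directly, since the minimizer is unique), $\phi$ is concave in $e$ with $\phi'(e) = Q(x(1/e)) > 0$, and one checks $\phi(e)\to -\infty$ is impossible while for small $e$ the value is positive; more carefully, positivity of the supremum of $E_{\gamma,h}$ gives $\phi(e)<0$ for $e$ slightly below the optimal value and $\phi(e)>0$ above it, so there is a unique root $e^\star>0$ (or the root is pushed to $e^\star = 0$, i.e. $\lambda=+\infty$, when $\gamma+\langle h,x\rangle$ can be made arbitrarily negative on $\x$, which is the unbounded-box case). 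At that root, $\wh x = x(1/e^\star) = x(\lambda)$ with $\lambda = 1/e^\star$, and plugging back, $E_{\gamma,h}(\wh x) = -(\gamma+\langle h,\wh x\rangle)/Q(\wh x) = e^\star = 1/\lambda$, which is precisely the claim that $\lambda$ is the inverse of the maximum value.

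Finally I would argue optimality rigorously rather than heuristically: show that $x(\lambda)$ with $\lambda=1/e^\star$ actually attains the sup. Given any $x\in\x$, from $\phi(e^\star)=0$ we get $\gamma+\langle h,x\rangle + e^\star Q(x) \ge 0$, hence $-(\gamma+\langle h,x\rangle) \le e^\star Q(x)$, and dividing by $Q(x)>0$ yields $E_{\gamma,h}(x)\le e^\star$; equality holds at $\wh x$ since $\phi$'s minimum is attained there. The case $h\neq 0$ in the hypothesis is used to guarantee $x(\lambda)$ genuinely depends on $\lambda$ and that the minimizer in the inner problem is well-defined and the map $e\mapsto x(1/e)$ is continuous; the degenerate $h=0$ situation is excluded.

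The main obstacle I anticipate is handling the boundary/infinite cases cleanly — showing that $\phi$ has a root in $(0,\infty)$ or else $e^\star=0$ (equivalently $\lambda=+\infty$), and reconciling the latter with the standing assumption that the supremum of $E_{\gamma,h}$ is positive; this requires care because when some bounds are infinite, $\inf_{x\in\x}(\gamma+\langle h,x\rangle)$ may be $-\infty$, and one must check that the strong convexity of $Q$ still forces $\phi(e)>0$ for all $e$ large enough so that the root analysis goes through (or that the finiteness of $\inf_\x Q$ is enough). The separability argument and the envelope-theorem monotonicity of $\phi$ are routine once the reformulation is in place.
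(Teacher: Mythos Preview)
Your argument is correct, but it takes a genuinely different route from the paper's. The paper proceeds by direct first-order (KKT) analysis: it differentiates the identity $E_{\gamma,h}(x)\,Q(x)=-\gamma-\langle h,x\rangle$ to obtain $\nabla E_{\gamma,h}(x)\,Q(x)=-e(x-x^0)-h$, and then writes down the componentwise box-constrained optimality conditions at the maximizer $\wh x$. With $\lambda:=1/e$ these conditions read off immediately as $\wh x_i=\max\{\ul x_i,\min\{x_i^0-\lambda h_i,\ol x_i\}\}$, i.e.\ $\wh x=x(\lambda)$. That is the entire proof; attainment of the maximum and sufficiency of the first-order conditions are taken for granted, and positivity of the supremum is simply quoted from \cite{NeuO}.

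Your Dinkelbach-type reformulation via $\phi(e)=\min_{x\in\x}\bigl(\gamma+\langle h,x\rangle+e\,Q(x)\bigr)$ is more work but also more self-contained: it yields existence of the maximizer and global optimality as byproducts of the monotonicity/root argument, rather than assuming them. The paper's approach is shorter and delivers the explicit componentwise description~(\ref{e.zs}) directly, which is exactly what Propositions~\ref{p.interval}--\ref{p.2} build on. Incidentally, your worry about the $\lambda=+\infty$ case is moot under the standing hypothesis: since the supremum is positive, your root $e^\star$ is strictly positive and $\lambda=1/e^\star$ is finite, so that clause of the statement never actually fires.
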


%%%%%%%%%%%%%
\begin{proof}
The function $E_{\gamma,h}: \mathcal{V}  \rightarrow \Rz $ 
defined by (\ref{e.ex}) is continuously differentiable and 
from Proposition 2.1 in \cite{NeuO} we have $e:= E_{\gamma,h} > 0$. 
By differentiating both side of the 
equation $E_{\gamma,h}(x) Q(x) = -\gamma - \< h,x \>$, we obtain
\begin{equation*}
\frac{\partial E_{\gamma,h}}{\partial x} Q(x) =  - e (x - x^0) - h.
\end{equation*}
At the maximizer $\wh x$, we have 
$e Q(\wh x) = -\gamma - \< h,\wh x \>$. Now the first-order optimality 
conditions imply that for $i = 1, 2, \ldots, n$,
\begin{equation}
-e (\wh x_i - x_i^0) - h_i \left \{
\begin{array}{lll}\label{e.kkt}
\leq 0 &~~ \mathrm{if}~ \wh x_i = \ul{x}_i,\\
\geq 0 &~~ \mathrm{if}~ \wh x_i = \ol{x}_i,\\
= 0    &~~ \mathrm{if}~ \ul{x}_i < \wh x_i < \ol{x}_i.
\end{array}
\right.
\end{equation}
Since $e > 0$, we may define  $\lambda := e^{-1}$ and find that,
for $i = 1, 2, \ldots, n$,
\begin{equation}\label{e.zs}
\wh x_i = \left \{
\begin{array}{lll}
\ul{x}_i           &~~ \mathrm{if}~ \ul{x}_i \geq x_i^0 -\lambda h_i,\\
\ol{x}_i           &~~ \mathrm{if}~ \ol{x}_i \leq x_i^0 -\lambda h_i,\\
x_i^0 -\lambda h_i &~~ \mathrm{if}~ 
                        \ul{x}_i \leq x_i^0 -\lambda h_i \leq \ol{x}_i.
\end{array}
\right.
\end{equation}
This implies that $\wh x = x(\lambda)$. 
\end{proof}
%%%%%%%%%%%

Proposition \ref{p.1} gives the key feature of the solution of the subproblem
(\ref{e.sub}) implying that it can be considered in the form of (\ref{e.sol}) with only one variable $\lambda$. In the remainder of this section, we focus on deriving the optimal $\lambda$. 

%%%%%%%%%%%%%%%%%%%%%%%%%%%%
\begin{exam}\label{e.nonneg}
Let first consider a very special case that $\x$ is nonnegative orthant, i.e., $\underline{x}_i= 0$ and $\overline{x}_i = +\infty$, for $i=1,\cdots,n$. Nonnegativity constraint is important in many applications, see \cite{BarV,ElfHN,EssLX,KauN1,KauN2}. In this case we consider the quadratic function
\lbeq{e.prox11}
Q(z):= \half \|z\|_2^2 + Q_0,
\eeq
where $Q_0 > 0$. In \cite{Aho}, it is proved that (\ref{e.prox11}) is a prox-function. By using this prox-function and (\ref{e.sol}), we obtain 
\[
x(\lambda) = \sup \{ \ul{x},\inf\{-\lambda h,\ol{x}\} = - \lambda (h)_-,
\]
where $(z)_- = \min \{0,z\}$. By Proposition 2.2 of \cite{NeuO}, we have 
\[
\frac{1}{\lambda} \left( \frac{1}{2} \|x(\lambda)\|_2^2 + Q_0 \right) + \gamma + \langle h,x(\lambda) \rangle = \left( \frac{1}{2} \|(h)_-\|_2^2 + \langle h,x(\lambda) \rangle  \right) \lambda^2 + \gamma \lambda + Q_0 = 0.
\]
By substituting $\lambda = 1/e$ into this equation, we get
\[
\beta_1 e^2 + \beta_2 \lambda + \beta_3 = 0,
\]
where $\beta_1 = Q_0$, $\beta_2 = \gamma$, and $\beta_3 =\frac{1}{2} \|(h)_-\|_2^2 - \langle h,(h)_-$.
Since we search for the maximum $e$, the solution is the bigger root of this equation, i.e., 
\[
e =  \frac{-\beta_2 + \sqrt{\beta_2^2-4 \beta_1 \beta_3}}{2\beta_1}.
\]
This shows that for the nonnegativity constraint the subproblem (\ref{e.sub}) can be solved in a closed form, however, for a general bound-constrained problem, we need a much more sophisticated scheme to solve (\ref{e.sub}).  
\end{exam}
%%%%%%%%%%

To derive the optimal $\lambda \geq 0$ in Proposition \ref{p.1}, we first determine its permissible range provided
by the three conditions considered in (\ref{e.zs}) leading to the 
interval 
\begin{equation}\label{e.inti}
\lambda \in [\ul{\lambda}_i,\ol{\lambda}_i],
\end{equation}
for each component of $x$. In particular, if $h_i = 0$, since $x^0$
is a feasible point, $\wh x_i = x_i^0 -\lambda h_i = x_i^0$ satisfies the third 
condition in (\ref{e.zs}). Thus there is no upper bound for $\lambda$, leading to
\begin{equation}\label{e.lamr0}
\D \ul{\lambda}_i = 0, ~~\ol{\lambda}_i = + \infty 
~~~ \mathrm{if}~ \wh x_i = x_i^0,~ h_i=0.
\end{equation}
If $h_i \neq 0$, we consider the three cases 
(i) $\ul{x}_i \geq x_i^0 -\lambda h_i$,
(ii) $\ol{x}_i \leq x_i^0 -\lambda h_i$, and 
(iii) $\ul{x}_i \leq x_i^0 -\lambda h_i \leq \ol{x}_i$ of (\ref{e.zs}).
In Case (i), if $h_i < 0$, division by $h_i$ implies that 
$\lambda \leq - (\ul{x}_i - x_i^0)/h_i \leq 0$, which is not in the acceptable range
for $\lambda$. In this case, if $h_i > 0$ then
$\lambda \geq - (\ul{x}_i - x_i^0)/h_i$ leading to
\begin{equation}\label{e.lamr1}
\D \ul{\lambda}_i = -\frac{\ul{x}_i - x_i^0}{h_i},~\ol{\lambda}_i = + \infty 
~~~ \mathrm{if}~ \wh x_i = \ul{x}_i,~h_i > 0.
\end{equation}
In Case (ii), if $h_i < 0$ then $\lambda \geq - (\ol{x}_i - x_i^0)/h_i$ implying
\begin{equation}\label{e.lamr2}
\D \ul{\lambda}_i= - \frac{\ol{x}_i - x_i^0}{h_i},~\ol{\lambda}_i= + \infty 
~~~ \mathrm{if}~ \wh x_i = \ol{x}_i,~ h_i < 0.
\end{equation} 
In Case (ii), if $h_i > 0$ then $\lambda \leq - (\ol{x}_i - x_i^0)/h_i \leq 0$, which
is not in the acceptable range of $\lambda$. In Case (iii), if $h_i < 0$, division by 
$h_i$ implies 
\[
- \frac{\ul{x}_i - x_i^0}{h_i} \leq \lambda \leq - \frac{\ol{x}_i - x_i^0}{h_i}.
\] 
The lower bound satisfies $- (\ul{x}_i - x_i^0)/h_i \leq 0$, so it is not acceptable, leading to 
\begin{equation}\label{e.lamr3}
\D \ul{\lambda}_i= 0,~\ol{\lambda}_i =-\frac{\ol{x}_i - x_i^0}{h_i}
~~~ \mathrm{if}~ \wh x_i = x_i^0 -\lambda h_i \in [\ul{x}_i \ol{x}_i],~ h_i < 0.
\end{equation}
In Case (iii), if $h_i > 0$, then 
\[
- \frac{\ol{x}_i - x_i^0}{h_i} \leq \lambda \leq - \frac{\ul{x}_i - x_i^0}{h_i}.
\] 
However, the lower bound $- (\ol{x}_i - x_i^0)/h_i \leq 0$ is not acceptable, i.e.,
\begin{equation}\label{e.lamr4}
\D \ul{\lambda}_i= 0,~\ol{\lambda}_i =-\frac{\ul{x}_i - x_i^0}{h_i}
~~~ \mathrm{if}~ \wh x_i = x_i^0 -\lambda h_i \in [\ul{x}_i \ol{x}_i],~ h_i > 0.
\end{equation}
As a result, the following proposition is valid.

%%%%%%%%%%%%%%%%%%%%%%%%%%%%%%%
\begin{proposition} \label{p.interval}
If $x(\lambda)$ is solution of the problem (\ref{e.sub}), then 
\[
\lambda \in [\underline{\lambda}_i,\overline{\lambda}_i] ~~~  i = 1, \cdots, n,
\]
where $\underline{\lambda}_i$ and $\overline{\lambda}_i$ 
are computed by
\begin{equation}\label{e.lambda0}
\underline{\lambda}_j = \left \{
\begin{array}{ll}
\D -\frac{\ul{x}_i - x_i^0}{h_i}    & \mathrm{if}~ \wh x_i = \ul{x}_i,~ h_i > 0,\\
\D - \frac{\ol{x}_i - x_i^0}{h_i}    & \mathrm{if}~ \wh x_i = \ol{x}_i,~ h_i < 0,\\
\D 0 & \mathrm{if}~ \widetilde{x}_i \in [\ul{x}_i \ol{x}_i],~ h_i < 0,\\
\D 0 & \mathrm{if}~ \widetilde{x}_i \in [\ul{x}_i \ol{x}_i],~ h_i > 0,\\
\D 0                                & \mathrm{if}~ h_i=0,
\end{array}
\right. 
\overline{\lambda}_j = \left \{
\begin{array}{ll}
\D + \infty     & \mathrm{if}~ \wh x_i = \ul{x}_i,~ h_i > 0,\\
\D + \infty & \mathrm{if}~ \wh x_i = \ol{x}_i,~ h_i < 0,\\
\D -\frac{\ol{x}_i - x_i^0}{h_i}     & \mathrm{if}~ \widetilde{x}_i \in [\ul{x}_i \ol{x}_i],~ h_i < 0,\\
\D -\frac{\ul{x}_i - x_i^0}{h_i}     & \mathrm{if}~ \widetilde{x}_i \in [\ul{x}_i \ol{x}_i],~ h_i > 0,\\
\D + \infty                            & \mathrm{if}~ h_i=0,
\end{array}
\right.
\end{equation}
in which $\widetilde{x}_i = x_i^0 -\lambda h_i$ for $i = 1, \cdots, n$.
\end{proposition}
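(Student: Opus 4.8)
The plan is to reduce the subproblem to the single variable $\lambda$ via Proposition \ref{p.1} and then to carry out a component-wise case analysis; this is essentially the bookkeeping already displayed in \gzit{e.lamr0}--\gzit{e.lamr4}, collected into one statement. First I would recall that, since $h\neq 0$, Proposition \ref{p.1} guarantees that any solution of \gzit{e.sub} has the form $\wh x=x(\lambda)$ with $\lambda=e^{-1}$, where $e=E_{\gamma,h}(\wh x)>0$; in particular $\lambda>0$ (the value $\lambda=+\infty$ is retained in the notation only for uniformity). Hence every necessary condition on $\lambda$ derived below must be intersected with $\lambda\ge 0$, and it is exactly this that discards the inadmissible subcases.

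Next I would fix a component index $i$ and split according to which of the three branches of \gzit{e.zs} holds at $\wh x_i$ and, when relevant, according to the sign of $h_i$. If $h_i=0$, then $\wh x_i=x_i^0-\lambda h_i=x_i^0$; since $x^0\in\x$ we have $\ul x_i\le x_i^0\le\ol x_i$, so the interior branch of \gzit{e.zs} holds for every $\lambda$, imposing no restriction, i.e.\ $[\ul\lambda_i,\ol\lambda_i]=[0,+\infty]$ as in \gzit{e.lamr0}. If $h_i\neq 0$, each branch of \gzit{e.zs} becomes an inequality in the unknown $\lambda$; dividing through by $h_i$ --- and reversing the inequality when $h_i<0$ --- rewrites the branch $\wh x_i=\ul x_i$ as a one-sided bound on $\lambda$ involving $-(\ul x_i-x_i^0)/h_i$, the branch $\wh x_i=\ol x_i$ as a one-sided bound involving $-(\ol x_i-x_i^0)/h_i$, and the interior branch as a two-sided bound with these two quantities as endpoints, the direction of each bound being dictated by the sign of $h_i$.

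Then I would use feasibility of $x^0$ to eliminate the inadmissible subcases. Because $\ul x_i-x_i^0\le 0\le\ol x_i-x_i^0$, the number $-(\ul x_i-x_i^0)/h_i$ is nonnegative precisely when $h_i>0$, and $-(\ol x_i-x_i^0)/h_i$ is nonnegative precisely when $h_i<0$. Confronting the bounds from the previous paragraph with $\lambda\ge 0$: in the branch $\wh x_i=\ul x_i$ the subcase $h_i<0$ would force $\lambda\le -(\ul x_i-x_i^0)/h_i\le 0$ and is impossible, leaving only $h_i>0$ with $\ul\lambda_i=-(\ul x_i-x_i^0)/h_i$ and $\ol\lambda_i=+\infty$; symmetrically, in the branch $\wh x_i=\ol x_i$ only $h_i<0$ survives, with $\ul\lambda_i=-(\ol x_i-x_i^0)/h_i$ and $\ol\lambda_i=+\infty$; and in the interior branch the endpoint coming from the ``wrong'' bound is $\le 0$, hence replaced by $0$, leaving $\ol\lambda_i=-(\ol x_i-x_i^0)/h_i$ when $h_i<0$ and $\ol\lambda_i=-(\ul x_i-x_i^0)/h_i$ when $h_i>0$. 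Reading off the surviving cases reproduces exactly the table \gzit{e.lambda0}.

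Finally, since the necessary condition $\lambda\in[\ul\lambda_i,\ol\lambda_i]$ has been established for every $i=1,\dots,n$, the $\lambda$ associated with the solution lies in $\bigcap_{i=1}^n[\ul\lambda_i,\ol\lambda_i]$, which is the claim. I expect no analytic difficulty; the only real obstacle is the organizational one of keeping the branch/sign combinations straight and checking in each that $x^0\in\x$ is precisely what makes one endpoint trivial ($0$ or $+\infty$) --- but this has already been done in the discussion preceding the statement, so the proof reduces to assembling \gzit{e.lamr0}--\gzit{e.lamr4}.
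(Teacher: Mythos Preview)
Your proposal is correct and follows exactly the paper's approach: the paper does not give a separate proof block for this proposition but instead derives the case analysis \gzit{e.lamr0}--\gzit{e.lamr4} in the text preceding the statement and then declares ``As a result, the following proposition is valid.'' Your write-up is precisely this case analysis, organized in the same way (branch of \gzit{e.zs}, sign of $h_i$, intersection with $\lambda\ge 0$ using feasibility of $x^0$), and you even note explicitly that the proof amounts to assembling \gzit{e.lamr0}--\gzit{e.lamr4}.
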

%%%%%%%%%%

Proposition \ref{p.interval} implies that each element of $x$ satisfies in only one of the conditions 
(\ref{e.lamr0})--(\ref{e.lamr4}). Thus, for each $i = 1, \dots, n$ with $h_i \neq 0$, we have a 
single breakpoint 
\begin{equation}\label{e.bre}
\widetilde{\lambda}_i := \left \{
\begin{array}{ll}
\D- \frac{\ol{x}_i - x_i^0}{h_i} &~~ \mathrm{if}~ h_i <0,\\
\D- \frac{\ul{x}_i - x_i^0}{h_i} &~~ \mathrm{if}~ h_i >0,\\
\D +\infty                        &~~ \mathrm{if}~ h_i = 0.
\end{array}
\right.
\end{equation}

Sorting the $n$ bounds $\widetilde{\lambda}_i, ~ i = 1, \dots, n$, in increasing order, augmenting the resulting 
list by $0$ and $+\infty$, and deleting possible duplicate 
points, we obtain a list of $m + 1 \leq n + 2$ different breakpoints, 
denoted by
\begin{equation}\label{e.bpoi}
0 = \lambda_1 < \lambda_2< \ldots < \lambda_{m}< \lambda_{m +1} 
= + \infty.
\end{equation}
By construction, $x(\lambda)$ is linear in each interval $[\lambda_k, \lambda_{k+1}]$, for $k = 1, \cdots, m$. The next proposition gives an explicit representation for $x(\lambda)$.

%%%%%%%%%%%%%%%%%%%%%%%%%%%%%%
\begin{proposition}\label{p.2}
The solution $x(\lambda)$ of the auxiliary problem (\ref{e.sub}) defined by (\ref{e.sol}) has the form
\begin{equation}\label{e.xlam}
x(\lambda) = p^k + \lambda q^k ~~~ 
\mathrm{for}~ \lambda \in [\lambda_k,\lambda_{k+1}]~~~ 
(k = 1,2, \ldots, m),
\end{equation}
where 
\begin{equation}\label{e.pkqk}
p_i^k = \left \{
\begin{array}{ll}
x_i^0    &~~ \mathrm{if}~ h_i = 0,\\
x_i^0    &~~ \mathrm{if}~ \lambda_{k+1} \leq \widetilde{\lambda}_i,\\
\ul{x}_i &~~ \mathrm{if}~ \lambda_k \geq \widetilde{\lambda}_i, ~ h_i > 0,\\
\ol{x}_i &~~ \mathrm{if}~ \lambda_k \geq \widetilde{\lambda}_i, ~ h_i < 0,
\end{array}
\right. ~~~~~~
q_i^k = \left \{
\begin{array}{ll}
0     &~~ \mathrm{if}~ h_i = 0,\\
- h_i &~~ \mathrm{if}~ \lambda_{k+1} \leq \widetilde{\lambda}_i,\\
0     &~~ \mathrm{if}~ \lambda_k \geq \widetilde{\lambda}_i, ~ h_i > 0,\\
0     &~~ \mathrm{if}~ \lambda_k \geq \widetilde{\lambda}_i, ~ h_i < 0.
\end{array}
\right.
\end{equation}
\end{proposition}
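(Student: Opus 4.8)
The plan is to verify the claimed formula \eqref{e.xlam}--\eqref{e.pkqk} componentwise: fix an index $i$ and a breakpoint interval $[\lambda_k,\lambda_{k+1}]$, and show that for every $\lambda$ in that interval the value $x_i(\lambda)=\sup\{\ul x_i,\inf\{x_i^0-\lambda h_i,\ol x_i\}\}$ equals $p_i^k+\lambda q_i^k$ with $p_i^k,q_i^k$ as defined. Since \eqref{e.sol} is a coordinatewise projection, the only nontrivial work is one-dimensional. The key structural input from the preceding discussion is that, for $h_i\neq 0$, the scalar $\widetilde\lambda_i$ of \eqref{e.bre} is the \emph{single} breakpoint at which $x_i^0-\lambda h_i$ crosses the relevant bound, so on any breakpoint subinterval $[\lambda_k,\lambda_{k+1}]$ the coordinate $x_i(\lambda)$ is either identically clamped to a bound or identically equal to the affine expression $x_i^0-\lambda h_i$; there is no sub-breakpoint behaviour because the breakpoint list \eqref{e.bpoi} was built precisely by collecting all the $\widetilde\lambda_i$.

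First I would dispose of the trivial case $h_i=0$: then $x_i^0-\lambda h_i=x_i^0\in[\ul x_i,\ol x_i]$ for all $\lambda$, so $x_i(\lambda)=x_i^0=p_i^k+\lambda\cdot 0$, matching the first line of \eqref{e.pkqk}. Next, assume $h_i>0$. Then $\lambda\mapsto x_i^0-\lambda h_i$ is strictly decreasing, it equals $\ul x_i$ exactly at $\lambda=\widetilde\lambda_i=-(\ul x_i-x_i^0)/h_i$, and (because $x^0$ is feasible, so $x_i^0\le\ol x_i$) it never exceeds $\ol x_i$ for $\lambda\ge 0$; hence the upper clamp is inactive and $x_i(\lambda)=\max\{\ul x_i,\,x_i^0-\lambda h_i\}$. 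On a subinterval with $\lambda_{k+1}\le\widetilde\lambda_i$ we have $x_i^0-\lambda h_i\ge \ul x_i$ throughout, so $x_i(\lambda)=x_i^0-\lambda h_i=p_i^k+\lambda q_i^k$ with $p_i^k=x_i^0$, $q_i^k=-h_i$; on a subinterval with $\lambda_k\ge\widetilde\lambda_i$ we have $x_i^0-\lambda h_i\le\ul x_i$ throughout, so $x_i(\lambda)=\ul x_i=p_i^k+\lambda q_i^k$ with $p_i^k=\ul x_i$, $q_i^k=0$. The case $h_i<0$ is the mirror image: $x_i^0-\lambda h_i$ is increasing, crosses $\ol x_i$ at $\widetilde\lambda_i=-(\ol x_i-x_i^0)/h_i$, stays $\ge\ul x_i$ for $\lambda\ge 0$ by feasibility, and the same two subcases give $p_i^k=x_i^0,q_i^k=-h_i$ respectively $p_i^k=\ol x_i,q_i^k=0$.

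The only point that needs a word of care — and the place I would be most careful — is checking that the four cases listed in \eqref{e.pkqk} are genuinely exhaustive and mutually exclusive on each subinterval $[\lambda_k,\lambda_{k+1}]$, i.e.\ that for $h_i\neq 0$ exactly one of ``$\lambda_{k+1}\le\widetilde\lambda_i$'' and ``$\lambda_k\ge\widetilde\lambda_i$'' holds. This is exactly where the construction \eqref{e.bpoi} of the breakpoint list is used: every finite $\widetilde\lambda_i$ is one of the $\lambda_j$, so it cannot lie strictly inside any open interval $(\lambda_k,\lambda_{k+1})$; therefore either $\widetilde\lambda_i\le\lambda_k$ (giving the clamped case) or $\widetilde\lambda_i\ge\lambda_{k+1}$ (giving the affine case), and when $\widetilde\lambda_i$ coincides with an endpoint both descriptions agree in value at that endpoint, so the piecewise-affine function $x(\lambda)$ is well defined and continuous. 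Assembling the coordinates, $x(\lambda)=p^k+\lambda q^k$ on $[\lambda_k,\lambda_{k+1}]$ with $p^k=(p_i^k)$, $q^k=(q_i^k)$ as in \eqref{e.pkqk}, which is the claim.
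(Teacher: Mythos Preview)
Your proof is correct and follows essentially the same approach as the paper: a componentwise case analysis that treats $h_i=0$ trivially, then for $h_i\neq 0$ uses the crucial fact that $\widetilde\lambda_i\notin(\lambda_k,\lambda_{k+1})$ (by construction of the breakpoint list) to split into the affine case $\lambda_{k+1}\le\widetilde\lambda_i$ and the clamped case $\lambda_k\ge\widetilde\lambda_i$. The only stylistic difference is that you argue directly from monotonicity of $\lambda\mapsto x_i^0-\lambda h_i$ and feasibility of $x^0$ to see which bound is active, whereas the paper routes this through the preceding Proposition~\ref{p.interval} and the case list \eqref{e.lamr0}--\eqref{e.lamr4}; your explicit remark about continuity at breakpoint endpoints is a nice touch not spelled out in the paper.
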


%%%%%%%%%%%%%
\begin{proof}
Since $\lambda > 0$, there exists $k \in \{1, \cdots, m\}$ such that $\lambda \in [\lambda_k,\lambda_{k+1}]$. Let $i \in \{1, \cdots, n\}$. If $h_i = 0$, (\ref{e.lamr0})
implies $\wh x_i = x_i^0$. If $h_i \neq 0$, the way of construction of $\lambda_i$ for $i = 1, \cdots, m$ implies that $\widetilde{\lambda}_i \not\in (\lambda_k,\lambda_{k+1})$, so two cases are distinguished: 
(i) $\lambda_{k+1} \leq \widetilde{\lambda}_i$; 
(ii) $\lambda_k \geq \widetilde{\lambda}_i$. 
In Case (i), Proposition \ref{p.interval} implies that $\widetilde{\lambda}_i = \overline{\lambda}_i$, while it is not possible $\widetilde{\lambda}_i \neq \underline{\lambda}_i$. Therefore, either (\ref{e.lamr3}) or (\ref{e.lamr4}) holds dependent on the sign of $h_i$, implying $x_i^0 -\lambda h_i \in [\underline{x}_i, \overline{x}_i]$, 
so that $p_i^k = x_i^0$ and $q_i^k = -h_i$. 
In Case (ii), Proposition \ref{p.interval} implies that $\widetilde{\lambda}_i = \underline{\lambda}_i$, while it is not possible $\widetilde{\lambda}_i \neq \overline{\lambda}_i$. Therefore, either (\ref{e.lamr1}) or (\ref{e.lamr2}) holds. If $h_i < 0$, then (\ref{e.lamr2}) holds, i.e., $p_i^k = \ol{x}_i$ and $q_i^k = 0$. 
Otherwise, (\ref{e.lamr1}) holds, implying $p_i^k = \ul{x}_i$ and $q_i^k = 0$.
This proves the claim.   
\end{proof}
%%%%%%%%%%%

Proposition \ref{p.2} exhibits the solution $x(\lambda)$ of the 
auxiliary problem (\ref{e.sub}) as a piecewise linear function of $\lambda$.
In the next result, we show that solving the problem (\ref{e.sub}) is 
equivalent to maximizing a one-dimensional piecewise rational function.

%%%%%%%%%%%%%%%%%%%%%%%%%%%%%%
\begin{proposition}\label{p.3}
The maximal value of the subproblem (\ref{e.sub}) is the maximum of 
the piecewise rational function $e(\lambda)$ defined by 
\begin{equation}\label{e.lam}
e(\lambda) 
:= \frac{a_k + b_k \lambda}{c_k + d_k \lambda + s_k \lambda^2} 
~~~\mbox{if}~~ \lambda \in [\lambda_k,\lambda_{k+1}]~~~
(k= 1, 2, \ldots, m),
\end{equation}  
where
\[
a_k := -\gamma - \< h, p^k \>,~~~ 
b_k := -\< h, q^k \>, 
\]
\[
c_k := Q_0 + \frac{1}{2} \| p^k - x^0\|^2, ~~~
d_k := \< p^k - x^0, q^k \>, ~~~
s_k := \frac{1}{2} \|q^k\|^2.
\] 
Moreover, $c_k > 0$, $c_k > 0$ and $4s_k c_k > d_k^2$.
\end{proposition}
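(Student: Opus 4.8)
The plan is to reduce the $n$-dimensional rational program (\ref{e.sub}) to a one-dimensional one by restricting the objective $E_{\gamma,h}$ of (\ref{e.ex}) to the piecewise-linear path $\lambda\mapsto x(\lambda)$ supplied by Proposition \ref{p.2}, and then to read off the coefficients $a_k,\dots,s_k$ by a direct substitution. Throughout we are in the situation $h\neq 0$ of Proposition \ref{p.1} (if $h=0$ the problem (\ref{e.sub}) reduces to minimizing $Q$ over $\x$ and is trivial). First I would record the elementary two-sided comparison that legitimizes passing to one variable: write $v$ for the optimal value of (\ref{e.sub}), known to be positive. By Proposition \ref{p.1} it is attained at some $\wh x=x(\lambda^\ast)$ with $\lambda^\ast=1/v\in(0,+\infty)$ (the value $+\infty$ is excluded because $v>0$), so $v=E_{\gamma,h}(x(\lambda^\ast))$; conversely every $x(\lambda)$ with $\lambda\ge 0$ lies in $\x$ by (\ref{e.sol}), so $E_{\gamma,h}(x(\lambda))\le v$ for all such $\lambda$. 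Hence $v=\max_{\lambda\ge 0}E_{\gamma,h}(x(\lambda))$, and it only remains to express $E_{\gamma,h}(x(\lambda))$ explicitly on each breakpoint interval.

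Second, fix $k\in\{1,\dots,m\}$ and $\lambda\in[\lambda_k,\lambda_{k+1}]$ and substitute $x(\lambda)=p^k+\lambda q^k$ (Proposition \ref{p.2}) into (\ref{e.ex}). The numerator $-(\gamma+\langle h,x(\lambda)\rangle)=-\gamma-\langle h,p^k\rangle-\lambda\langle h,q^k\rangle$ is affine in $\lambda$ with the stated coefficients $a_k,b_k$. For the denominator, write $x(\lambda)-x^0=(p^k-x^0)+\lambda q^k$ and expand the square in $Q(x)=Q_0+\half\|x-x^0\|^2$; this gives $Q(x(\lambda))=\bigl(Q_0+\half\|p^k-x^0\|^2\bigr)+\lambda\langle p^k-x^0,q^k\rangle+\half\lambda^2\|q^k\|^2=c_k+d_k\lambda+s_k\lambda^2$. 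Dividing, $E_{\gamma,h}(x(\lambda))=e(\lambda)$ on $[\lambda_k,\lambda_{k+1}]$, which together with the first step yields the representation (\ref{e.lam}). This step is a routine computation using only Propositions \ref{p.1}--\ref{p.2} and the definitions (\ref{e.pkqk}); no estimates are needed.

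Third, for the ``moreover'' assertions I would argue as follows. From (\ref{e.pkqk}), $c_k=Q_0+\half\|p^k-x^0\|^2\ge Q_0>0$ and $s_k=\half\|q^k\|^2\ge 0$. For the discriminant bound, apply the Cauchy--Schwarz inequality to $d_k=\langle p^k-x^0,q^k\rangle$: since $\|p^k-x^0\|^2=2(c_k-Q_0)$ and $\|q^k\|^2=2s_k$, one gets $d_k^2\le\|p^k-x^0\|^2\|q^k\|^2=4c_ks_k-4Q_0s_k\le 4c_ks_k$, strictly whenever $q^k\neq 0$ (then $s_k>0$, hence $4Q_0s_k>0$). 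Because $s_k=0$ forces $q^k=0$ and hence $d_k=0$, the polynomial $c_k+d_k\lambda+s_k\lambda^2$ is in every case strictly positive for all real $\lambda$ — either a positive quadratic with negative discriminant, or the positive constant $c_k$. Equivalently, this positivity is just $Q(x(\lambda))>0$, which holds because $Q$ is a prox-function with $\inf_{x\in\x}Q>0$. Since additionally $e(\lambda)=E_{\gamma,h}(x(\lambda))>0$ on the relevant range — by Proposition 2.1 of \cite{NeuO} as used in the proof of Proposition \ref{p.1} — the numerator $a_k+b_k\lambda$ is positive there as well.

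The only delicate point, and what I expect to be the main obstacle in writing this cleanly, is the behaviour on the unbounded last piece $[\lambda_m,\lambda_{m+1}]=[\lambda_m,+\infty)$: there $q^m=0$ (every finite breakpoint $\widetilde\lambda_i$ satisfies $\widetilde\lambda_i\le\lambda_m$, so by (\ref{e.pkqk}) all components of $q^m$ vanish), whence $s_m=d_m=0$ and $e(\lambda)\equiv a_m/c_m$ is constant; the inequality $4s_kc_k>d_k^2$ then degenerates to $0=0$ and should be read as $4s_kc_k\ge d_k^2$, strict exactly when $q^k\neq 0$. Correspondingly one must make sure that $\lambda^\ast=+\infty$ is genuinely excluded (using $v>0$), so that the global maximum is attained at a finite $\lambda^\ast$ lying in one of the finitely many pieces; on the last piece the constant value is already attained at $\lambda_m$, so in all cases $v=\max_{1\le k\le m}\ \max_{\lambda\in[\lambda_k,\lambda_{k+1}]}e(\lambda)$, each inner maximum being a well-posed optimization of a pole-free rational function. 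Everything else is the substitution above.
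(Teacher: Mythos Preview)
Your argument is essentially the paper's: both reduce to one dimension via Propositions~\ref{p.1}--\ref{p.2} and then expand $E_{\gamma,h}(p^k+\lambda q^k)$ to read off $a_k,\dots,s_k$. The only methodological difference is in the ``moreover'' clause: the paper deduces $4s_kc_k>d_k^2$ from the fact that the denominator equals $Q(x(\lambda))$, which is bounded away from zero and hence has no real root, whereas you obtain it directly from Cauchy--Schwarz applied to $d_k=\langle p^k-x^0,q^k\rangle$; both are short and valid. Your treatment is in fact more careful on one point the paper glosses over: the paper asserts $s_k>0$ for every $k$, arguing that the index set $\{i:\lambda_{k+1}\le\widetilde\lambda_i\}$ always contains some $i$ with $h_i\neq 0$, but on the last piece $[\lambda_m,+\infty)$ this fails whenever the box is bounded (then every finite $\widetilde\lambda_i$ is $\le\lambda_m$), giving $q^m=0$, $s_m=d_m=0$, and $e(\lambda)\equiv a_m/c_m$. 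Your reading of the strict inequality as $4s_kc_k\ge d_k^2$, strict exactly when $q^k\neq 0$, is the correct statement; the subsequent Proposition~\ref{p.4}, which assumes $s>0$, then applies only to the pieces $k<m$, while the constant last piece is handled trivially --- exactly as you note.
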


%%%%%%%%%%%%%
\begin{proof}
By Proposition \ref{p.1} and \ref{p.2}, the global minimizer of 
(\ref{e.sub}) has the form (\ref{e.xlam}).
For $k = 1, 2, \ldots, m$ and $\lambda \in [\lambda_k,\lambda_{k+1}]$,
we substitute (\ref{e.xlam}) into the function (\ref{e.ex}), and obtain
\begin{equation}\label{e.xlami}\begin{split}
E_{\gamma,h}(x(\lambda)) 
&= -\frac{\gamma + \< h, x^k(\lambda) \>}{Q(x^k(\lambda))} 
= -\frac{\gamma + \< h, p^k + q^k \lambda \>}{Q(p^k + q^k \lambda)}\\
& = -\frac{\gamma + \< h, p^k \> + \< h, q^k \> \lambda}
{Q_0 + \frac{1}{2} \| p^k - x^0\|^2 + \< p^k - x^0, q^k \> \lambda 
     + \frac{1}{2} \|q^k\|^2 \lambda^2} = e(\lambda),
\end{split}\end{equation}
as defined in the proposition. 

Since $Q_0 > 0$, we have $c_k > 0$ and the denominator 
of (\ref{e.lam}) is bounded away from zero, implying $4s_k c_k > d_k^2$. It is 
enough to verify $s_k >0$. The definition of $q_k$ in (\ref{e.pkqk}) implies
that $h_i \neq 0$ for $i \in I = \{i ~:~ \lambda_{k+1} \leq \widetilde{\lambda}_i\}$
leading to $q^k \neq 0$ implying $s_k > 0$. 
\end{proof}
%%%%%%%%%%%

The next result leads to a systematic way to maximize the  
one-dimensional rational problem (\ref{e.lam}). 

%%%%%%%%%%%%%%%%%%%%%%%%%%%%%%
\begin{proposition}\label{p.4}
Let $a$, $b$, $c$, $d$, and $s$ be real constants with $c > 0$, $s > 0$, and $4sc > d^2$. Then  
\begin{equation}\label{e.hlam}
\phi(\lambda) := \frac{a + b \lambda}{c + d \lambda + s \lambda^2}
\end{equation}  
defines a function $\phi:\Rz \rightarrow \Rz$ that has at least one
stationary point. Moreover, the global maximizer of $\phi$ is determined
by the following cases:\\\\
(i) If $b \neq 0$, then $a^2 - b (ad - bc)/s > 0$ and the global maximum
\begin{equation}\label{e.phil2}
\phi(\wh \lambda) = \frac{b}{2s \wh \lambda + d}
\end{equation}
is attained at
\begin{equation}\label{e.lam2}
\wh \lambda = \frac{-a + \sqrt{a^2 - b(ad - bc)/s}}{b}.
\end{equation}
(ii) If $b = 0$ and $a > 0$, the global maximum is
\begin{equation}\label{e.phil3}
\phi(\wh \lambda) = \frac{4as}{4cs - d^2},
\end{equation}
attained at
\begin{equation}\label{e.lam3}
\wh \lambda = -\frac{d}{2s}.
\end{equation}
(iii) If $b = 0$ and $a \leq 0$, the maximum is $\phi(\wh \lambda) = 0$, attained at 
$\wh \lambda = + \infty$ for $a<0$ and at all $\lambda \in \Rz$ for $a=0$.
\end{proposition}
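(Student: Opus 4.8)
The plan is to analyze the rational function $\phi(\lambda) = (a+b\lambda)/(c+d\lambda+s\lambda^2)$ by first observing that the hypotheses $c>0$, $s>0$, $4sc>d^2$ guarantee the denominator $D(\lambda):=c+d\lambda+s\lambda^2$ is a positive-definite quadratic, bounded away from zero on all of $\Rz$, so $\phi$ is well-defined and smooth everywhere. Since $\phi(\lambda)\to 0$ as $\lambda\to\pm\infty$ and $\phi$ is continuous, $\phi$ attains a global maximum and a global minimum on $\Rz$ (or, in the degenerate case $a=b=0$, is identically zero); in particular it has at least one stationary point. This disposes of the existence claim and of case (iii) when $a=b=0$ or $b=0,a<0$: if $b=0$ then $\phi(\lambda)=a/D(\lambda)$, and since $D>0$, for $a<0$ the supremum is $0$ approached only as $\lambda\to\pm\infty$, attained nowhere finite, which matches the stated $\wh\lambda=+\infty$ convention.

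Next I would compute $\phi'(\lambda)$ by the quotient rule: $\phi'(\lambda) = \bigl(b\,D(\lambda) - (a+b\lambda)D'(\lambda)\bigr)/D(\lambda)^2$, with $D'(\lambda)=d+2s\lambda$. The numerator $N(\lambda):=bD(\lambda)-(a+b\lambda)(d+2s\lambda)$ expands to a quadratic (in general) in $\lambda$; one checks that the $\lambda^2$-coefficient of $N$ is $bs - 2bs = -bs$, the $\lambda$-coefficient is $bd - ad - 2bd + \text{(wait, recompute)}$ — so $N(\lambda) = -bs\lambda^2 - 2as\lambda + (bc - ad)$. Setting $N(\lambda)=0$: in case (i), $b\neq 0$, divide by $-bs$ to get $\lambda^2 + (2a/b)\lambda - (bc-ad)/(bs) = 0$, whose roots are $\lambda = -a/b \pm \sqrt{(a/b)^2 + (bc-ad)/(bs)} = \bigl(-a \pm \sqrt{a^2 - b(ad-bc)/s}\bigr)/b$ after clearing. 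The discriminant $a^2 - b(ad-bc)/s$ must be shown strictly positive; this should follow from $4sc>d^2$ together with the fact that $\phi$ genuinely varies (it is not constant when $b\neq0$), or by a direct algebraic manipulation — I expect to verify $a^2 - b(ad-bc)/s = a^2 + (b^2c - abd)/s = (sa^2 + b^2 c - abd)/s$ and then show the numerator $sa^2 - abd + b^2 c$ is positive, e.g. by completing the square as $s(a - bd/(2s))^2 + b^2(c - d^2/(4s)) = s(a-bd/(2s))^2 + b^2(4sc-d^2)/(4s) > 0$ since the second term is strictly positive when $b\neq 0$.

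There are then two real stationary points; I would identify which is the maximizer. The product of the roots is $-(bc-ad)/(bs)$ and their arrangement relative to the pole-free line is irrelevant, so instead I would evaluate $\phi$ at a stationary point using the relation that holds there: from $N(\wh\lambda)=0$ one gets $bD(\wh\lambda) = (a+b\wh\lambda)D'(\wh\lambda)$, hence $\phi(\wh\lambda) = (a+b\wh\lambda)/D(\wh\lambda) = b/D'(\wh\lambda) = b/(2s\wh\lambda + d)$, which is formula \gzit{e.phil2}. Comparing the two candidate values $b/(2s\wh\lambda_\pm + d)$ and using the sign of $b$ and of $D'$ at the two roots, the larger one is the root with the ``$+$'' sign as written in \gzit{e.lam2} — I would check this by noting $2s\wh\lambda_+ + d = b\sqrt{\cdots}/|b| \cdot(\text{positive})$, i.e. substituting \gzit{e.lam2} into $2s\wh\lambda + d$ gives $2s\sqrt{a^2-b(ad-bc)/s}/b \cdot \text{sign adjustments}$, so that $b$ and $2s\wh\lambda_+ + d$ have the same sign, making $\phi(\wh\lambda_+) > 0 > \phi(\wh\lambda_-)$; since we already know a global max exists and $\phi\to 0$ at infinity, the positive critical value is the global maximum. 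For case (ii), $b=0$ and $a>0$: then $\phi = a/D$, maximized where $D$ is minimized, i.e. at $\wh\lambda = -d/(2s)$, with minimum value $D(-d/(2s)) = c - d^2/(4s) = (4cs-d^2)/(4s)$, giving $\phi(\wh\lambda) = 4as/(4cs-d^2)$, which is \gzit{e.phil3}–\gzit{e.lam3}.

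The main obstacle I anticipate is the bookkeeping in case (i): correctly expanding the quadratic $N(\lambda)$, matching its roots to the stated closed form \gzit{e.lam2}, proving strict positivity of the discriminant from the hypothesis $4sc>d^2$, and — most delicately — pinning down \emph{which} root is the global maximizer by a clean sign argument rather than brute-force comparison of two messy expressions. Using the simplified critical-value identity $\phi(\wh\lambda)=b/(2s\wh\lambda+d)$ is the key trick that keeps this manageable; everything else is routine algebra plus the elementary observation that a continuous function vanishing at $\pm\infty$ attains its extrema.
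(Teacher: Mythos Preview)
Your proposal is correct and follows essentially the same route as the paper: compute $\phi'$, reduce to the quadratic $-bs\lambda^2-2as\lambda+(bc-ad)=0$, prove the discriminant strictly positive when $b\neq 0$ by the same completion of the square $sa^2-abd+b^2c=s(a-bd/(2s))^2+b^2(4sc-d^2)/(4s)>0$, and handle cases (ii)--(iii) by minimizing $D$ or taking limits.

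There is one place where the paper's execution is a bit cleaner than yours, precisely at the step you flagged as ``most delicate.'' Rather than arguing with the sign of $2s\wh\lambda+d$, the paper observes directly from the root formula that $a+b\wh\lambda_\pm=\pm w$ where $w:=\sqrt{a^2-b(ad-bc)/s}>0$; since the denominator $D(\wh\lambda)$ is positive, this gives $\phi(\wh\lambda_\pm)=\pm w/D(\wh\lambda_\pm)$, so the ``$+$'' root is immediately the maximizer. Your derivation of the identity $\phi(\wh\lambda)=b/(2s\wh\lambda+d)$ via $bD(\wh\lambda)=(a+b\wh\lambda)D'(\wh\lambda)$ is actually slicker than the paper's algebraic chain for that formula, but for selecting the correct root the $a+b\wh\lambda=\pm w$ observation saves you the sign bookkeeping you were bracing for.
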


%%%%%%%%%%%%%
\begin{proof}
The denominator of (\ref{e.hlam}) is positive for all $\lambda \in \mathbb{R}^n$ iff the 
stated condition on the coefficients hold. By the differentiation of $\phi$ 
and using the first-order optimality condition, we obtain
\begin{equation*}
\phi'(\lambda) 
= \frac{b(c + d\lambda + s\lambda^2) - (a + b\lambda) (d + 2s\lambda)}
       {(c + d \lambda + s \lambda^2)^2} 
= - \frac{bs \lambda^2 + 2as \lambda + ad -bc}
         {(c + d \lambda + s \lambda^2)^2}.
\end{equation*}
For solving $\phi'(\lambda) = 0$, we consider possible solutions of the quadratic
equation $bs \lambda^2 + 2as \lambda + ad -bc = 0$. Using the assumption 
$4sc > d^2$, we obtain
\[
\begin{split}
(2as)^2 - 4 bs (ad - bc) &= (2as)^2 - (4 abds - 4b^2cs)\\
&= (2as)^2 - 4 abds - b^2 (d^2 -4cs - d^2)\\
&= (2as)^2 - 4 abds + (bd)^2 - b^2 (d^2 -4cs)\\
& \geq (2as - bd)^2 - b^2 (d^2 -4cs) \geq 0,
\end{split}
\]
leading to
\[
a^2 - \frac{b(ad-bc)}{s} \geq 0,
\]
implying that $\phi'(\lambda) = 0$ has at least one solution. 

(i) If $b \neq 0$, then
\[
a^2 - \frac{b(ad-bc)}{s} = a^2 -\frac{bd}{s}a - \frac{b^2c}{s} = \left( a - \frac{bd}{2s} \right)^2 + \frac{b^2}{4s^2}(4sc-d^2) > 0,
\]
implying there exist two solutions. Solving $\phi'(\lambda) = 0$, 
the stationary points of the function are found to be 
\begin{equation}\label{e.lamb10}
\lambda = \frac{-a \pm \sqrt{a^2 - b(ad - bc)/s}}{b}.
\end{equation}
Therefore, $a + b \lambda = \pm w$ with
\[
w := \sqrt{a^2 - b(ad - bc)/s} > 0,
\] 
and we have
\begin{equation}\label{e.phil}
\phi(\lambda) = \frac{\pm w}{c + d \lambda + s \lambda^2}.
\end{equation} 
Since the denominator of this fraction is positive and $w \geq 0$, 
the positive sign in equation (\ref{e.lamb10}) gives the maximizer, 
implying that (\ref{e.lam2}) is satisfied. Finally, substituting this 
maximizer into (\ref{e.phil}) gives
\begin{equation*}\begin{split}
\phi(\wh \lambda) 
&= \frac{w}{c + d \wh \lambda + s \wh \lambda^2} 
= \frac{b^2 w}{b^2c + bd (w - a) + s (w - a)^2}\\
&=  \frac{b^2 w}{a^2 s - b(ad - bc) + s w^2 + (bd - 2 as)w} 
=  \frac{b^2 w}{2 s w^2 + (bd - 2 as)w}\\
&=  \frac{b^2 w}{w (2s (w - a) + bd)} 
=  \frac{b}{2s \wh \lambda + d},
\end{split}\end{equation*}
so that (\ref{e.phil2}) holds. 

(ii) If $b = 0$, we obtain
\[
\phi'(\lambda) =  \frac{- a (d + 2 s \lambda)}{(c + d \lambda + s \lambda^2)^2}.
\]
Hence the condition $\phi'(\lambda) = 0$ implies that $a = 0$ or $d + 2 s \lambda = 0$.
The latter case implies 
\[
\wh \lambda =- \frac{d}{2s}, ~~~ \phi(\wh \lambda) = 
\frac{4as}{4cs - d^2},
\]
whence $\wh \lambda$ is a stationary point of $\phi$. If $a > 0$, its maximizer is $\wh \lambda =- \frac{d}{2s}$ and (\ref{e.phil3})
is satisfied. 

(iii) If $b=0$ and $a < 0$, then
\[
\lim_{\lambda \rightarrow -\infty} \phi(\lambda) = \lim_{\lambda \rightarrow +\infty} \phi(\lambda) = 0
\]
implies $\phi(\wh \lambda) = 0$ at $\wh \lambda = \pm \infty$. In case $a = 0$, 
$\phi(\lambda) = 0$ for all $\lambda \in \Rz$. 
\end{proof}
%%%%%%%%%%%

We summarize the results of Propositions \ref{p.1}--\ref{p.4} into the 
following algorithm for computing the global optimizer $x_b$ and the 
optimum $e_b$ of (\ref{e.sub}).\\\\

%%%%%%%%%%%%%%%%%%%%%%%%%%%%%%%%%%%%%%
\begin{algorithm}[H] \label{a.subprob}
\DontPrintSemicolon 
% Some LaTeX compilers require you to use \dontprintsemicolon instead
\KwIn{$Q_0, x^0, h, \ul{x}, \ol{x}$;}
\KwOut{$ u_b = U(\gamma, h), e_b = e(x_b)$;}
\Begin{
    \For{ $i = 1, 2, \ldots, n$ }{
        find $\widetilde{\lambda}_i$ by (\ref{e.bre}) using $\ul{x}$ and $\ol{x}$;
    }
    determine the breakpoints $\lambda_k,~ k=1, \dots, m+1$, by (\ref{e.bpoi});\;
    $e_b = 0$;\;
    \For{ $k = 1, 2, \ldots, m $ }{
        compute $p^k$ and $q^k$ using (\ref{e.pkqk});\;
        construct $e(\lambda)$  using (\ref{e.lam}) for 
            $[\lambda_k,\lambda_{k+1}]$;\;
        find the maximizer $\wh \lambda$ of $e(\lambda)$ 
            using Proposition \ref{p.4};\;
        \eIf {$\wh \lambda \in [\lambda_k, \lambda_{k+1}]$}{
            compute $e^k = e(\wh \lambda)$ using Proposition \ref{p.4};\;
            $\wh \lambda^k = \wh \lambda$;\;
        }{
            $e^k = \max\{e(\lambda_k), e(\lambda_{k+1})\}$;\;
            $\wh \lambda^k = \argmax_{i \in \{k, k+1\}} \{e(\lambda_i)\}$;
        }
        $E(k) = e^k$,~ $LAM(k) = \wh \lambda^k$;\;     
    }
    $j = \argmax\{E(i) \mid i = 1, \cdots, m \} $;\;
    $e_b = E(j),~ \wh \lambda = LAM(j),~ u_b = x^0 - \wh \lambda h$;\;
        
}
\caption{ {\bf BCSS} (bound-constrained subproblem solver)}
\end{algorithm}

\vspace{7mm}
% ########################################################
\subsection{Inexact solution of OSGA's rational subproblem (\ref{e.sub})}
In this subsection we give a scheme to compute an inexact solution for the the subproblem (\ref{e.sub}). 

We here use the quadratic prox-function (\ref{e.prox11}). In view of Proposition \ref{p.1} and Theorem 3.1 in \cite{AhoN1}, the solution of the subproblem (\ref{e.sub}) is given by $x(\lambda)$ defined in (\ref{e.sol}), where $\lambda$ can be computed by solving the one-dimensional nonlinear equation 
\[
\varphi(\lambda) = 0,
\]
in which 
\lbeq{e.none}
\varphi(\lambda) := \frac{1}{\lambda} \left(\frac{1}{2}\| x(\lambda)\|_2^2 + Q_0 \right) + \gamma + \langle h, x(\lambda) \rangle.
\eeq
The solution of OSGA's subproblem can be found by Algorithm 3 (OSS) in \cite{AhoN1}. In \cite{AhoN1}, it is shown that in many convex domains the nonlinear equation (\ref{e.none}) can be solved explicitly, however, for the bound-constrained problems it can be only solved approximately. 

As discussed in \cite{AhoN1}, the one-dimensional nonlinear equation can be solved by some zero-finder schemes such as the bisection method and the secant bisection scheme described in Chapter 5 of \cite{NeuB}.
One can also use the MATLAB's $\mathtt{fzero}$ function combining the bisection scheme, the inverse quadratic interpolation, and the secant method. In the next section we will use this inexact solution of OSGA's rational subproblem (\ref{e.sub}) for solving large-scale imaging problems.

\vspace{1mm}
% ########################################################
% ########################################################
\section{Applications and numerical experiments} \label{s.num}
In this section we report some numerical results to show the performance of OSGA compared with some state-of-the-art algorithms on both artificial and real data. 

A software package implementing OSGA for solving unconstrained and bound-constrained convex optimization problems is publicly available at
\begin{center}
\url{http://homepage.univie.ac.at/masoud.ahookhosh/}.
\end{center}
The package is written in MATLAB, where the parameters 
\begin{equation*}
\delta = 0.9;~~ \alpha_{max} = 0.7;~~ \kappa = \kappa' = 0.5;~~ \Psi_{\mathrm{target}} = - \infty 
\end{equation*}
are used. We also use the prox-function (\ref{e.prox}) with 
$Q_0 = \frac{1}{2} \|x_0\|_2 + \epsilon$, where $\epsilon$ is the machine precision. Some examples for each class of problems are available to show how the user can implement it. The interface to each subprogram in the package is fully documented in the corresponding file. Moreover, the OSGA user's manual \cite{AhoUM} describes the design of the package and how the user can solve his/her own problems.
 
The algorithms considered in the comparison use the default parameter values
reported in the corresponding papers or packages. All implementations were executed on a 
Toshiba Satellite Pro L750-176 laptop with Intel Core i7-2670QM Processor and 8 GB RAM.

% ########################################################
\subsection{Experiment with artificial data} 
In this subsection we deal with solving the problem (\ref{e.gfun}) with the objective functions
\lbeq{e.prob}
\begin{array}{ll}
\vspace{1mm} f(x)= \frac{1}{2} \|Ax-b\|_2^2 + \frac{1}{2} \|x\|_2^2 &~~~ (L22L22R),\\
\vspace{1mm} f(x)= \frac{1}{2} \|Ax-b\|_2^2 + \|x\|_1               &~~~ (L22L1R),\\
\vspace{1mm} f(x)= \|Ax-b\|_1 + \frac{1}{2} \|x\|_2^2               &~~~ (L1L22R),\\
             f(x)= \|Ax-b\|_1 + \|x\|_1                             &~~~ (L1L1R).\\
\end{array}
\eeq
The problem is generated by
\[
\mathtt{[A,z,x] = i\_laplace(n),~~~ b = z + 0.1*rand,}
\]
where $n$ is the problem dimension and $\mathtt{i\_laplace.m}$ is a code generating an ill-posed test problem using the inverse Laplace transformation from the Regularization Tools MATLAB package, which is available at
\begin{center}
\url{http://www.imm.dtu.dk/~pcha/Regutools/}.
\end{center}
The upper and lower bounds on variables are generated by 
\[
\mathtt{\underline{x}=0.05 * ones(n)},~~~
\mathtt{\overline{x}=0.95 * ones(n)},
\]
respectively. Since among the problems (\ref{e.prob}) only 
L22L22R is differentiable, we need some nonsmooth algorithms to be compared with OSGA. In our experiment 
we consider two versions of OSGA, i.e., a version uses BCSS for solving the subproblem (\ref{e.sub}) (OSGA-1) and a version uses the inexact solution described in Subsection 3.2 for solving the subproblem (\ref{e.sub}) (OSGA-2), compared with PSGA-1 (a projected subgradient algorithm with nonsummable diminishing step-size), and PSGA-2 (a projected subgradient algorithm with nonsummable diminishing steplength), see \cite{BoyXM}.

We solve all of the above-mentioned problems with the dimensions $n = 2000$ and $n = 5000$. The results for L22L22R and L22L1R are illustrated in Table 1 and Figure 1, and the results for L1L22R and L1L1R are summarized in Table 2 and Figure 2. More precisely, Figures 1 and 2 show the relative error of function vales versus iterations
\lbeq{e.delta}
\delta_k := (f_k - \wh f)/(f_0 - \wh f),
\eeq
where $\wh f$ denotes the minimum and $f_0$ shows the function value on an initial point $x_0$. In our experiments, PSGA-1 and PSGA-2 exploit the step-sizes $\alpha := 5/\sqrt{k}\|g_k\|$ and $\alpha := 0.1/\sqrt{k}$, respectively, in which $k$ is the iteration counter and $g_k$ is a subgradient of $f$ at $x_k$. The algorithms are stopped after 100 iterations.

%%%%%%%%%%%%%%%%%%%
\begin{table}[htbp]
\caption{Result summary for L22L22R and L22L1R}
\begin{center}\footnotesize
\renewcommand{\arraystretch}{1.3}
\begin{tabular}{|l|l|l|l|l|l|l|}\hline
\multicolumn{1}{|l|}{} & \multicolumn{1}{l|}{{\bf problem}}& \multicolumn{1}{l|}{{\bf dimension}}
&\multicolumn{1}{l|}{{\bf PSGA-1}} & \multicolumn{1}{l|}{{\bf PSGA-2}} &\multicolumn{1}{l|}{{\bf OSGA-1}} & \multicolumn{1}{l|}{{\bf OSGA-2}} \\ 
\hline
{\bf $f_b$} & L22L22R & $n= 2000$ & 81.8427 & 77.1302 & 77.1285 & 77.1285\\
{\bf Time } &  &                    & 0.74 & 0.75 & 4.15 & 3.08\\
\hline
{\bf $f_b$} & L22L22R & $n = 5000$ & 4.7561e+2 & 4.2646e+2 & 4.2640e+2 & 4.2645e+2\\
{\bf Time } &  &                    & 3.67 & 3.58 & 14.09 & 7.57\\
\hline
{\bf $f_b$} & L22L1R & $n= 2000$ & 1.8922e+2 & 1.8827e+2& 1.8682e+2 & 1.2367e+2\\
{\bf Time } &  &                    & 0.67 & 0.61 & 3.91 & 1.21\\
\hline
{\bf $f_b$} & L22L1R & $n= 5000$ & 7.0679e+2 & 6.8084e+2& 6.7887e+2 & 6.8064e+2\\
{\bf Time } &  &                    & 3.72 & 3.42 & 14.20 & 7.61\\
\hline
\end{tabular}
\end{center}
\end{table}

%%%%%%%%%%%%%%%%%%%
\begin{table}[htbp]
\caption{Result summary for L1L22R and L1L1R}
\begin{center}\footnotesize
\renewcommand{\arraystretch}{1.3}
\begin{tabular}{|l|l|l|l|l|l|l|}\hline
\multicolumn{1}{|l|}{} & \multicolumn{1}{l|}{{\bf problem}}& \multicolumn{1}{l|}{{\bf dimension}}
&\multicolumn{1}{l|}{{\bf PSGA-1}} & \multicolumn{1}{l|}{{\bf PSGA-2}} &\multicolumn{1}{l|}{{\bf OSGA-1}} & \multicolumn{1}{l|}{{\bf OSGA-2}} \\ 
\hline
{\bf $f_b$} & L1L22R & $n= 2000$ & 1.8981e+2 & 1.9420e+2 & 1.8671e+2 & 1.8676e+2\\
{\bf Time } &  &                    & 0.69 & 0.75 & 4.04 & 2.73\\
\hline
{\bf $f_b$} & L1L22R & $n = 5000$ & 1.9256e+2 & 3.4612e+2 & 1.6971e+2 & 1.6995e+2\\
{\bf Time } &  &                    & 3.69 & 3.57 & 14.06 & 6.83\\
\hline
{\bf $f_b$} & L1L1R & $n= 2000$ & 2.6713e+2 & 2.6936e+2& 2.6536e+2 & 2.6703e+2\\
{\bf Time } &  &                    & 0.76 & 0.75 & 4.27 & 3.02\\
\hline
{\bf $f_b$} & L1L1R & $n= 5000$ & 6.9728e+2 & 7.4536e+2& 6.9411e+2 & 6.9687e+2\\
{\bf Time } &  &                    & 3.68 & 3.57 & 14.46 & 7.46\\
\hline
\end{tabular}
\end{center}
\end{table}

% #############################
\begin{figure}[h] \label{f.l22}
\centering
\subfloat[][L22L22R, $n=2000$]{\includegraphics[width=6.1cm]{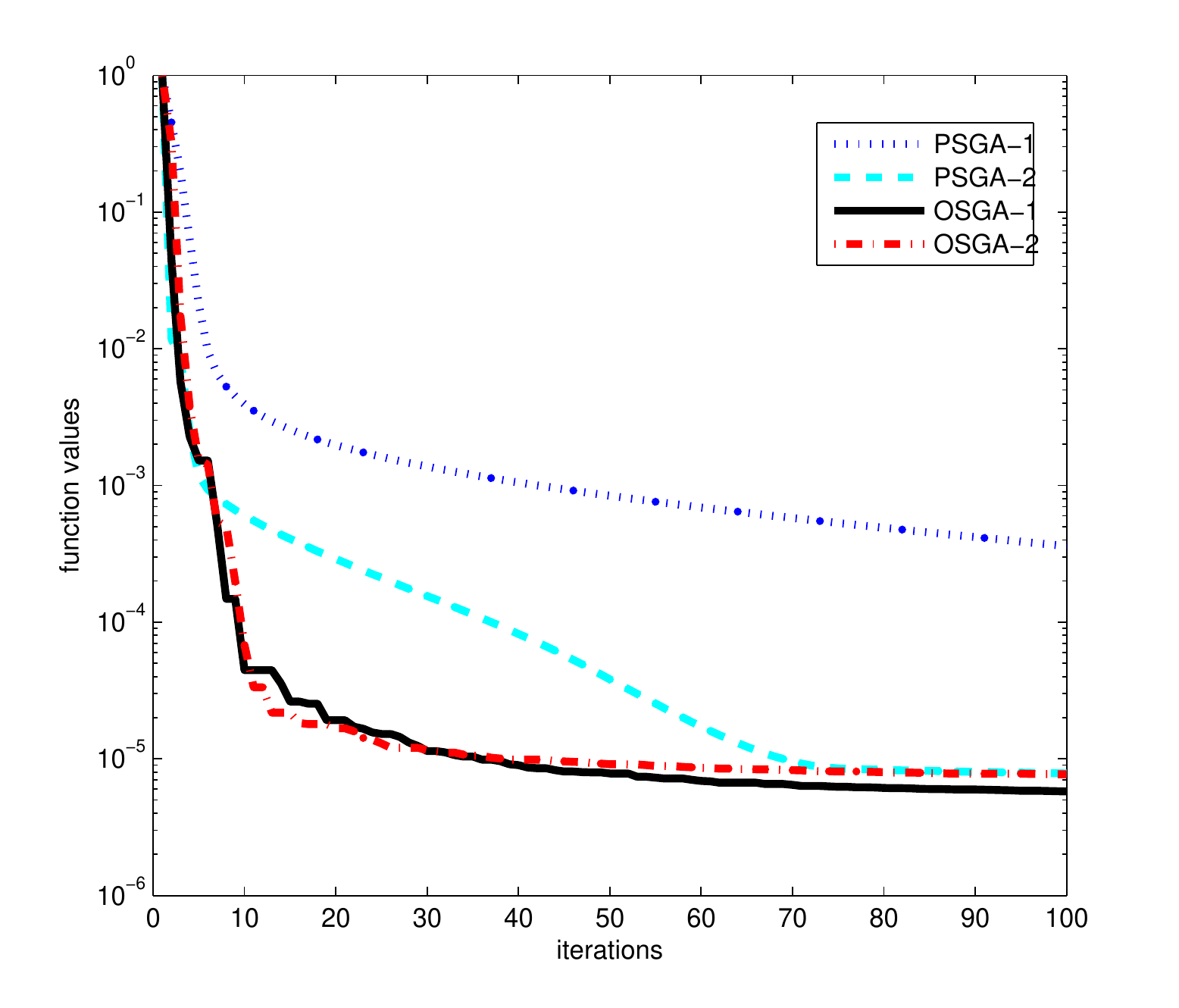}}%
\qquad
\subfloat[][L22L22R, $n=5000$]{\includegraphics[width=6.1cm]{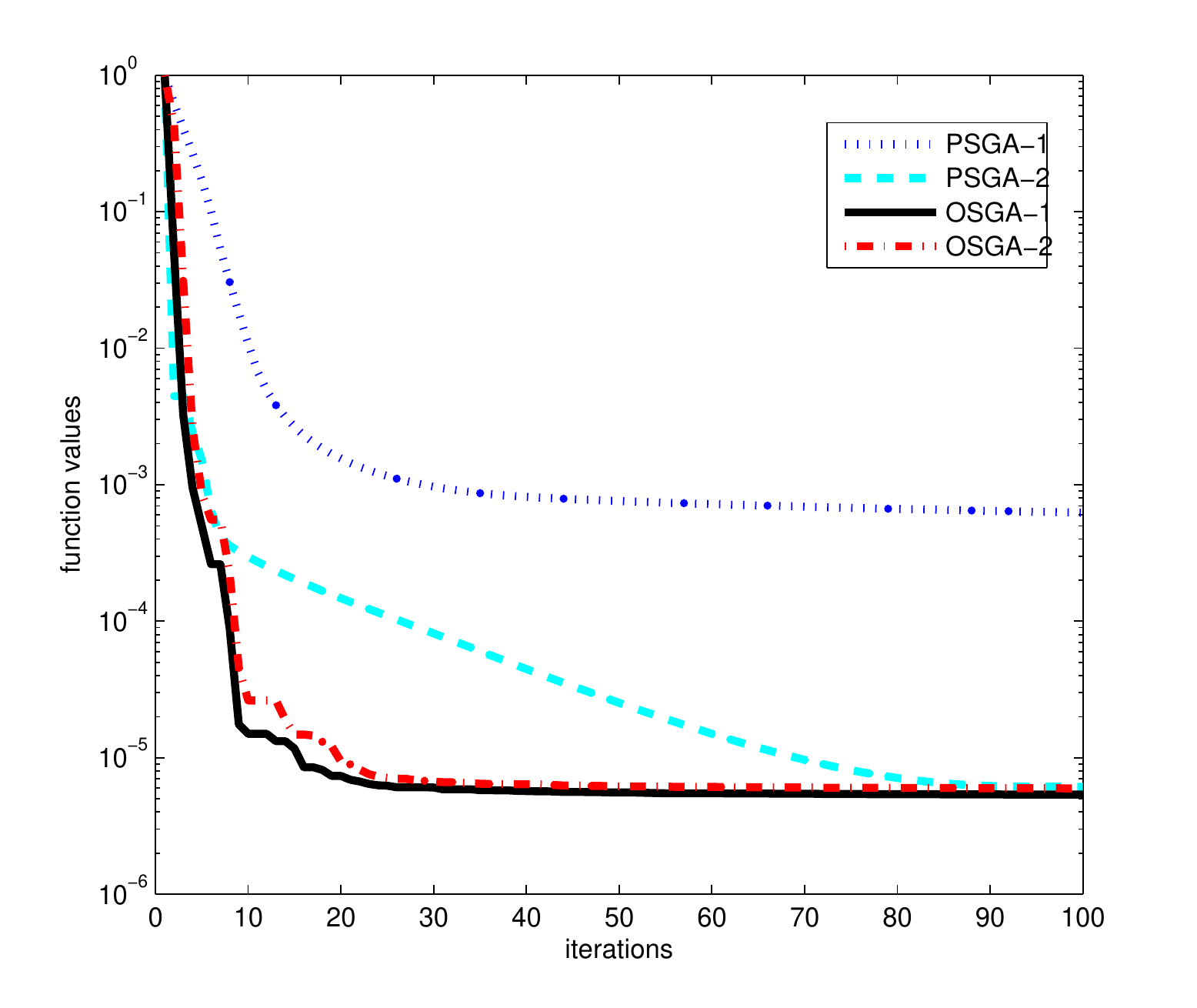}}
\qquad
\subfloat[][L22L1R, $n=2000$]{\includegraphics[width=6.1cm]{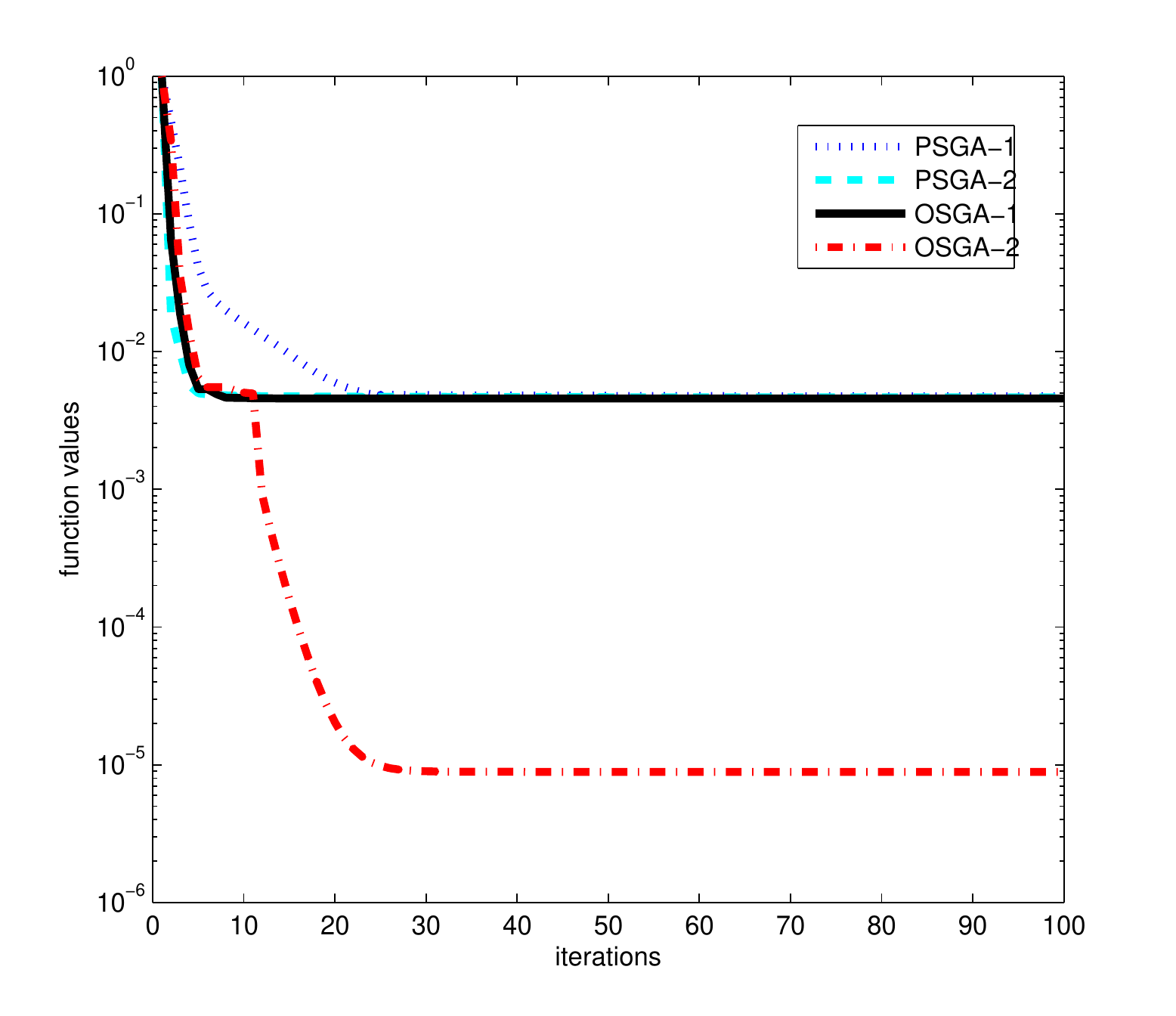}}%
\qquad
\subfloat[][L22L1R, $n=5000$]{\includegraphics[width=6.1cm]{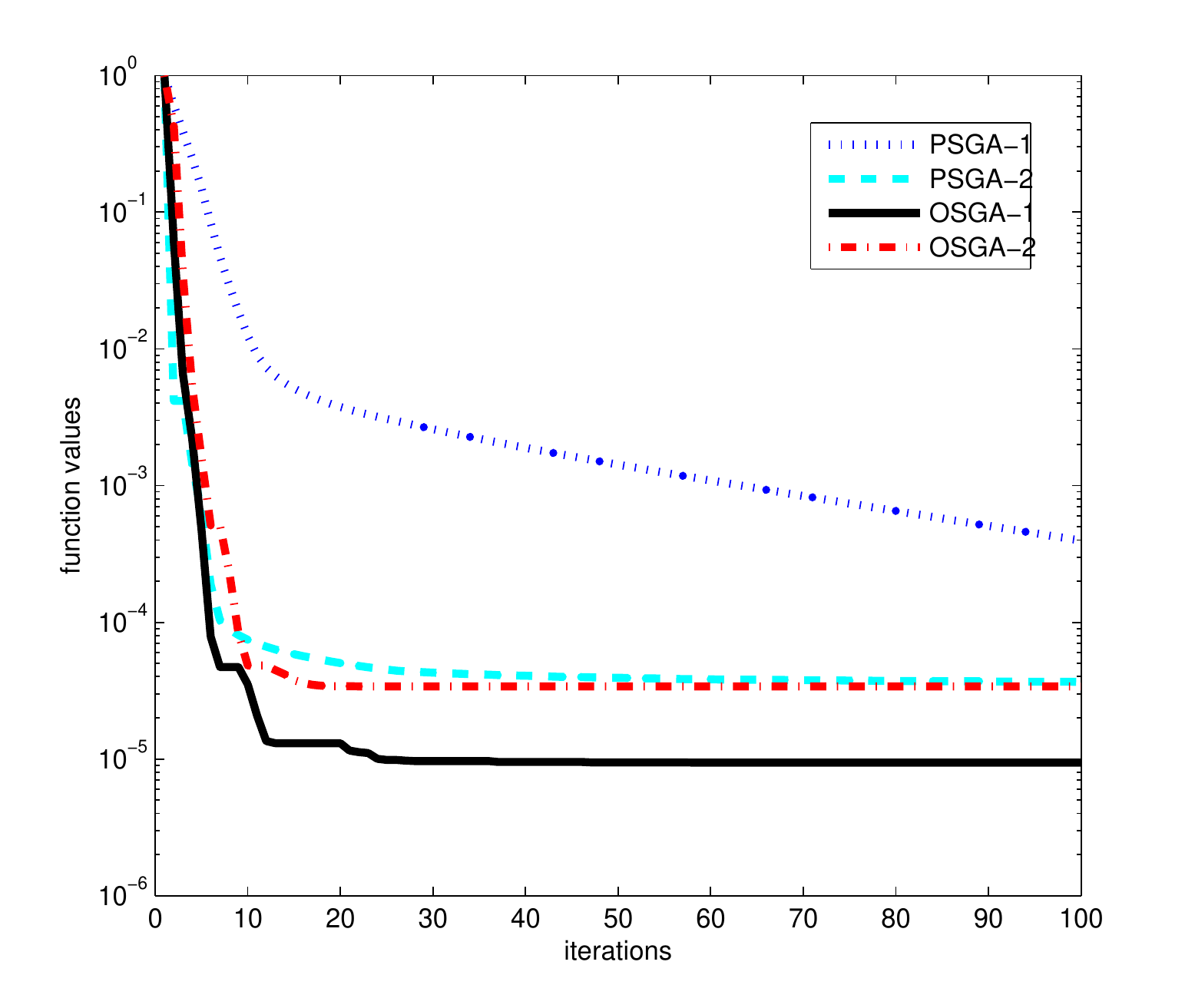}}

\caption{A comparison among PSGA-1, PSGA-2, OSGA-1, and OSGA-2 for solving L22L22R and L22L1R, where the algorithms were stopped after 100 iterations.}
\end{figure}

% ############################
\begin{figure}[h] \label{f.l1}
\centering
\subfloat[][L1L22R, $n=2000$]{\includegraphics[width=6.1cm]{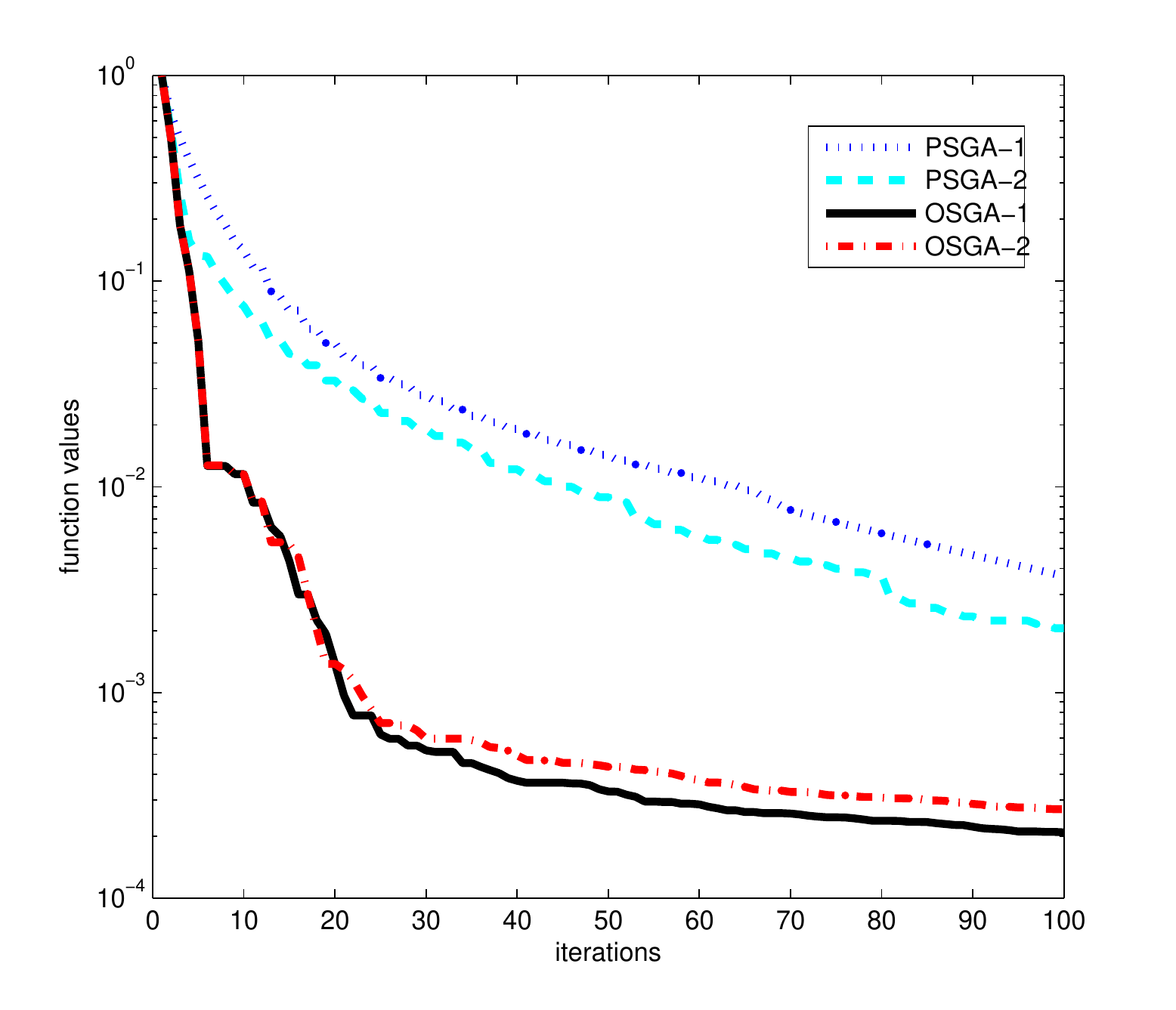}}%
\qquad
\subfloat[][L1L22R, $n=5000$]{\includegraphics[width=6.1cm]{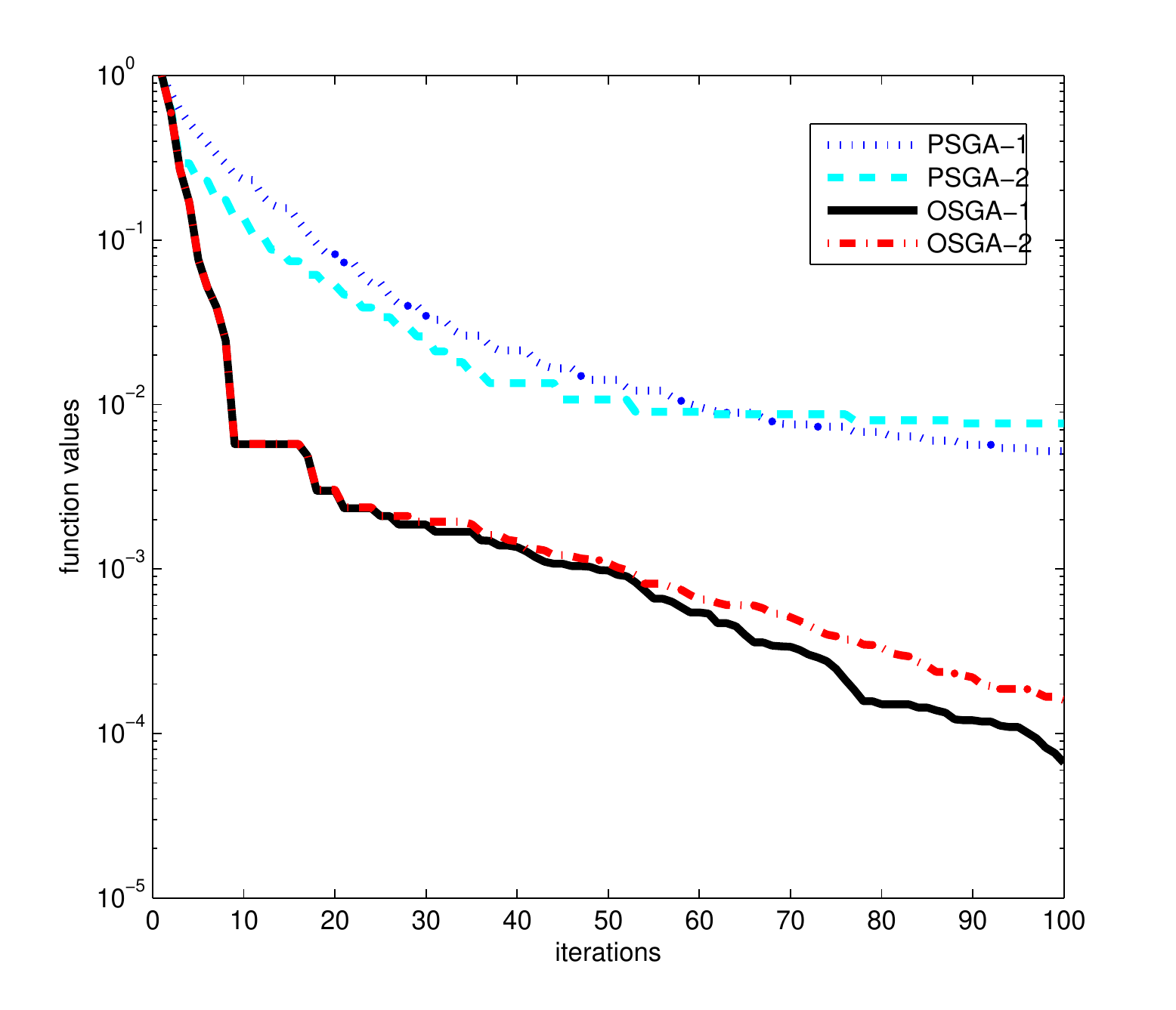}}
\qquad
\subfloat[][L1L1R, $n=2000$]{\includegraphics[width=6.1cm]{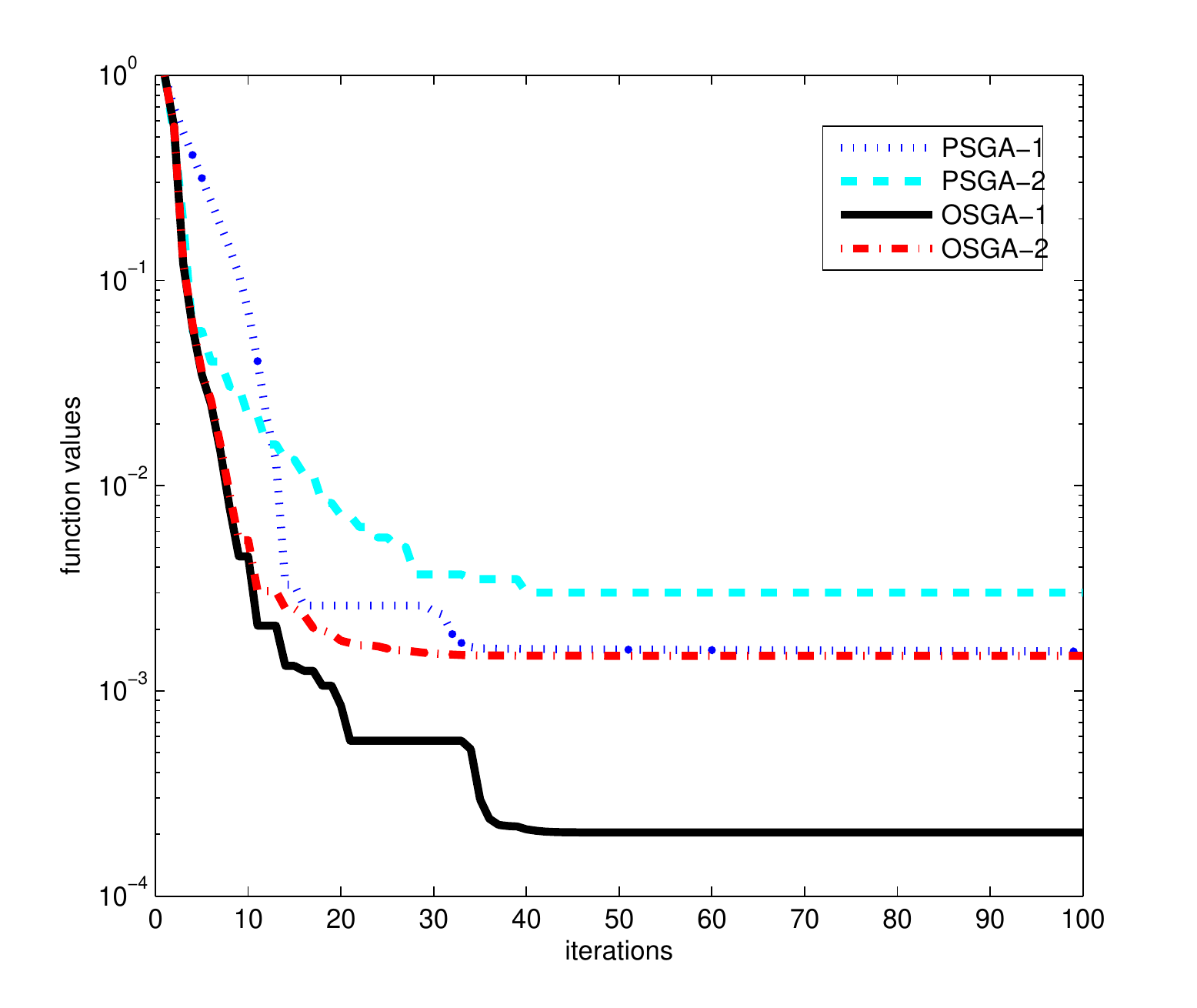}}%
\qquad
\subfloat[][L1L1R, $n=5000$]{\includegraphics[width=6.1cm]{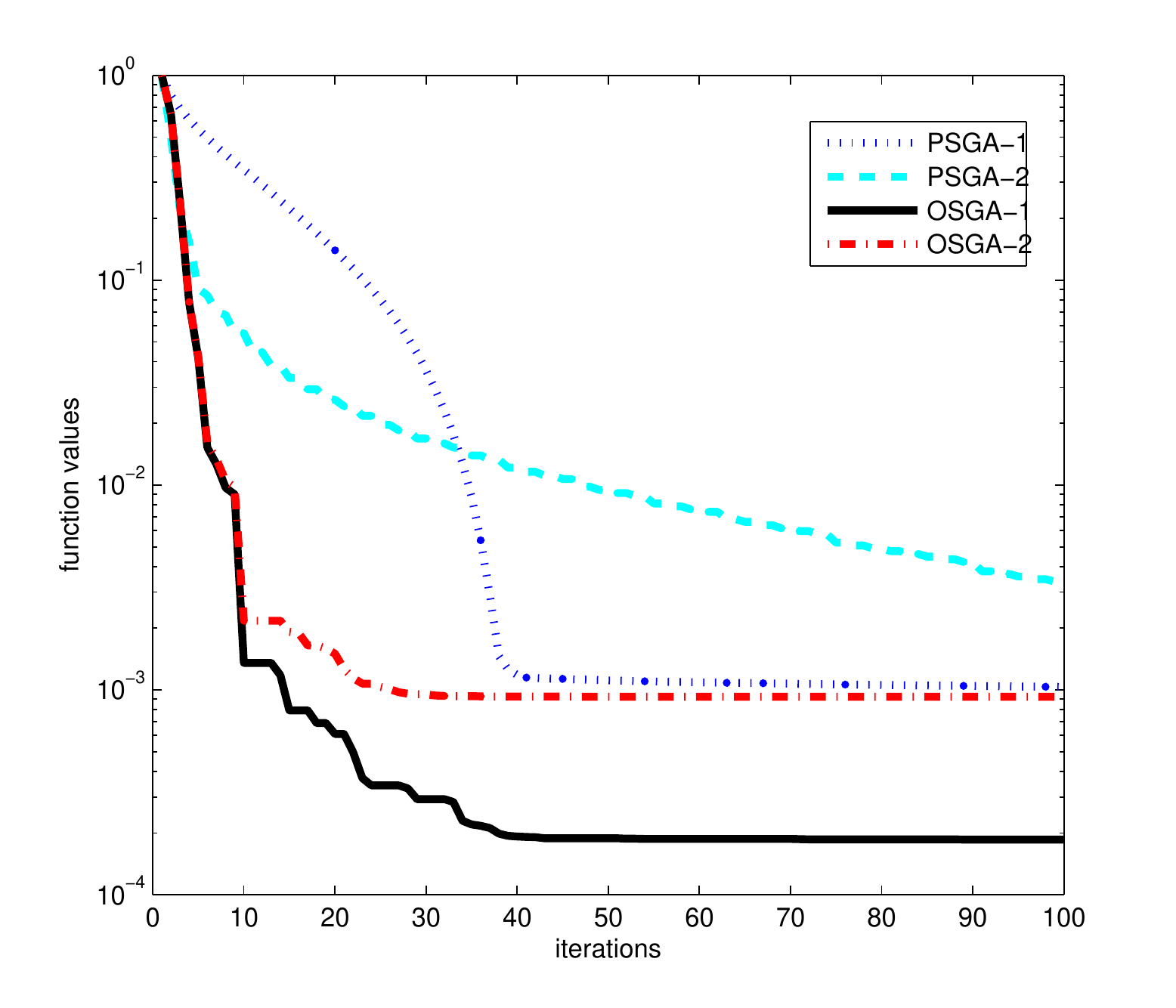}}

\caption{A comparison among PSGA-1, PSGA-2, OSGA-1, and OSGA-2 for solving L1L22R and L1L1R, where the algorithms were stopped after 100 iterations.}
\end{figure}

\vspace{5mm}
In Figure 1, Subfigures (a) and (b) show that OSGA-1 and OSGA-2  substantially outperform PSGA-1 and PSGA-2 with respect to the relative error of function values $\delta_k$ (\ref{e.delta}), however, they need more running time. In this case OSGA-1 and OSGA-2 are competitive, while OSGA-1 performs better. In Figure 1, Subfigure (c) shows that OSGA-2 produces the best results and the others are competitive. Subfigure (d) of Figure 1 demonstrates that OSGA-1 attains the best results and SPGA-2 and OSGA-2 are competitive but much better than SPGA-1.
In Figure 2, Subfigures (a) and (b) show that OSGA-1 and OSGA-2 are comparable but much better that PSGA-1 and PSGA-2. Subfigures (c) and (s) show that the best results produced by OSGA-1 and OSGA-2, respectively.

% ########################################################
\subsection{Image deblurring/denoising}
Image deblurring/denoising is one of the fundamental tasks in the 
context of digital imaging processing, aiming at recovering an image from
a blurred/noisy observation. The problem is typically modelled as linear
inverse problem 
\begin{equation}\label{e.inv}
y = Ax + \omega,~~~ x \in \mathcal{V},
\end{equation}
where $\mathcal{V}$ is a finite-dimensional vector space, $A$ is a blurring linear operator, $x$ is a clean image, $y$ is an 
observation, and $\omega$ is either Gaussian or impulsive noise. 

The system of equation (\ref{e.inv}) is usually underdetermined and ill-conditioned, 
and $\omega$ is not commonly available, so it is not possible to solve it directly, see \cite{NeuI}. 
Hence the solution is generally approximated by an optimization problem of the form  
\begin{equation}\label{e.least}
\begin{array}{ll}
\min &~~ \D \frac{1}{2} \| Ax - b \|_2^2 + \lambda \varphi(x)\\
\st  &~~ x \in \mathcal{V},
\end{array}
\end{equation}
or
\begin{equation}\label{e.l1}
\begin{array}{ll}
\min &~~ \D \| Ax - b \|_1 + \lambda \varphi(x)\\
\st  &~~ x \in \mathcal{V},
\end{array}
\end{equation}
where $\varphi$ is a smooth or 
nonsmooth regularizer such as $\varphi(x) = \frac{1}{2} \|x\|_2^2$,
$\varphi(x) = \|x\|_1$, $\varphi(x) = \|x\|_{ITV}$, and $\varphi(x) = \|x\|_{ATV}$. Among the various regularizers, 
the total variation is much more popular due to its strong edge 
preserving feature. Two important types of the total variation, namely isotropic
and anisotropic, see \cite{ChaCCNP}, are defined for $x \in \Rz^{m\times n}$ by
\[
    \bary{lll}
    \|x\|_{ITV} &=& \sum_i^{m-1} \sum_j^{n-1} \sqrt{(x_{i+1,j} - x_{i,j})^2+(x_{i,j+1} - x_{i,j})^2 }\\
            &+& \sum_i^{m-1} |X_{i+1,n} - Xx_{i,n}| + \sum_i^{n-1} |x_{m,j+1} - x_{m,j}|
    \eary
\]
and
    \[
    \bary{lll}
    \|x\|_{ATV} &=& \sum_i^{m-1} \sum_j^{n-1} \{|x_{i+1,j} - x_{i,j}| + |x_{i,j+1} - x_{i,j}| \}\\
            &+& \sum_i^{m-1} |x_{i+1,n} - x_{i,n}| + \sum_i^{n-1} |x_{m,j+1} - x_{m,j}|,
    \eary
\]
respectively. 

The common drawback of the unconstrained problem (\ref{e.least}) is 
that it usually gives a solution outside of the dynamic range of the image, which is either $[0,1]$ or $[0,255]$ for 8-bit gray-scale images. 
Hence one has to project the unconstrained solution to the dynamic range of the image. However, the quality of the projected images is not always acceptable. As a result, it is worth to solve a 
bound-constrained problem of the form (\ref{e.bcls}) in place of the 
unconstrained problem (\ref{e.least}), where the bounds are defined by the dynamic range of the images, see \cite{BecT,ChaTY1,WooY}. 

The comparison concerning the quality of the recovered image is made via the so-called 
peak signal-to-noise ratio (PSNR) defined by 
\lbeq{e.psnr}
\mathrm{PSNR} = 20 \log_{10} \left( \frac{\sqrt{mn}}{\|x - x_t\|_F} \right)
\eeq
and the improvement in signal-to-noise ratio (ISNR) defined by
\lbeq{e.isnr}
\mathrm{ISNR} = 20 \log_{10} \left( \frac{\|y - x_t\|_F}{\|x - x_t\|_F} \right), 
\eeq
where $\|\cdot\|_F$ is the Frobenius norm, $x_t$ denotes the $m \times n$ true image,
$y$ is the observed image, and pixel values are in $[0, 1]$.

\vspace{5mm}
% ########################################################
\subsubsection{Experiment with $l_2^2$ isotropic total variation}
We here consider the image restoration from a blurred/noisy observation using 
the model (\ref{e.bcls}) equipped with the isotropic total variation regularizer. 
We employ OSGA, MFISTA (a monotone version of 
FISTA proposed by {\sc Beck \& Teboulle} in \cite{BecT}), ADMM (an alternating 
direction method proposed by {\sc Chan} et al. in \cite{ChaTY1}), and a projected 
subgradient algorithms PSGA (with nonsummable diminishing step-size, see \cite{BoyXM}). 
In our implementation we use the original code of MFISTA and ADMM provided by the authors, with minor adaptations to solve the problem form (\ref{e.bcls}) 
and to stop in a fixed number of iterations. 

We now restore the $512 \times 512$ blurred/noisy Barbara image. Let $y$ be a blurred/noisy version of this image generated by a $9 \times 9$ 
uniform blur and adding a Gaussian noise with zero mean and standard deviation 
set to $10^{-3/2}$. Our implementation shows that the algorithms are sensitive to the regularization parameter $\lambda$. 
Hence we consider three different regularization parameters 
$\lambda = 1 \times 10^{-2}$, $\lambda = 7 \times 10^{-3}$, and $\lambda = 4 \times 10^{-3}$. All algorithms are stopped after 50 iterations. 
The results of our implementation are summarized in Table 3 and Figures 3 and 4. 

%%%%%%%%%%%%%%%%%%%
\begin{table}[htbp]
\caption{Result summary for the $l_2^2$ isotropic total variation}
\begin{center}\footnotesize
\renewcommand{\arraystretch}{1.3}
\begin{tabular}{|l|l|l|l|l|l|}\hline
\multicolumn{1}{|l|}{} & \multicolumn{1}{l|}{{\bf $\lambda$}}
&\multicolumn{1}{l|}{{\bf PSGA}} & \multicolumn{1}{l|}{{\bf MFISTA}}
&\multicolumn{1}{l|}{{\bf ADMM}} & \multicolumn{1}{l|}{{\bf OSGA}} \\ 
\hline
{\bf PSNR}  &                    & 23.69 & 23.64 & 23.59 & 23.74\\
{\bf $f_b$} & $1 \times 10^{-2}$ & 1.6804e+2 & 1.6580e+2 & 1.6705e+2 & 1.6531e+2\\
{\bf Time } &                    & 2.15 & 30.55 & 2.05 & 5.60\\
\hline
{\bf PSNR}  &                    & 23.73 & 23.66 & 23.67 & 23.76\\
{\bf $f_b$} & $7 \times 10^{-3}$ & 1.5694e+2 & 1.5543e+2 & 1.5599e+2 & 1.5500e+2\\
{\bf Time } &                    & 2.09 & 31.31 & 2.12 & 5.39\\
\hline
{\bf PSNR}  &                    & 23.77 & 23.67 & 23.63 & 23.77\\
{\bf $f_b$} & $4 \times 10^{-3}$ & 1.4402e+2 & 1.4321e+2& 1.4329e+2 & 1.4294e+2\\
{\bf Time } &                    & 2.09 & 31.93 & 2.06 & 5.49\\
\hline
\end{tabular}
\end{center}
\end{table}

The results of Table 3 and Figure 3 show that function values, PSNR, and ISNR produced by the algorithms are sensitive to the regularization parameter $\lambda$. 
However, the function values are less sensitive.
Subfigures (a), (c), and (e) show that OSGA gives the best performance in terms of function values. According to subfigures (b), (d), and (f), 
the best ISNR is attained for $\lambda = 4 \times 10^{-3}$, and the algorithms
are comparable with each other, but OSGA outperforms the others slightly. Figure 4 illustrates
the resulting deblurred images for $\lambda = 4 \times 10^{-3}$.

%%%%%%%%%%%%%%%%%%%%%%%%%%%%%
\begin{figure} \label{f.deb1}
\centering
\subfloat[][$\delta_k$ versus iterations, $\lambda = 1 \times 10^{-3}$]{\includegraphics[width=6.1cm]{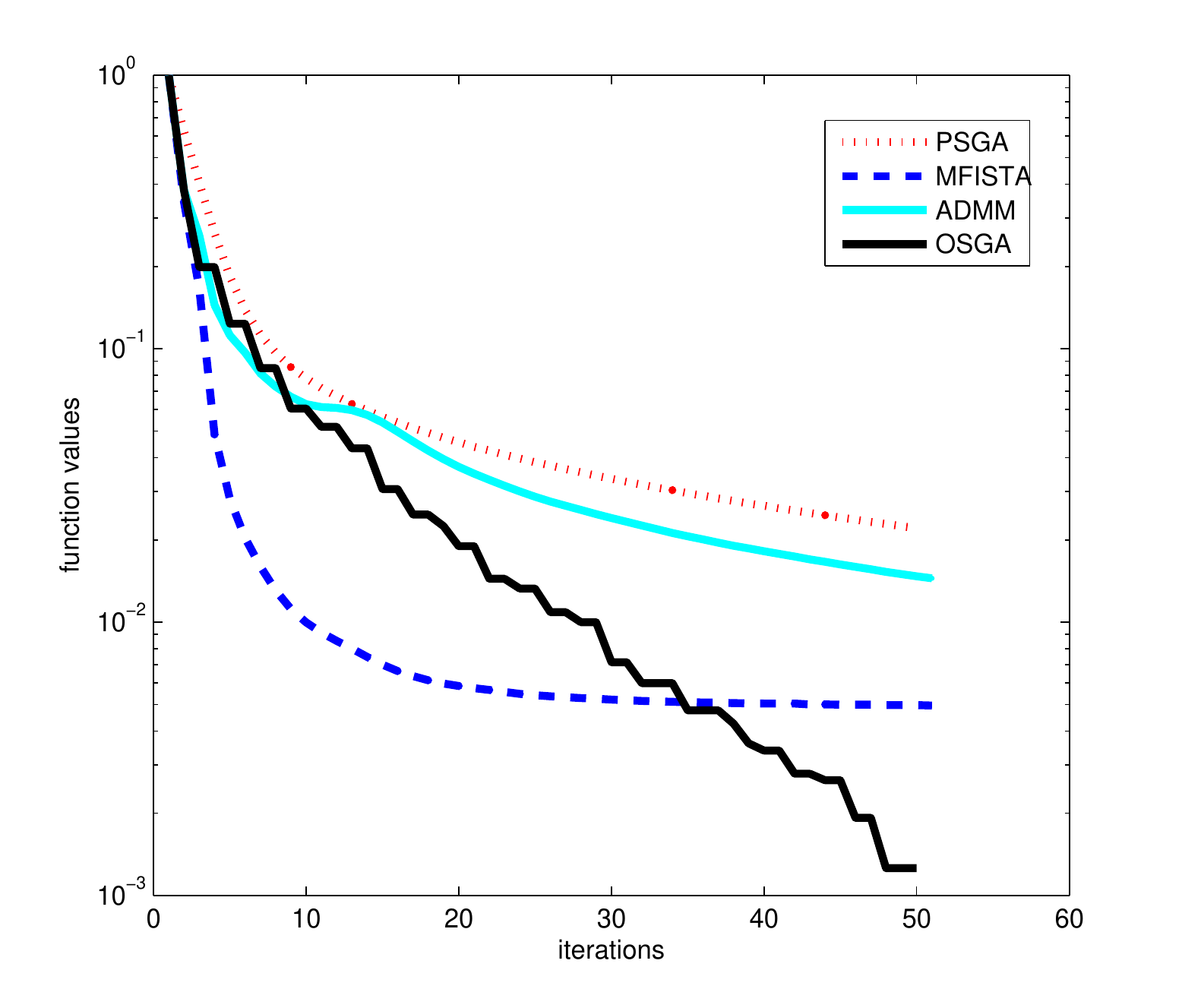}}%
\qquad
\subfloat[][ISNR versus iterations, $\lambda = 1 \times 10^{-3}$]{\includegraphics[width=6.1cm]{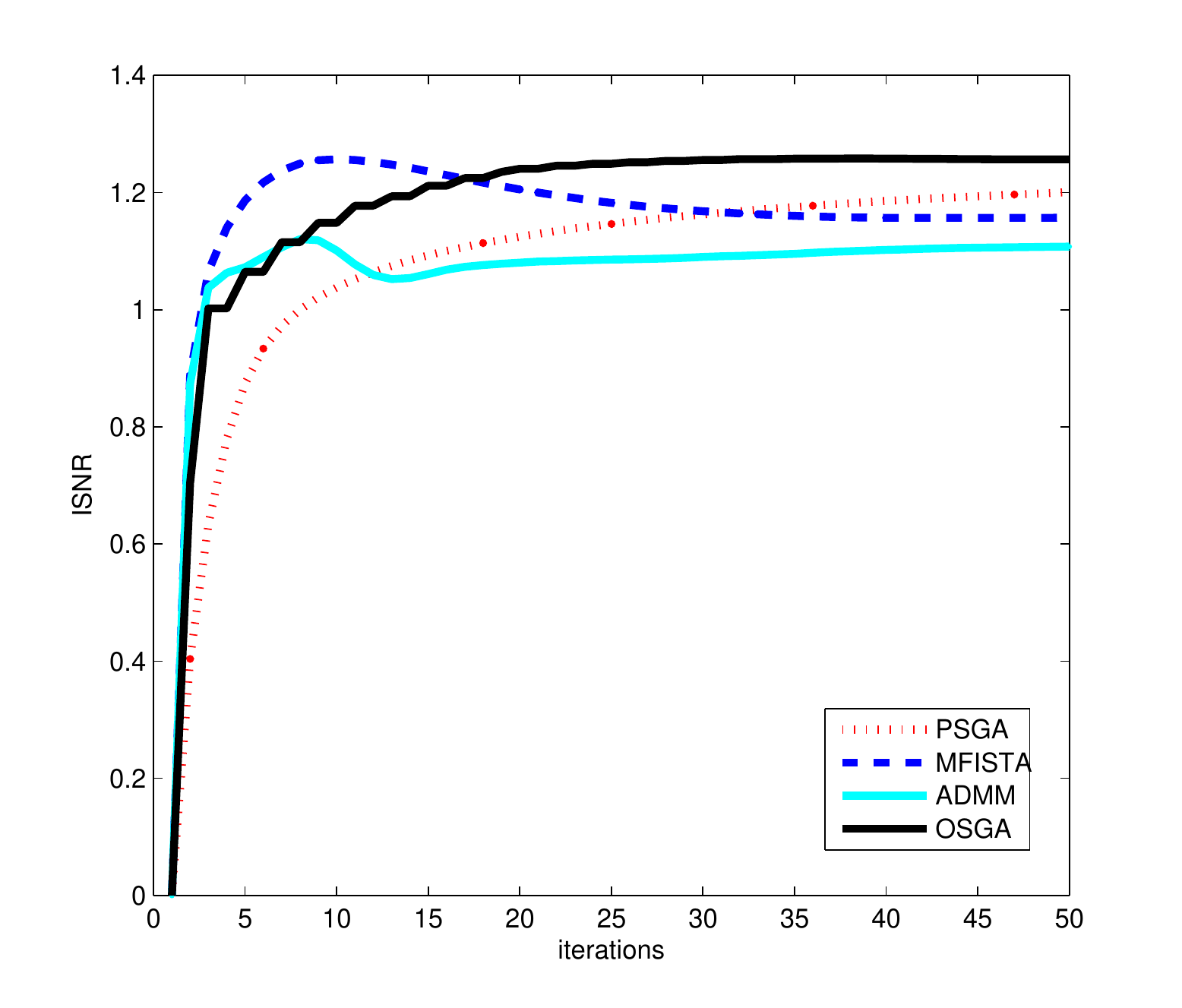}}
\qquad
\subfloat[][$\delta_k$ versus iterations, $\lambda = 6 \times 10^{-4}$]{\includegraphics[width=6.1cm]{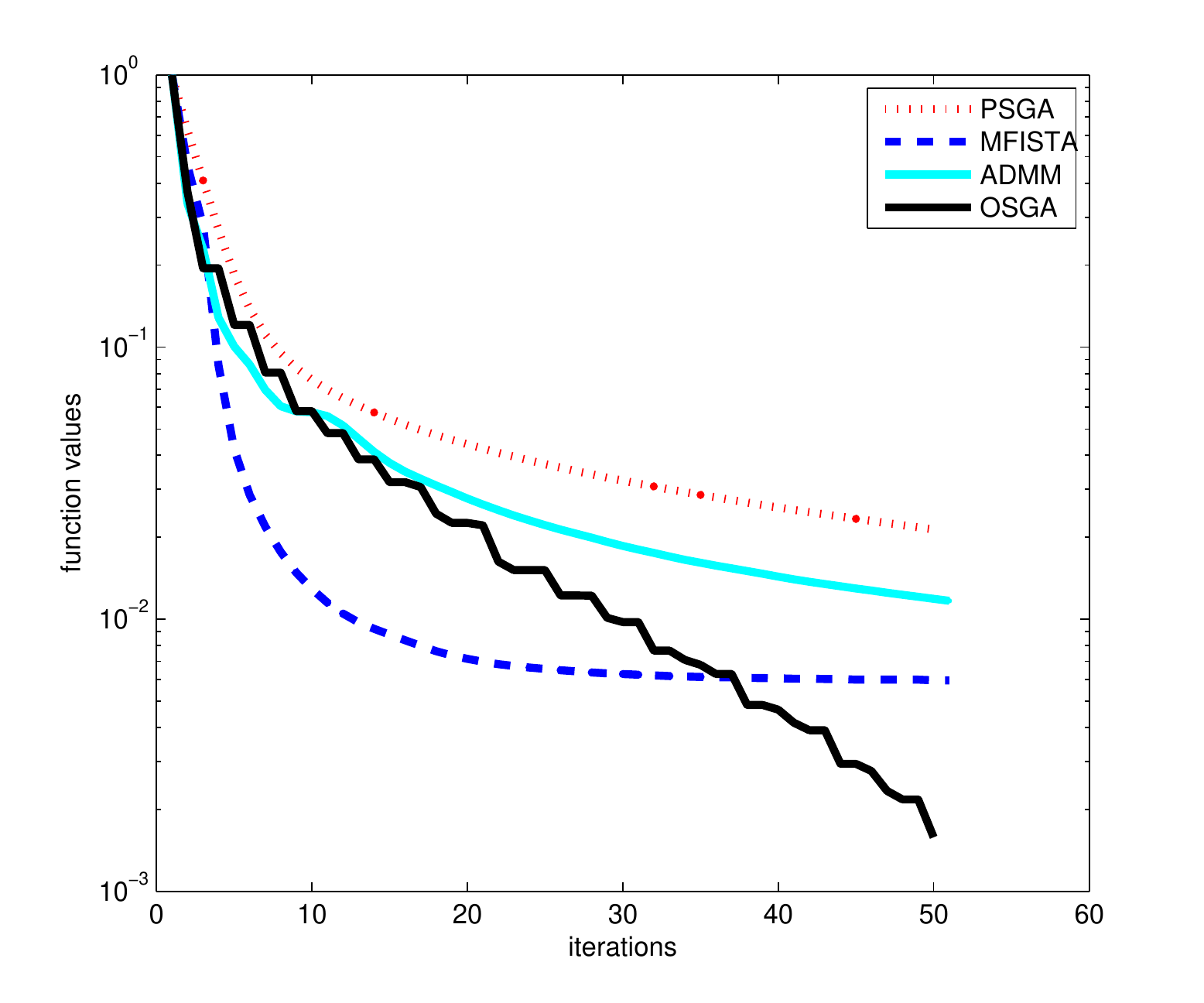}}%
\qquad
\subfloat[][ISNR versus iterations, $\lambda = 6 \times 10^{-4}$]{\includegraphics[width=6.1cm]{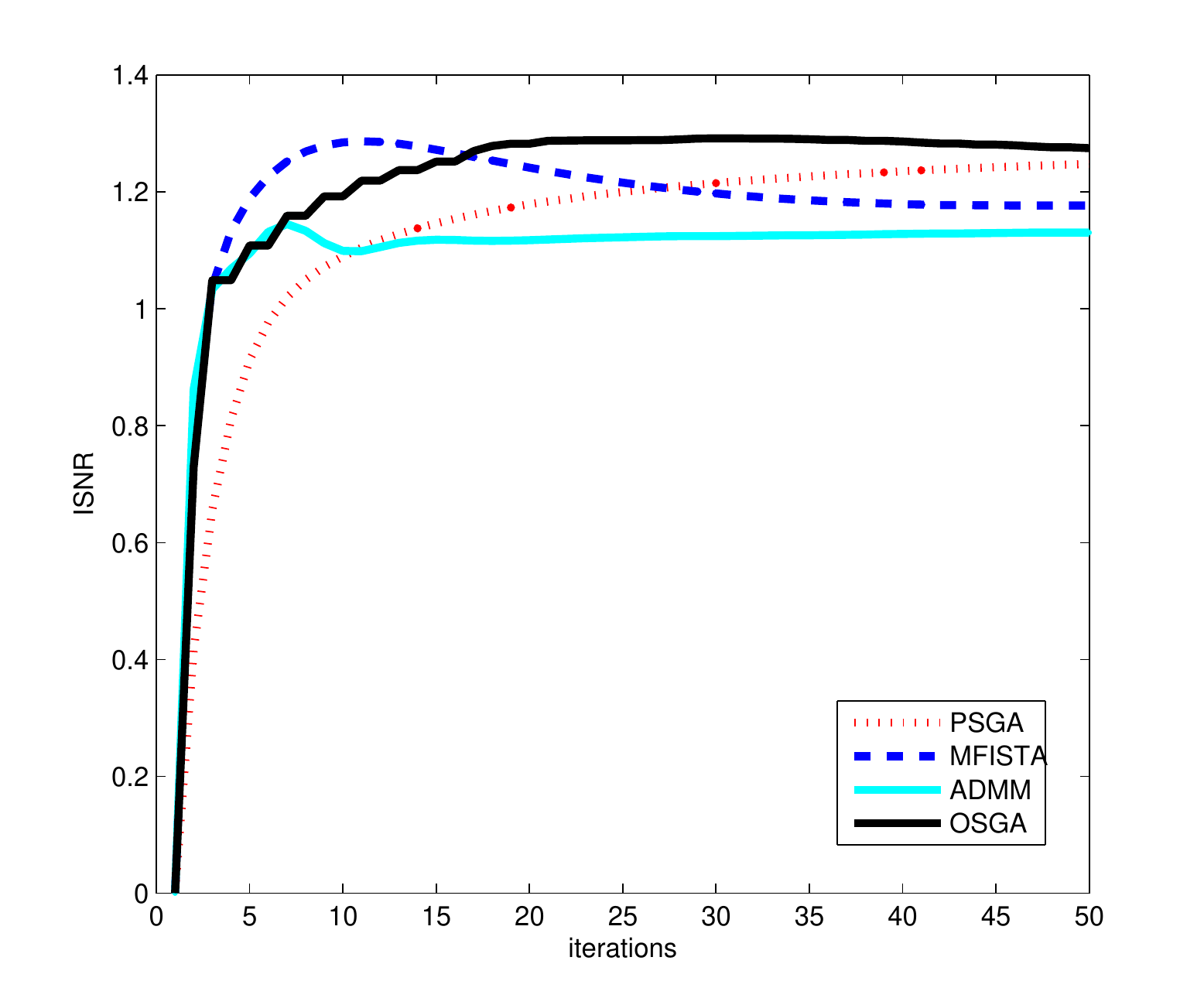}}
\qquad
\subfloat[][$\delta_k$ versus iterations, $\lambda = 4 \times 10^{-4}$]{\includegraphics[width=6.1cm]{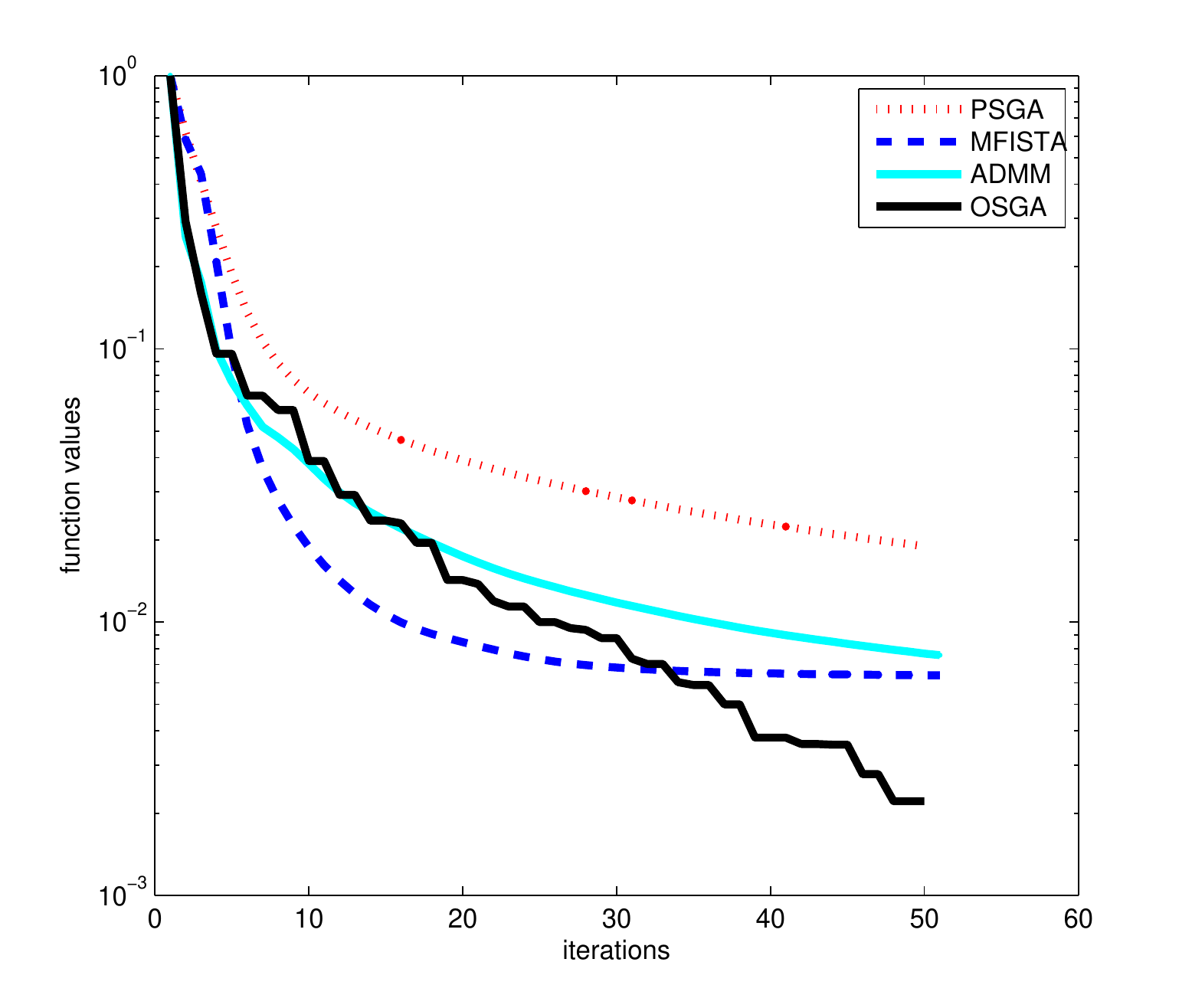}}%
\qquad
\subfloat[][ISNR versus iterations, $\lambda = 4 \times 10^{-4}$]{\includegraphics[width=6.1cm]{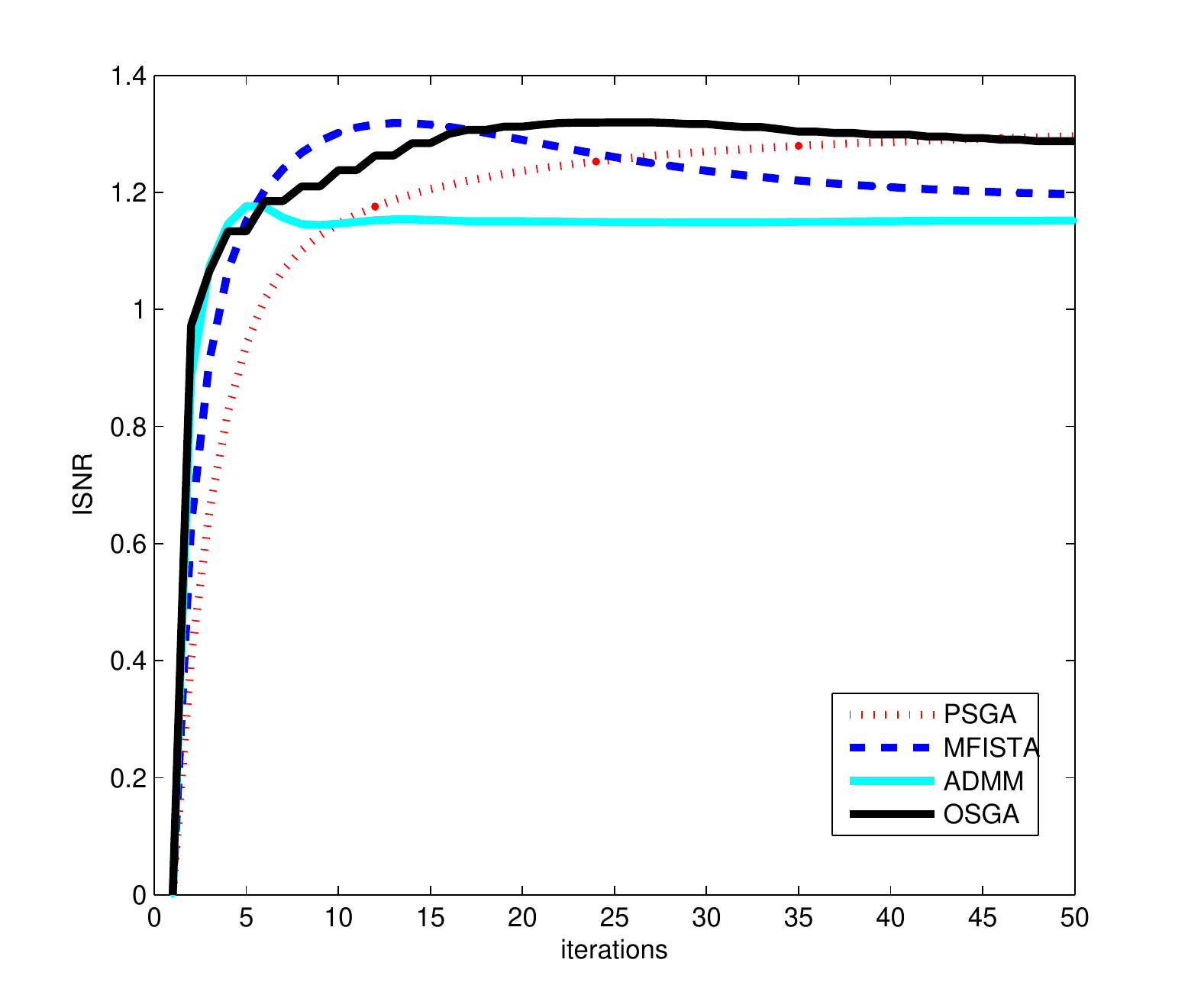}}

\caption{A comparison among PSGA, MFISTA, ADMM, and OSGA for deblurring the $512 \times 512$ Barbara image with the $9 \times 9$ uniform blur and the Gaussian noise with deviation $10^{-3/2}$. The algorithms were stopped after 50 iterations. The subfigures (a), (c) and, (e) display the relative error $\delta_k$ (\ref{e.delta}) of function values versus iterations, and (b), (d), and (f) show ISNR (\ref{e.isnr}) versus iterations.}
\end{figure}

%%%%%%%%%%%%%%%%%%%%%%%%%%%%%
\begin{figure} \label{f.deb2}
\centering
\subfloat[][Original image]{\includegraphics[width=6.1cm]{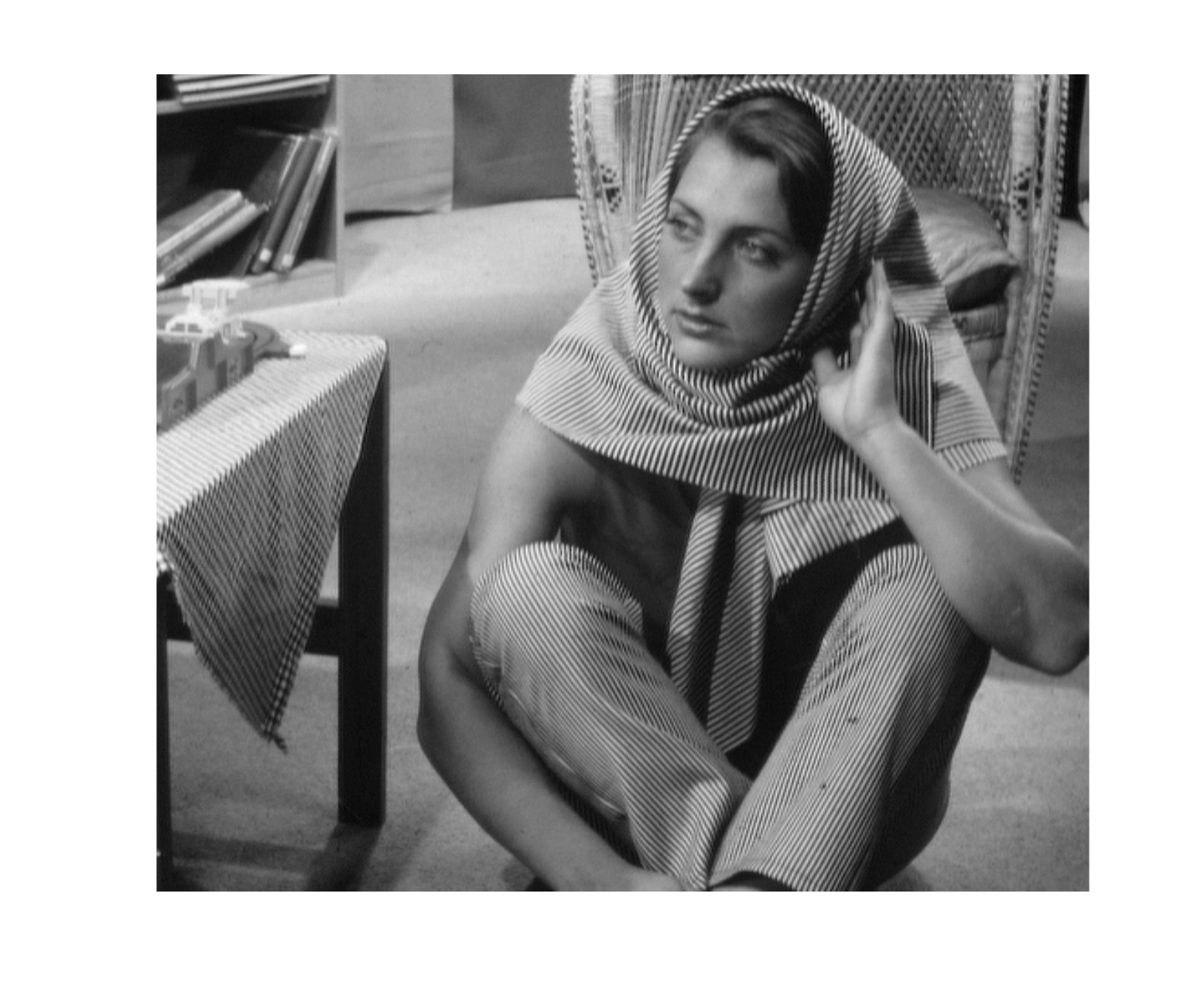}}%
\qquad
\subfloat[][Blurred/noisy image]{\includegraphics[width=6.1cm]{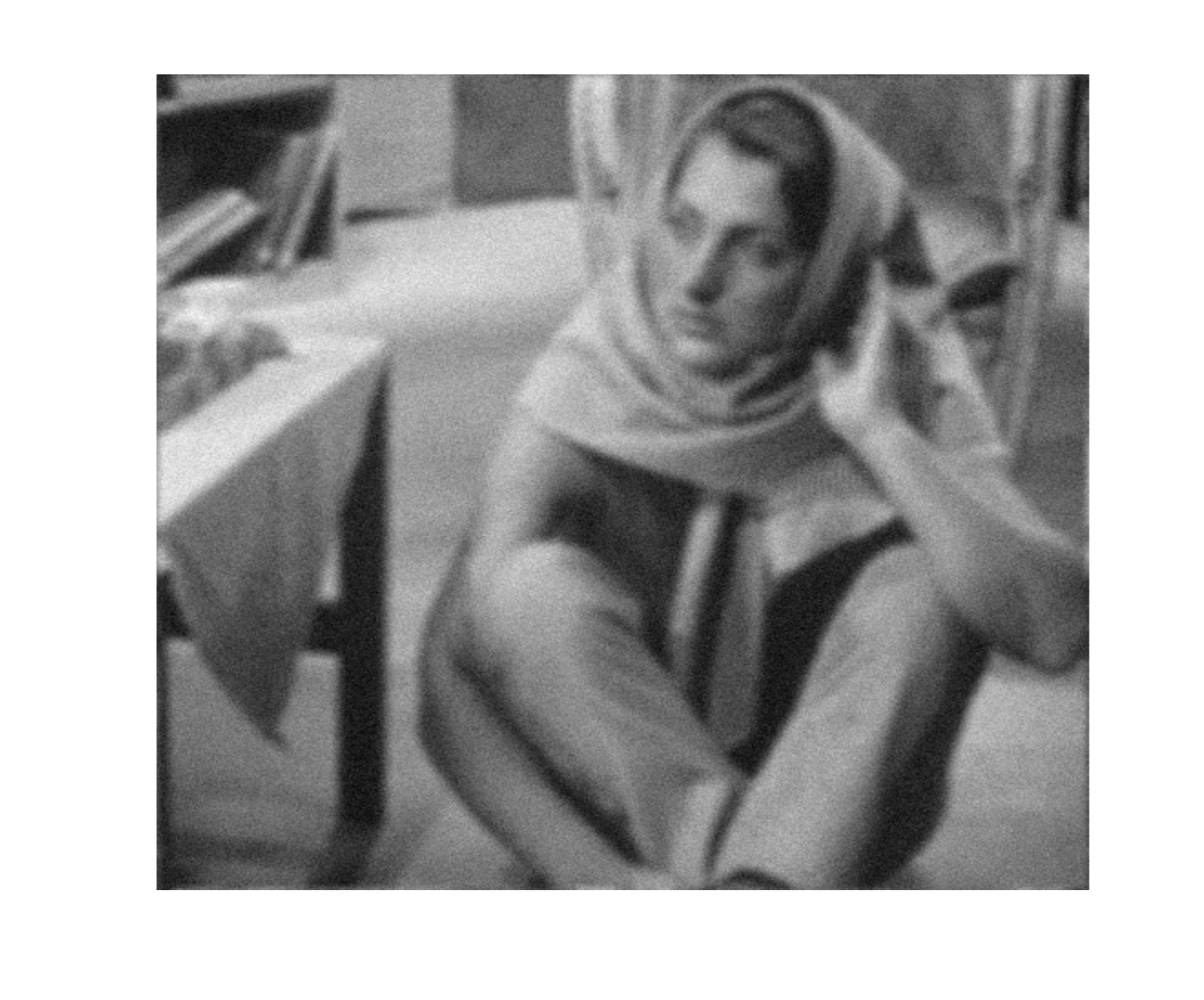}}
\qquad
\subfloat[][PSGA: $f = 1.4402e+2, \mathrm{PSNR} = 23.77, \mathrm{T} = 2.09$]{\includegraphics[width=6.1cm]{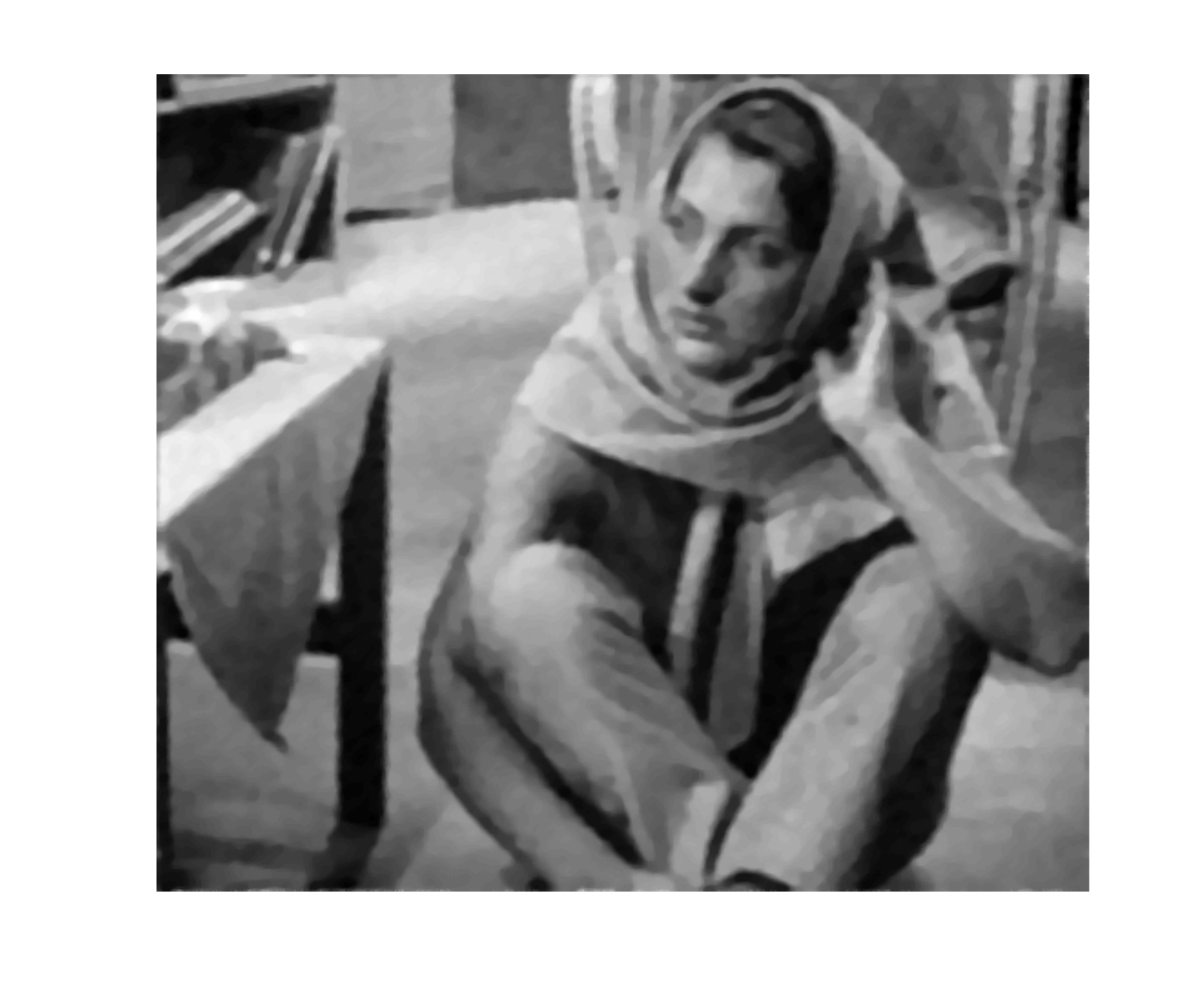}}%
\qquad
\subfloat[][MFISTA: $f = 1.4321e+2, \mathrm{PSNR} = 23.67, \mathrm{T} = 31.93$]{\includegraphics[width=6.1cm]{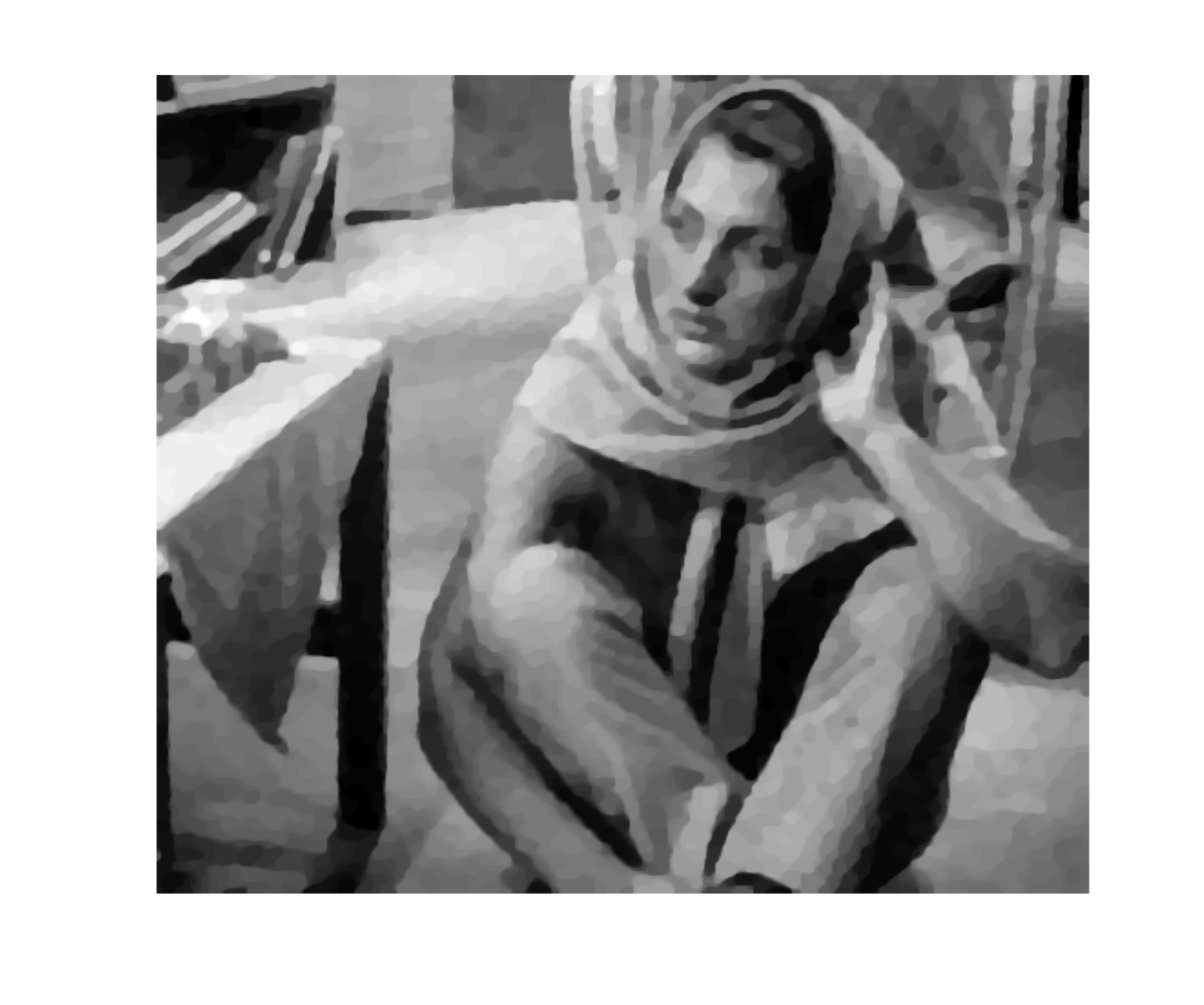}}
\qquad
\subfloat[][ADMM: $f = 1.4329e+2, \mathrm{PSNR} = 23.63, \mathrm{T} = 2.06$]{\includegraphics[width=6.1cm]{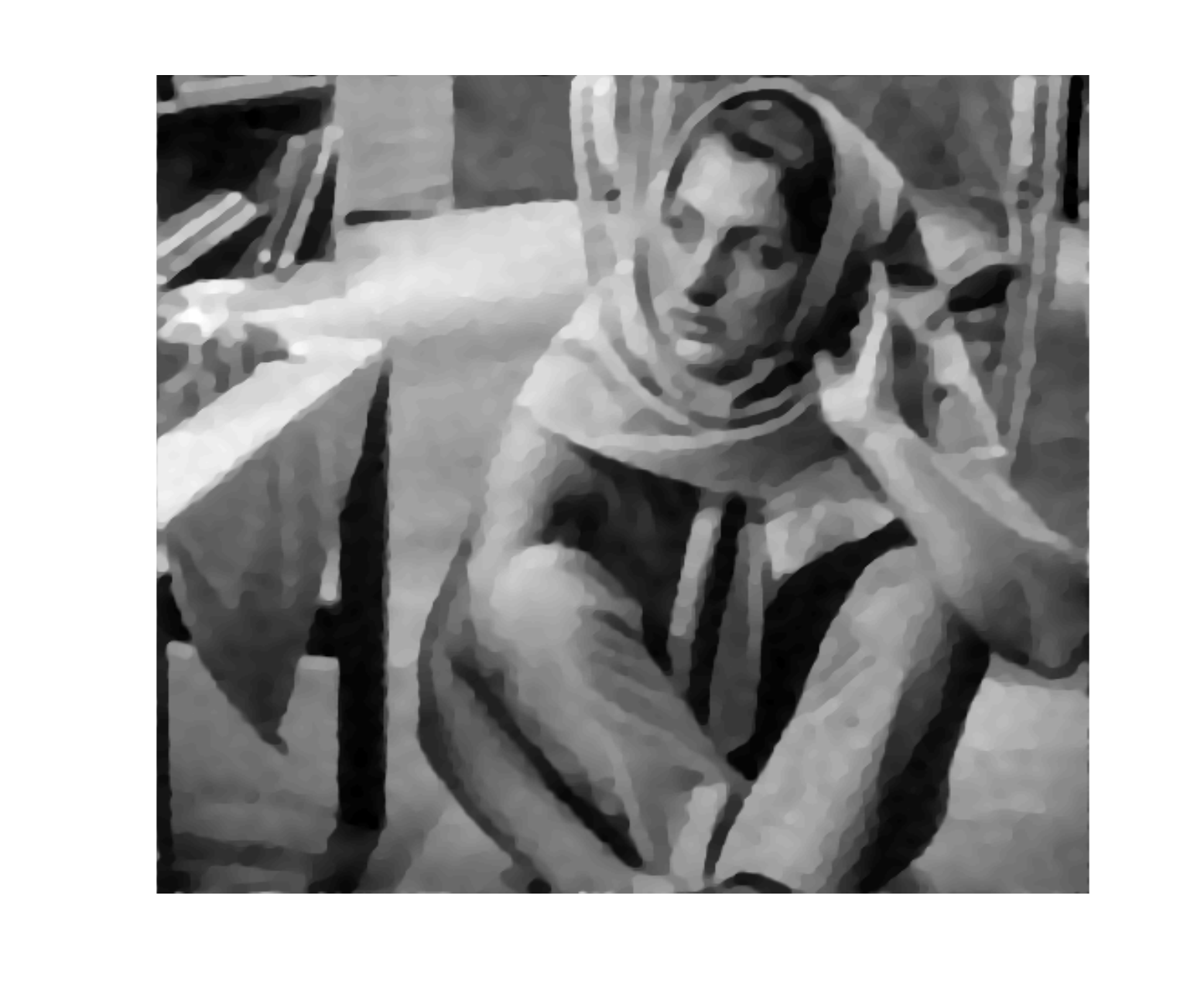}}%
\qquad
\subfloat[][OSGA: $f = 1.4294e+2, \mathrm{PSNR} = 23.77, \mathrm{T} = 5.49$]{\includegraphics[width=6.1cm]{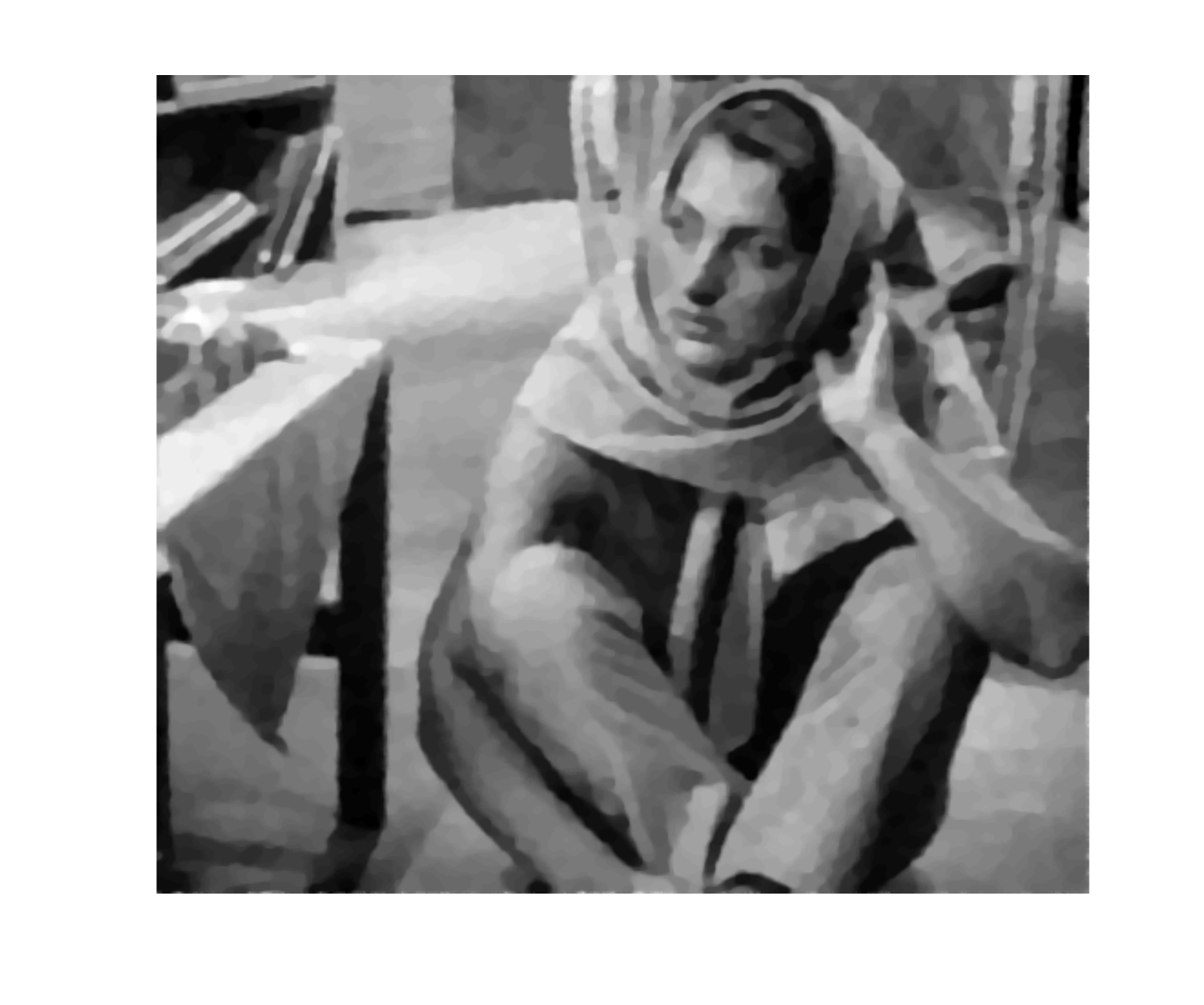}}
\caption{Deblurring of the $512 \times 512$ Barbara image with the $9 \times 9$ uniform blur and the Gaussian noise with deviation $10^{-3/2}$ by PSGA, MFISTA, ADMM, and OSGA with the regularization parameter $\lambda = 4 \times 10^{-3}$. The algorithms were stopped after 50 iterations.}%
\end{figure}

\vspace{5mm}
% ########################################################
\subsubsection{Experiment with $l_1$ isotropic total variation}
In this subsection we study the image restoration from a blurred/noisy observation using 
the model (\ref{e.bcl1}) equipped with the isotropic total variation regularizer. The 
optimization problem is solved by DRPD1, DRPD2 (Douglas-Rachford primal-dual algorithms proposed by {\sc Bo? \& Hendrich} in \cite{BotH}), ADMM, and OSGA. 

Here we consider recovering the $256 \times 256$ blurred/noisy Fingerprint image from a
blurred/noisy image constructed by a $7 \times 7$ Gaussian kernel with 
standard deviation 5 and salt-and-pepper impulsive noise with the level $40 \%$. 
The algorithms are stopped after 50 iterations. Three different regularization parameters 
$\lambda = 3 \times 10^{-1}$, $\lambda = 1 \times 10^{-1}$, and $\lambda = 8 \times 10^{-2}$
are considered. The results are presented in Table 4 and Figures 5 and 6. 

The results of Figure 5
demonstrates that the algorithms are sensitive to the regularization parameter. For example, ADMM get the best function value and an acceptable ISNR compared with the others for $\lambda = 3 \times 10^{-1}$, however, for $\lambda = 1 \times 10^{-1}$ and $\lambda = 8 \times 10^{-2}$, it behaves worse than the others. In the sense of ISNR, DRPD1, DRPD2, and OSGA are competitive, but OSGA get the better results. The resulting images for $\lambda = 8 \times 10^{-2}$ are illustrated in Figure 6, demonstrating that ADMM could not recover the image properly, however, DRPD1, DRPD2, and OSGA reconstruct acceptable approximations, where OSGA attains the best PSNR.

%%%%%%%%%%%%%%%%%%%%
\begin{table}[!htbp]
\caption{Results for the $l_1$ isotropic total variation}
\begin{center}\footnotesize
\renewcommand{\arraystretch}{1.3}
\begin{tabular}{|l|l|l|l|l|l|}\hline
\multicolumn{1}{|l|}{} & \multicolumn{1}{l|}{{\bf $\lambda$}}
&\multicolumn{1}{l|}{{\bf DRPD-1}} & \multicolumn{1}{l|}{{\bf DRPD-2}}
&\multicolumn{1}{l|}{{\bf ADMM}} & \multicolumn{1}{l|}{{\bf OSGA}} \\ 
\hline
{\bf PSNR}  &                    & 17.95 & 17.61 & 18.47 & 18.67\\
{\bf $f_b$} & $3 \times 10^{-1}$ & 1.4581e+4 & 1.4867e+4 & 1.4476e+4 & 1.4564e+4\\
{\bf Time } &                    & 1.04 & 0.67 & 0.81 & 6.11\\
\hline
{\bf PSNR}  &                    & 21.87 & 21.23 & 20.10 & 22.05\\
{\bf $f_b$} & $1 \times 10^{-1}$ & 1.3571e+4 & 1.3635e+4 & 1.3799e+4 & 1.3612e+4\\
{\bf Time } &                    & 1.28 & 0.83 & 0.76 & 6.01\\
\hline
{\bf PSNR}  &                    & 22.26 & 21.56 & 18.67 & 22.46\\
{\bf $f_b$} & $8 \times 10^{-2}$ & 1.3519e+4 & 1.3573e+4 & 1.3879e+4 & 1.3582e+4\\
{\bf Time } &                    & 1.00 & 0.65 & 0.79 & 6.11\\
\hline
\end{tabular}
\end{center}
\end{table}

%%%%%%%%%%%%%%%%%%%%%%%%%%%%%
\begin{figure} \label{f.deb3}
\centering
\subfloat[][$\delta_k$ versus iterations, $\lambda = 2 \times 10^{-1}$]{\includegraphics[width=6.1cm]{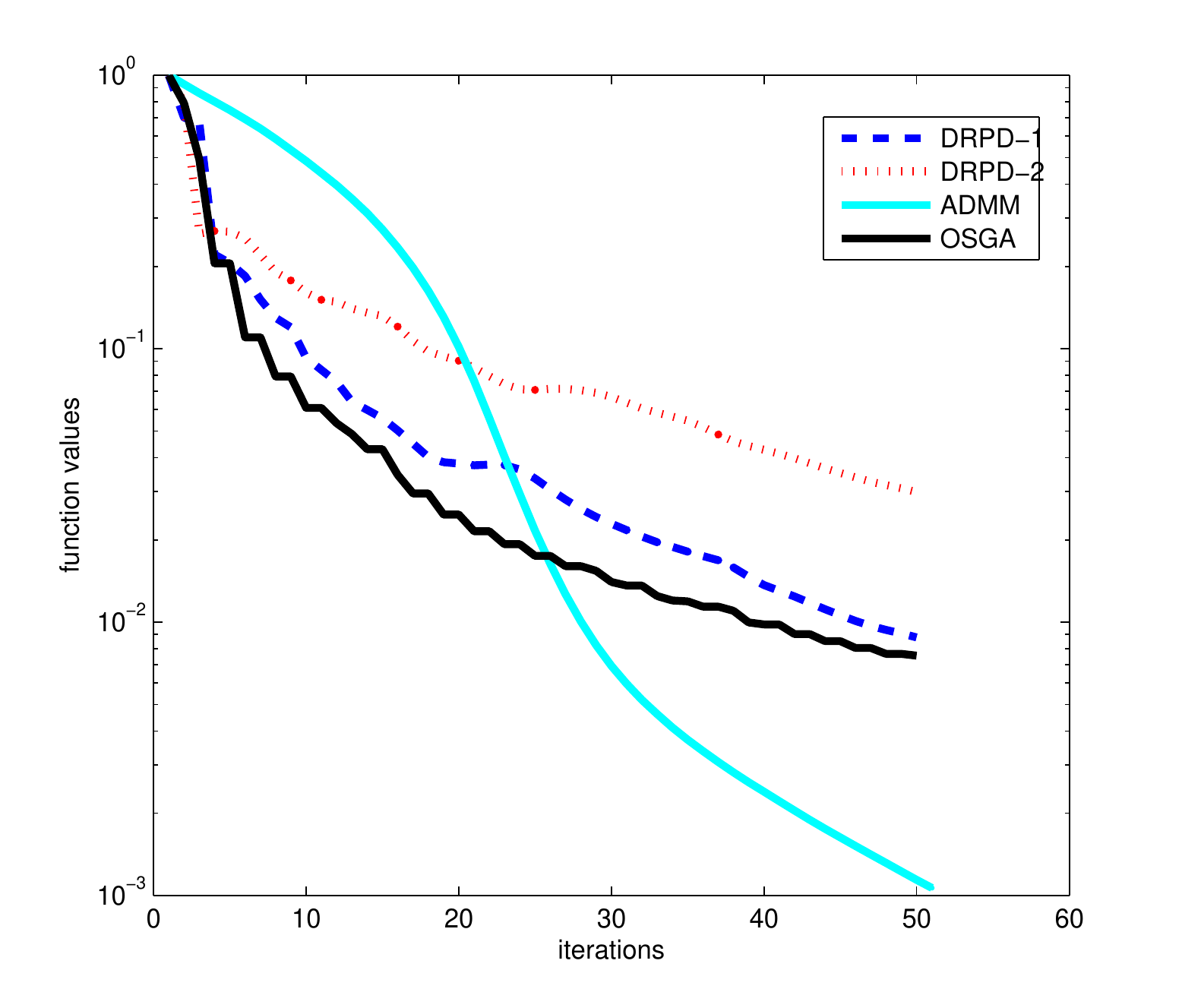}}%
\qquad
\subfloat[][ISNR versus iterations, $\lambda = 2 \times 10^{-1}$]{\includegraphics[width=6.1cm]{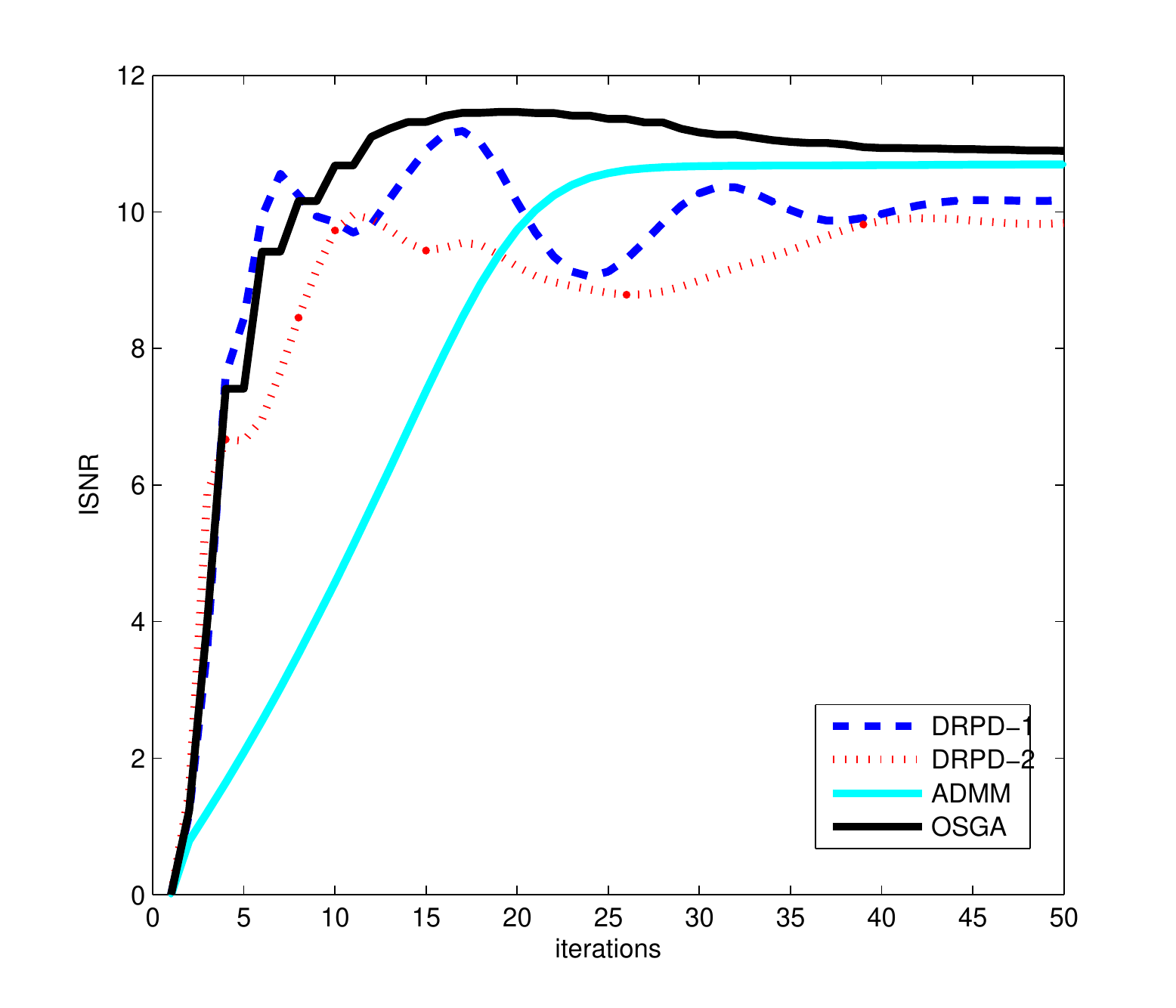}}
\qquad
\subfloat[][$\delta_k$ versus iterations, $\lambda = 8 \times 10^{-2}$]{\includegraphics[width=6.1cm]{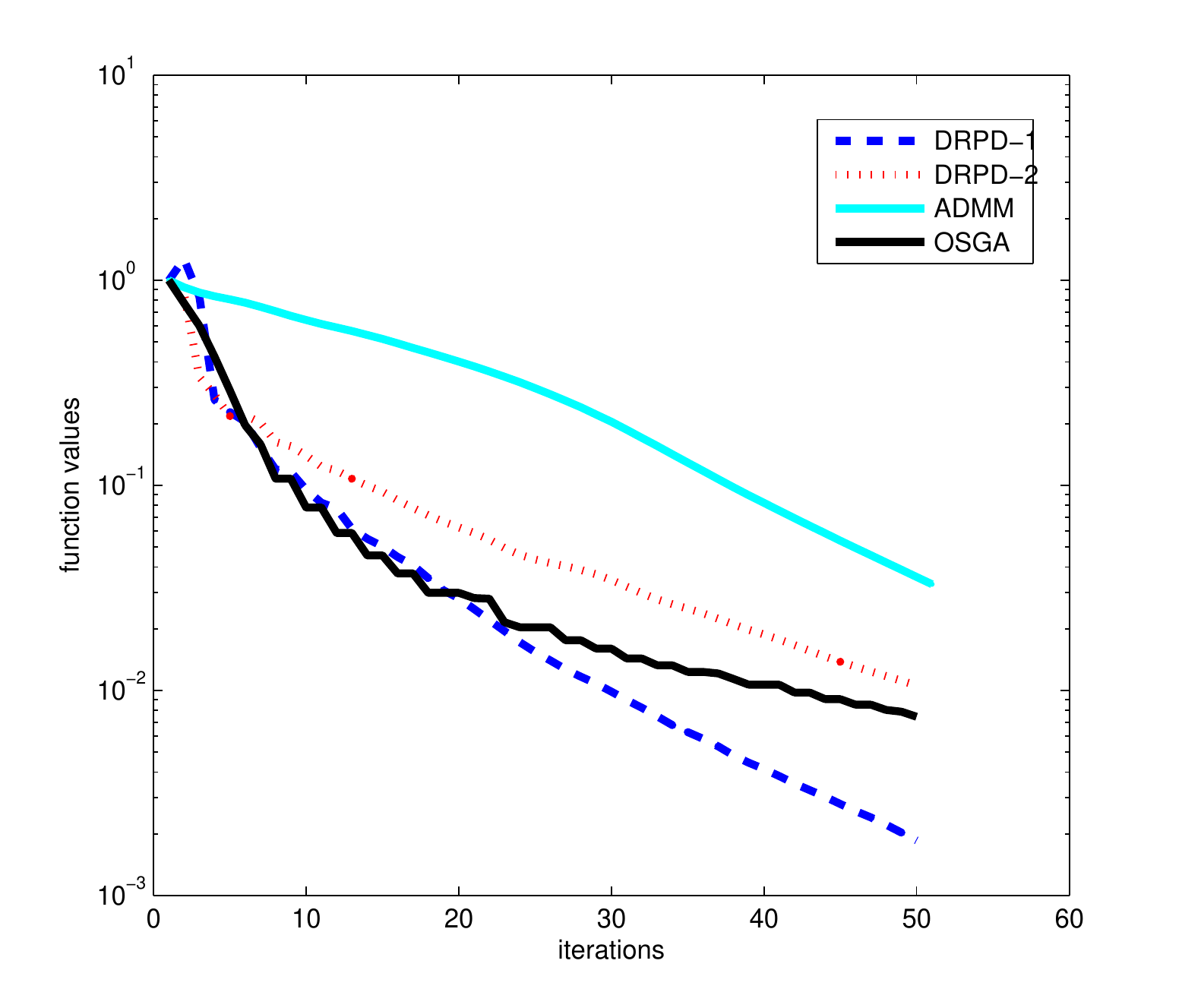}}%
\qquad
\subfloat[][ISNR versus iterations, $\lambda = 8 \times 10^{-2}$]{\includegraphics[width=6.1cm]{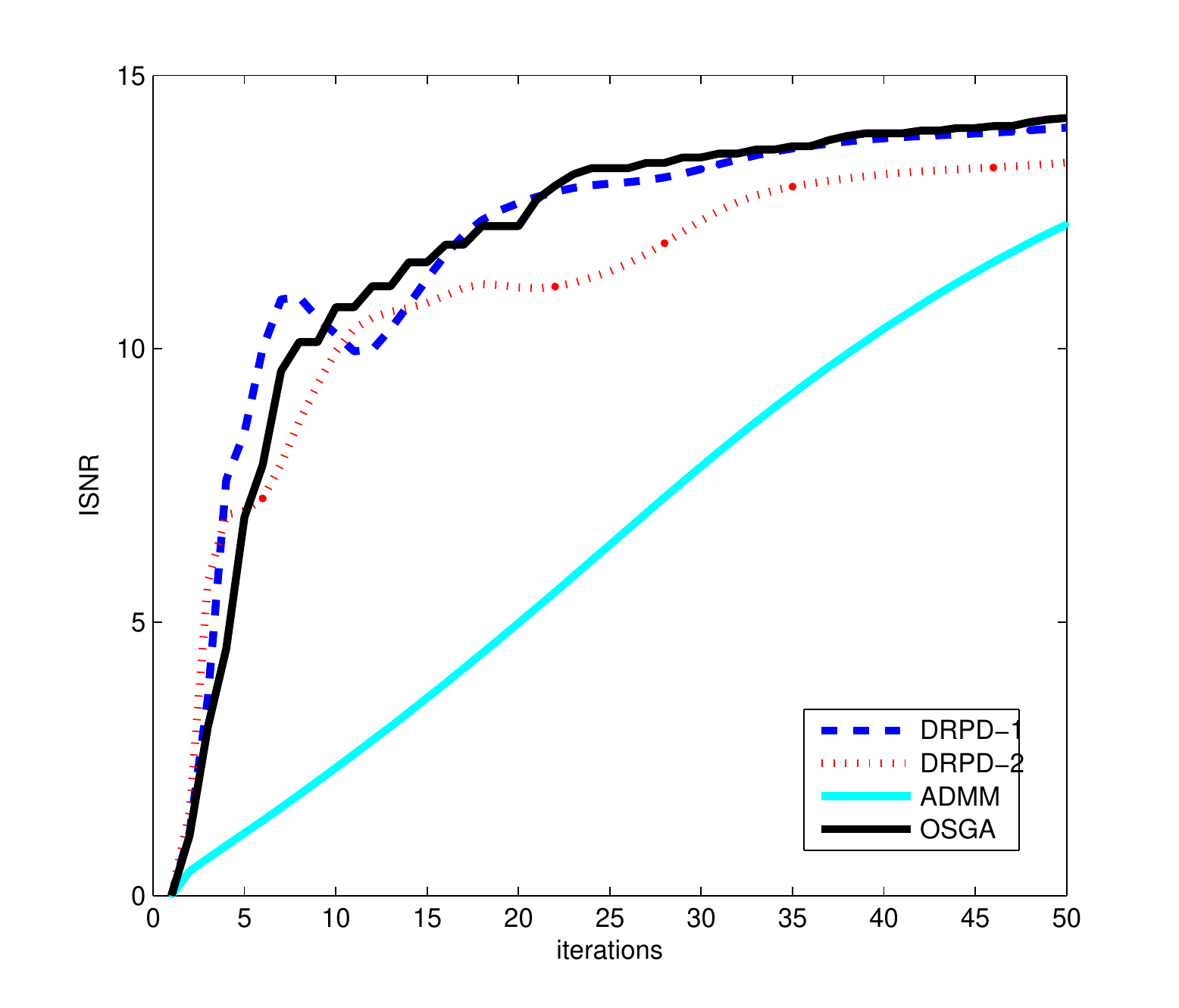}}
\qquad
\subfloat[][$\delta_k$ versus iterations, $\lambda = 6 \times 10^{-2}$]{\includegraphics[width=6.1cm]{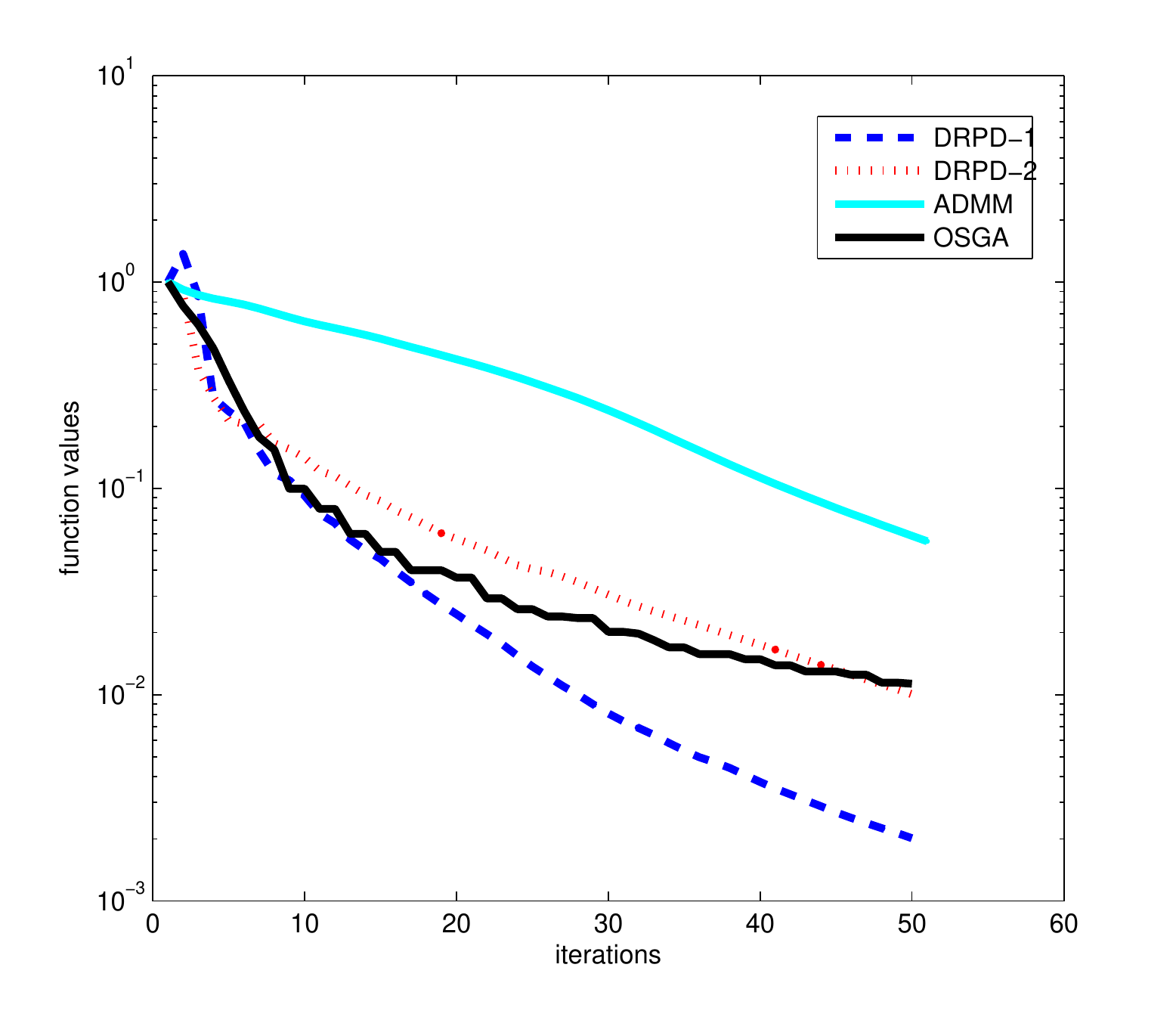}}%
\qquad
\subfloat[][ISNR versus iterations, $\lambda = 6 \times 10^{-2}$]{\includegraphics[width=6.1cm]{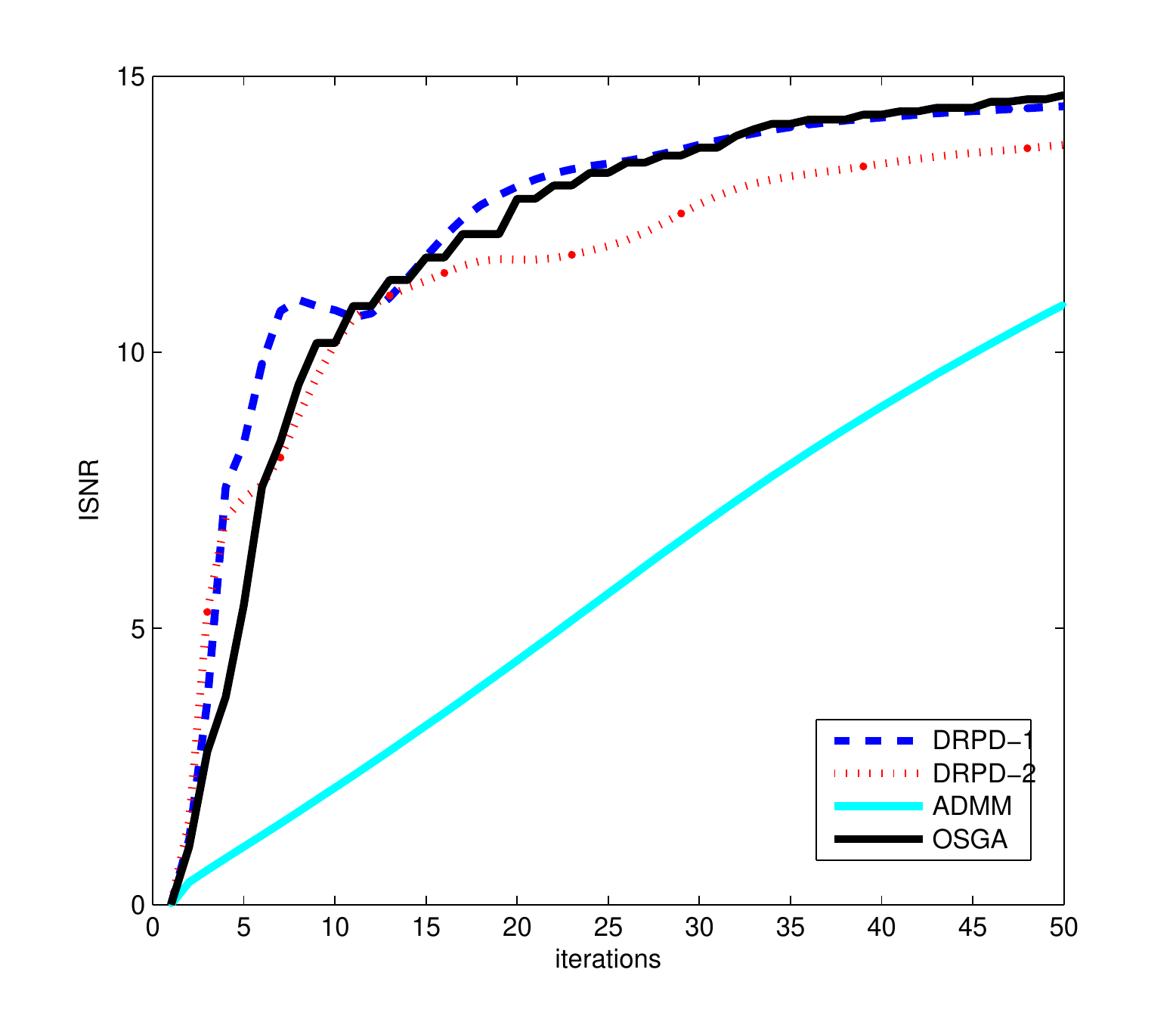}}

\caption{A comparison among DRPD1, DRPD2, ADMM, and OSGA for deblurring the $256 \times 256$ Fingerprint image with the various regularization parameter $\lambda$. The blurred/noisy image was constructed by the $7 \times 7$ Gaussian kernel with standard deviation 5 and salt-and-pepper impulsive noise with the level $50 \%$. The algorithms were stopped after 50 iterations. Subfigures (a), (c), and (e) display the relative error $\delta_k$ (\ref{e.delta}) of function values versus iterations, and (b), (d), and (f) show ISNR (\ref{e.isnr}) versus iterations.}
\end{figure}

%%%%%%%%%%%%%%%%%%%%%%%%%%%%%
\begin{figure} \label{f.deb4}
\centering
\subfloat[][Original image]{\includegraphics[width=6.1cm]{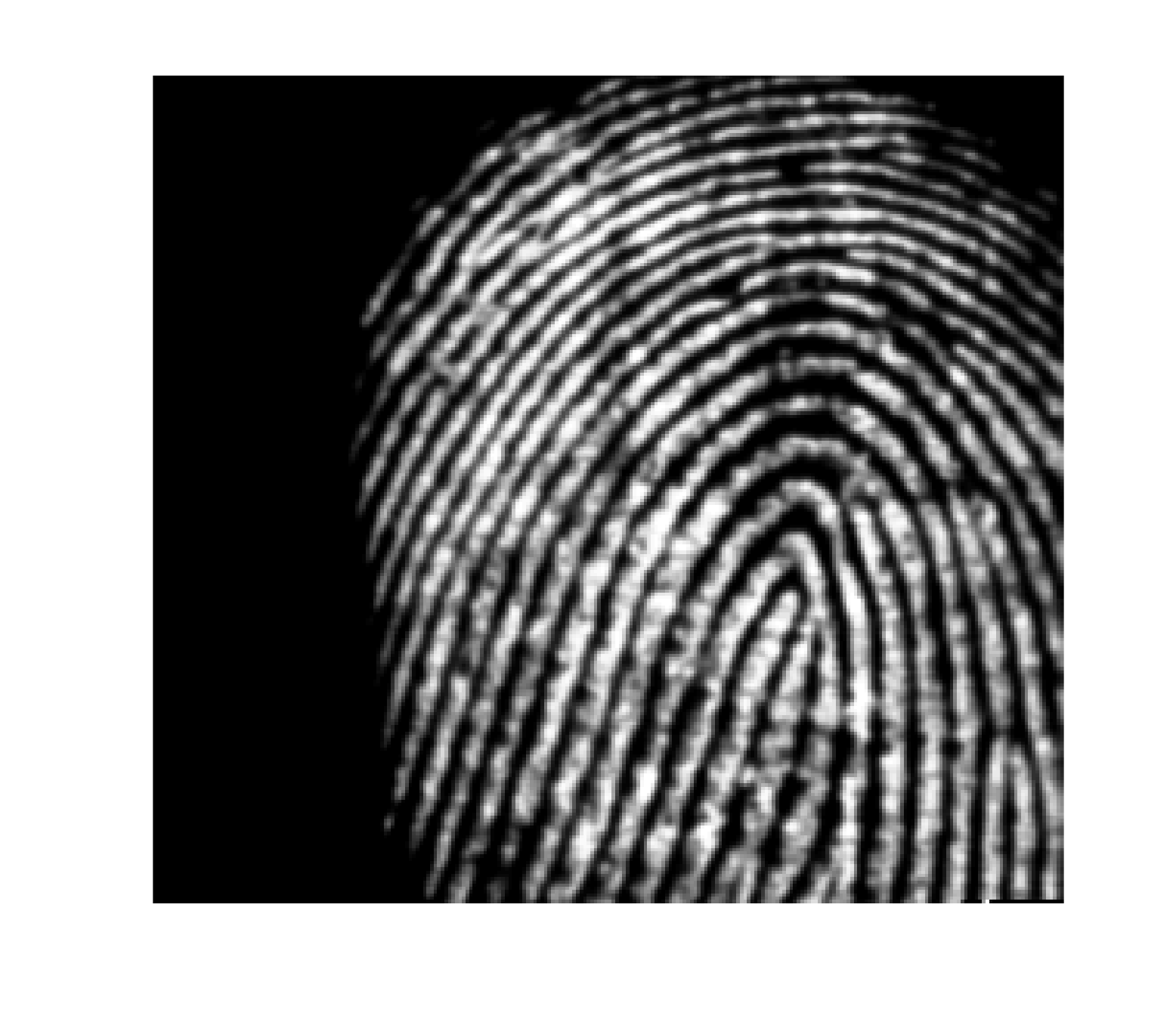}}%
\qquad
\subfloat[][Blurred/noisy image]{\includegraphics[width=6.1cm]{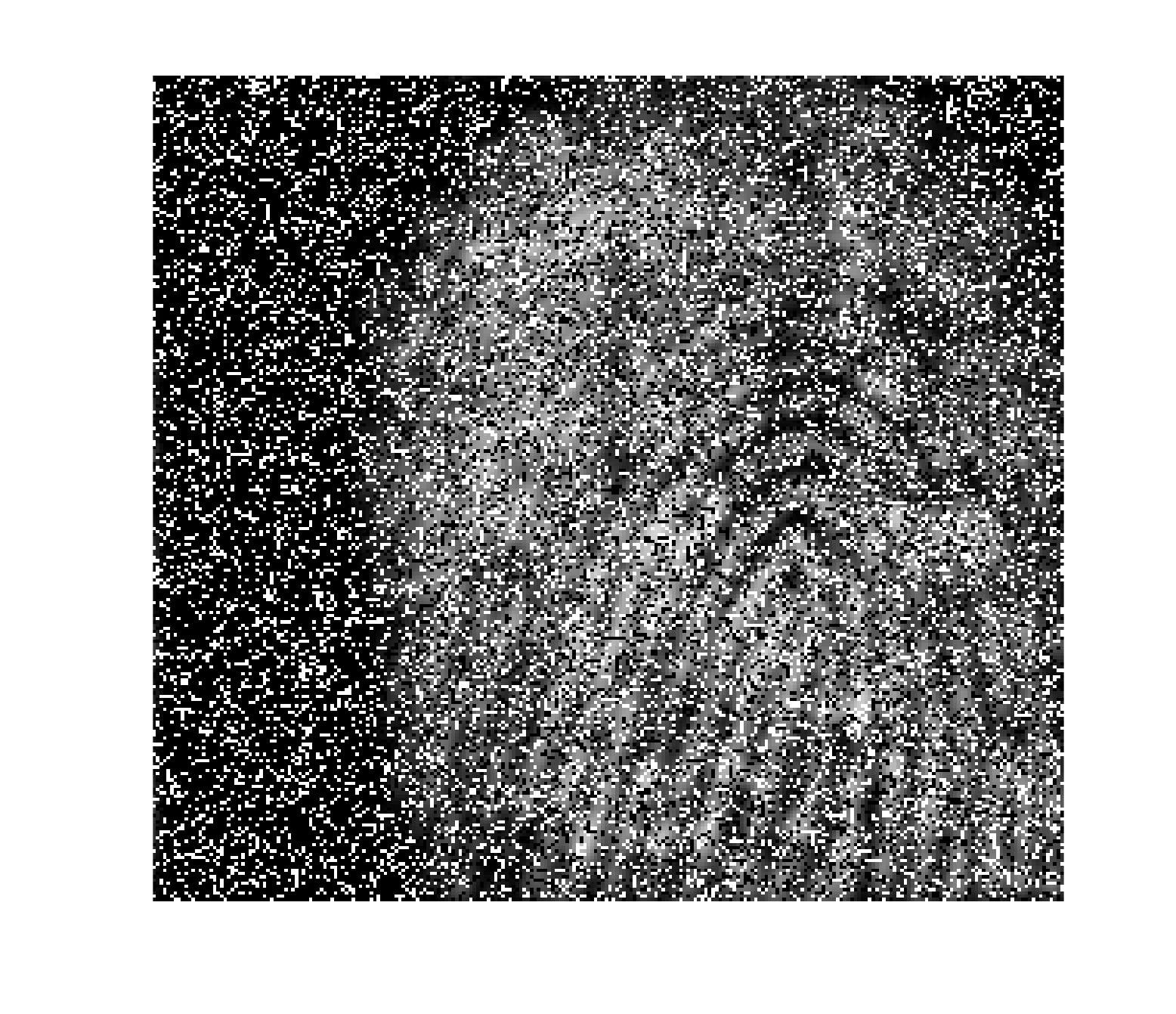}}
\qquad
\subfloat[][DRPD-1: $f = 1.3571e+4, \mathrm{PSNR} = 21.87, \mathrm{T} = 1.28$]{\includegraphics[width=6.1cm]{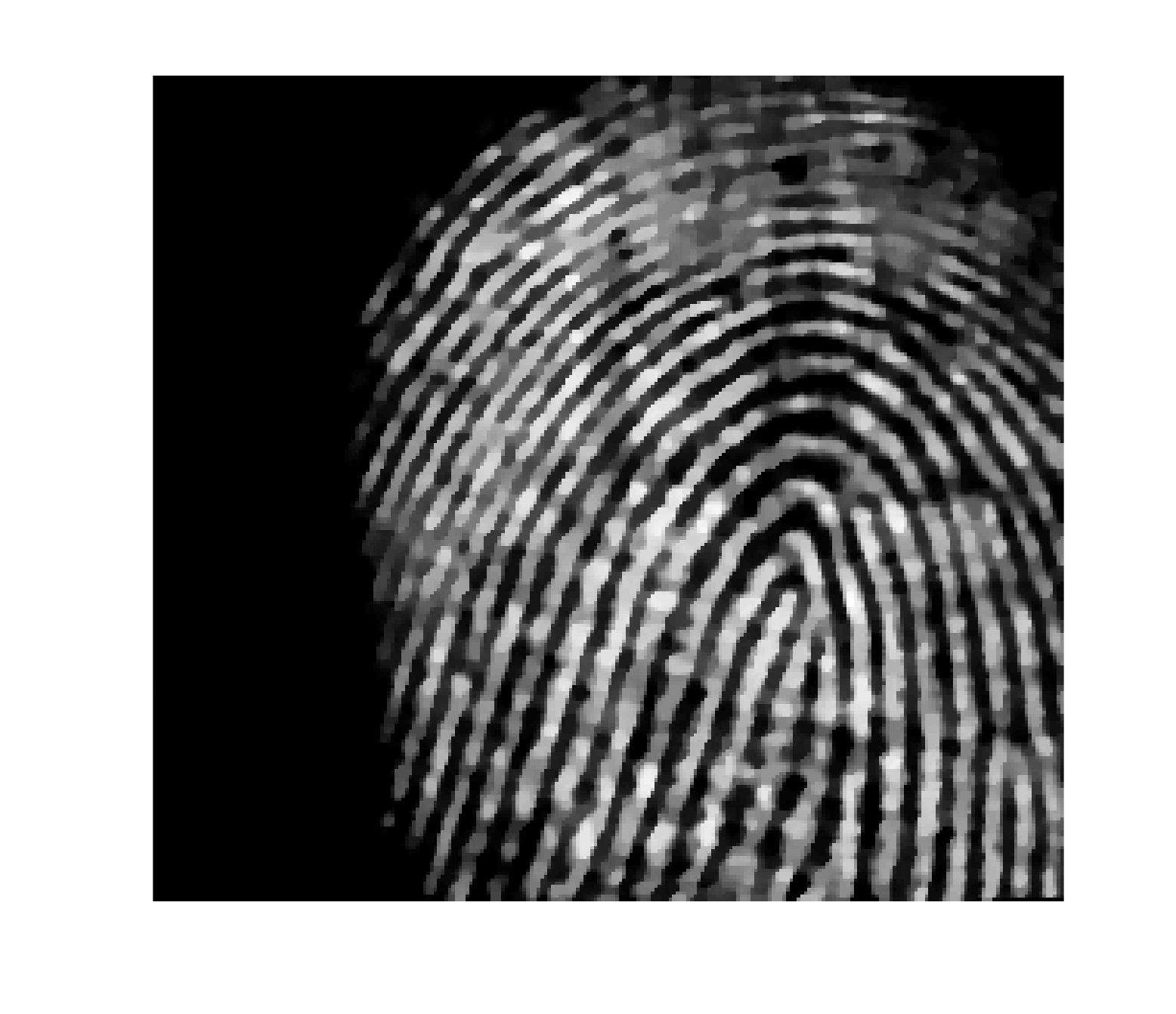}}%
\qquad
\subfloat[][DRPD-2: $f = 1.3635e+4, \mathrm{PSNR} = 21.23, \mathrm{T} = 0.83$]{\includegraphics[width=6.1cm]{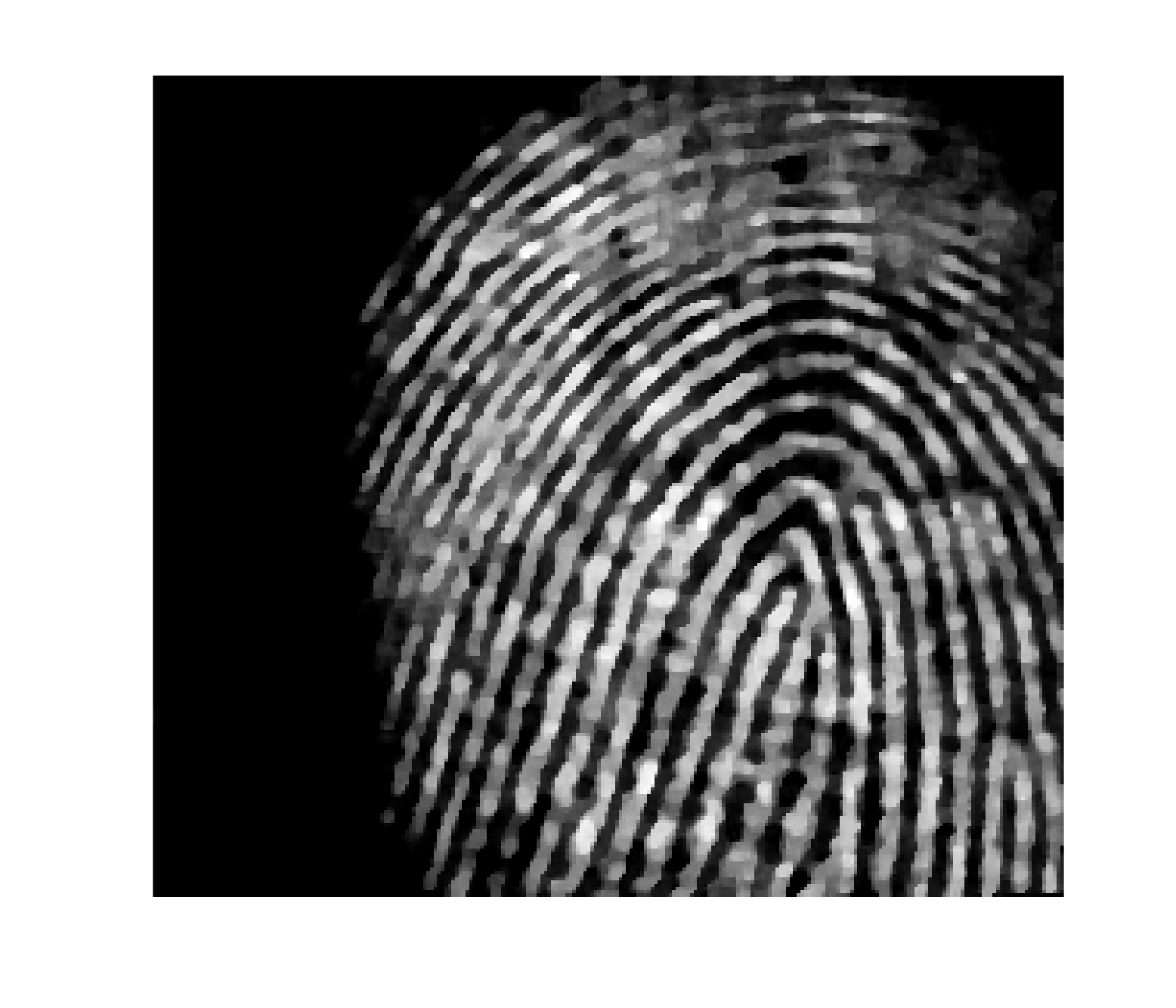}}
\qquad
\subfloat[][ADMM: $f = 1.3799e+4, \mathrm{PSNR} = 20.10, \mathrm{T} = 0.76$]{\includegraphics[width=6.1cm]{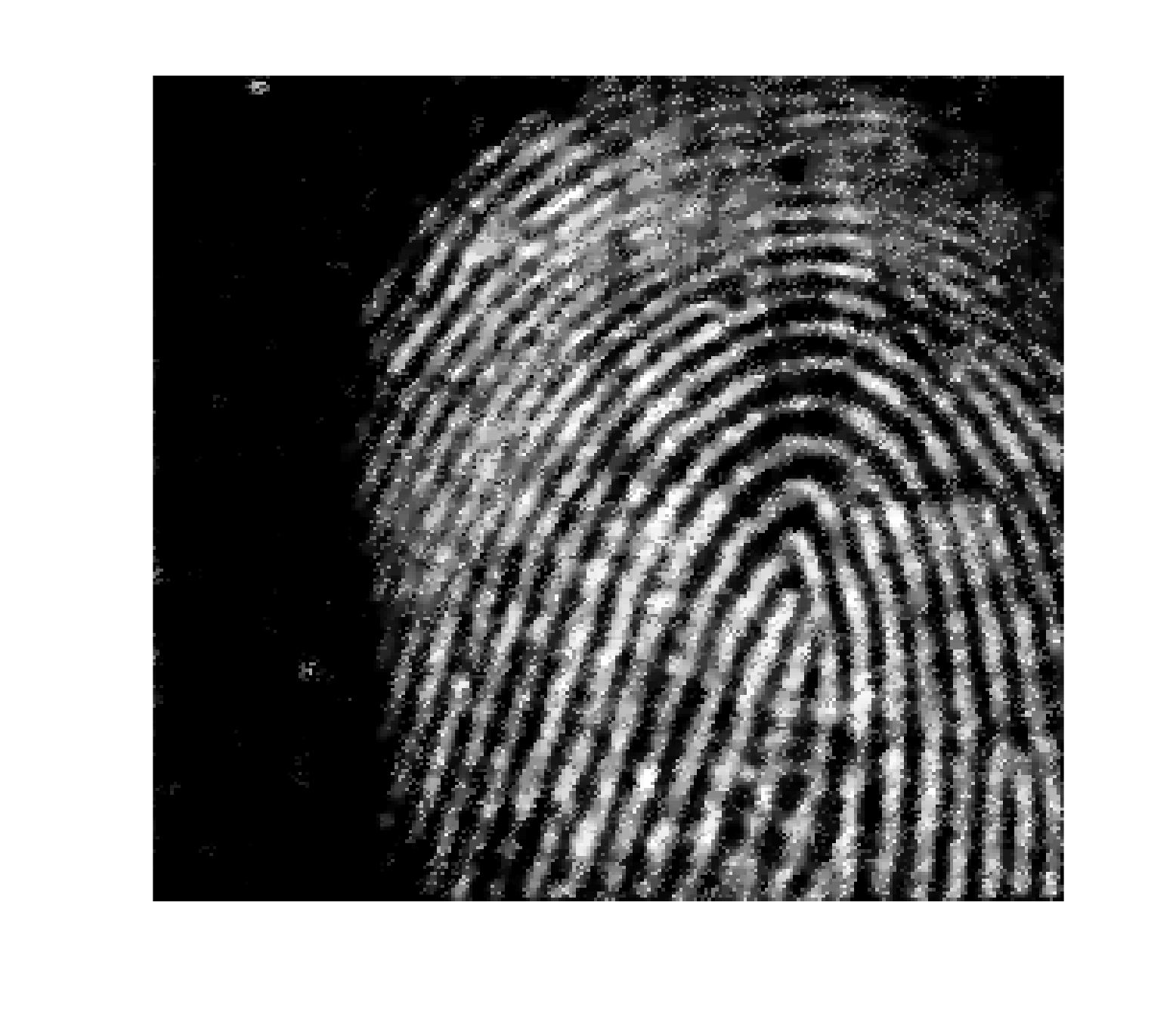}}%
\qquad
\subfloat[][OSGA: $f = 1.3612e+4, \mathrm{PSNR} = 22.04, \mathrm{T} = 6.01$]{\includegraphics[width=6.1cm]{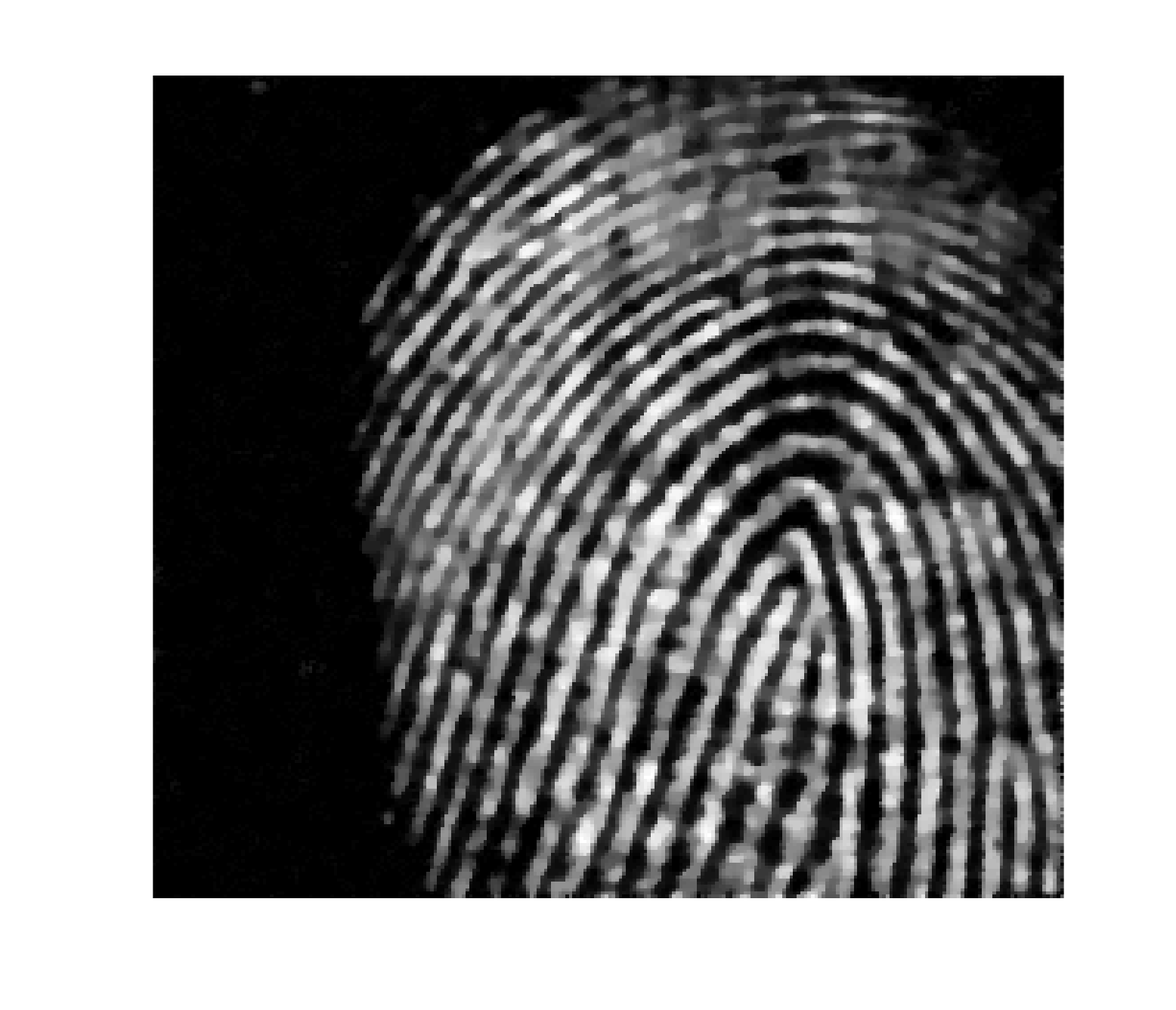}}
\caption{Deblurring of the $256 \times 256$ Fingerprint image using DRPD1, DRPD2, ADMM, and OSGA with the regularization parameter $\lambda = 7 \times 10^{-3}$. The algorithms were stopped after 50 iterations. The blurred/noisy image was constructed by the $7 \times 7$ Gaussian kernel with standard deviation 5 and salt-and-pepper impulsive noise with the level $50 \%$.}%
\end{figure}

\vspace{5mm}
% ########################################################
% ########################################################
\section{Concluding remarks}
This paper addresses an optimal subgradient algorithm, OSGA,  for solving bound-constrained convex optimization. It is shown that the solution of OSGA's subproblem has a piecewise linear form in a single dimension. Afterwards, we give two iterative schemes to solve this one-dimensional problem, where the first one solves OSGA's subproblem exactly and the second one inexactly. In particular, the first scheme uses the piecewise linear solution to translate the subproblem into a one-dimensional piecewise rational problem. Subsequently, the optimizer of the subproblem is founded in each interval and compared to give the global solution. The second scheme substitutes the piecewise linear solution into the subproblem and give a one-dimensional nonlinear equation that can be solved with  zero finders to give the optimizer. Numerical results are reported showing the efficiency of OSGA compared with some state-of-the-art algorithms. \\\\
% ########################################################
{\bf Acknowledgement.} We would like to thank {\sc Radu Bot} and {\sc Min Tao} for making their codes DRPD-1, DRPD-2, and ADMM available for us.

% ########################################################
% ########################################################

\end{document}